\numberwithin{equation}{section}
\newtheorem{Thm}{Theorem}[section]
\newtheorem{Prop}[Thm]{Proposition}
\newtheorem{Lem}[Thm]{Lemma}
\newtheorem{Cor}[Thm]{Corollary}
\newtheorem{Rem}[Thm]{Remark}
\newtheorem{Rems}[Thm]{Remarks}
\newtheorem{Ex}[Thm]{Example}
\newtheorem{Notation}[Thm]{Notation}
\newtheorem{Properties}[Thm]{Properties}
\newcommand{\cF}{({\cal{F}}_t)}
\newcommand{\cG}{({\cal{G}}_t)}
\newcommand{\bbP}{\mathbb P}
\newcommand{\bbR}{\mathbb R}
\newcommand{\bbE}{\mathbb E}
\newcommand{\bbV}{\mathbb V}
\newcommand{\bbD}{\mathbb D}
\newcommand{\bbL}{{\mathbb L}}
\newcommand{\bbS}{{\mathbb S}}
\newcommand{\bI}{{\bf I}}
\newcommand{\bD}{{\bf D}}
\newcommand{\bddL}{{\bf b}{\mathbb L}}
\newcommand{\bddP}{{\bf b}{\mathcal{P}}}
\newcommand{\HH}{{\cal H}^2}
\newcommand{\ttimes}{\hspace{-0.6mm}\times\hspace{-0.6mm}}
\newcommand{\synch}{\sim_{\textrm{synch}}}
\newcommand{\DtoE}{$[D\longmapsto E]$}
\newcommand{\EtoD}{$[D$ \reflectbox{$\longmapsto$} $E]$} 
\newcommand{\StoT}{$[S\longmapsto T]$}
\newcommand{\TtoS}{$[S$ \reflectbox{$\longmapsto$} $T]$} 
\newcommand{\DttoE}{$[[D\longmapsto E]]$}
\newcommand{\EttoD}{$[[D$ \reflectbox{$\longmapsto$} $E]]$} 
\newcommand{\SttoT}{$[[S\longmapsto T]]$}
\newcommand{\TttoS}{$[[S$ \reflectbox{$\longmapsto$} $T]]$} 
\newcommand{\DEF}{\mathrel{\mathrel{\mathop:}=}}
\def\section{\@startsection {section}{1}{\z@}{-3.5ex plus -1ex minus 
-.2ex}{2.3ex plus .2ex}{\normalsize\bf}}
\begin{document}

\title{\textbf{\large Stochastic Calculus for a Time-changed Semimartingale and the Associated Stochastic Differential Equations}}
\author{\textsc{Kei Kobayashi}\thanks{Department of Mathematics, Tufts University, 503 Boston Avenue, Medford, MA 02155, USA. \textit{Email:} kei.kobayashi@tufts.edu}
        \vspace{1mm} \\ \textit{\normalsize Tufts University}}
\date{ }
\maketitle                                                       
\vspace{-2mm}
\renewcommand{\thefootnote}{\fnsymbol{footnote}}

\begin{abstract}
     It is shown that under a certain condition on a semimartingale and a time-change, 
     any stochastic integral driven by the time-changed semimartingale is a time-changed 
     stochastic integral driven by the original semimartingale.   
     As a direct consequence, a specialized form of the It\^o formula is derived. 
     When a standard Brownian motion is the original semimartingale, 
     classical It\^o stochastic differential equations driven by the Brownian motion with drift 
     extend to a larger class of stochastic differential equations involving a time-change with continuous paths.  
     A form of the general solution of linear equations in this new class is established, followed by consideration of some examples 
     analogous to the classical equations.  
     Through these examples, each coefficient of the stochastic differential equations in the new class is given meaning. 
     The new feature is the coexistence of a usual drift term along with
     a term related to the time-change.\footnote[0]{\textit{AMS 2000 subject classifications:} Primary 60H05, 60H10; secondary 35S10. 
     \textit{Keywords:} time-change, semimartingale, stochastic calculus, 
                       stochastic differential equation, time-changed Brownian motion.}
\end{abstract}

\section{Introduction} \label{sec introduction}

     Among the most important results in the theory of stochastic integration is 
     the celebrated It\^o formula, which establishes a stochastic calculus for stochastic integrals driven by a semimartingale. 
     In general, given a $d$-dimensional semimartingale $X=(X^1,\ldots ,X^d)$ starting at $0$, 
     if $f:\bbR^d \longrightarrow \bbR$ is a $C^2$ function, 
     then $f(X)$ is a one-dimensional semimartingale, and, for all $t\geq 0$, with probability one
     \begin{align}  
          f(X_t)-f(0)\label{ItoFormula1} 
           &=\sum_{i=1}^d \int_0^t \dfrac{\partial f}{\partial x^i}(X_{s-}) dX_s^{i}	
             + \dfrac 12 \sum_{i,j=1}^d \int_0^t \dfrac{\partial^2 f}{\partial x^i \partial x^j}(X_{s-}) d[X^i,X^j]_s^c \\
           & \ \ \ +\sum_{0<s\leq t}\Bigl\{ f(X_s)-f(X_{s-})-\sum_{i=1}^d \dfrac{\partial f}{\partial x^i}(X_{s-})
              \Delta X_s^i \Bigr\}. \notag
     \end{align}
     One useful implication of the It\^o formula \eqref{ItoFormula1} is the product rule.  Namely, if $Y$ and $Z$ are both one-dimensional 
     semimartingales starting at $0$, then, for all $t\geq 0$, with probability one
     \begin{align}  
          Y_tZ_t=\int_0^t Y_{s-} dZ_s+\int_0^t Z_{s-} dY_s + [Y,Z]_t.  \label{ItoFormula2}
     \end{align} 
     These formulas are indispensable tools for working with stochastic differential equations (SDEs).  
     
     Our motivation to investigate stochastic integrals driven by a time-changed semimartingale originated in a desire 
     to develop a stochastic calculus when the time-change is the 
     first hitting time process of a stable subordinator of index between $0$ and $1$.
     Meerschaert and Scheffler~\cite{M-S_1,M-S_2} show that 
     this type of process arises as the scaling limit of continuous time random walks. 
     If the original semimartingale is a standard Brownian motion, then it is known that 
     the transition probability density of the time-changed Brownian motion satisfies a time-fractional partial differential equation (PDE). 
     However, a general PDE satisfied by the transition probability density of a solution to an SDE 
     which includes a term driven by the time-changed Brownian motion has not been completely revealed. 
     The stochastic calculus developed in this paper gives a way to deal with this problem.         
  
     Section \ref{sec preliminaries} first introduces the significant concept of synchronization, 
     which connects a semimartingale with a time-change in a certain manner.  
     A time-change $(T_t)$ is a c\`adl\`ag, nondecreasing family of stopping times. 
     Given a one-dimensional semimartingale $Z$ starting at $0$, 
     the composition of $Z$ and $T$, denoted $Z\circ T$ or $(Z_{T_t})$, is called the time-changed semimartingale.
     We occasionally refer to $t$ and $T_t$ as the original clock and the new clock, respectively. 
     With the notion of synchronization, Jacod~\cite{Jacod} explains how to recognize a time-changed stochastic integral 
     of the form $\int_0^{T_t} H_s dZ_s$
     in terms of an integral with respect to the time-changed semimartingale $(Z_{T_t})$ (Lemma \ref{Lem JACOD2}).
     However, this statement does not answer the following question: 
     \begin{itemize}
         \item[Q:] When and how can a stochastic integral $\int_0^t K_s dZ_{T_s}$ 
               driven by a time-changed semimartingale be realized by way of
               an integral driven by the original semimartingale $(Z_t)$?
     \end{itemize}
     In Section \ref{sec results}, Theorem \ref{Thm COV} provides a complete answer to the above question.  
     Namely, $\int_0^t K_s dZ_{T_s}=\int_0^{T_t} K_{S(s-)} dZ_s$, where $S$ is the first hitting time process of $T$. 
     An important corollary of Theorem \ref{Thm COV} is a form of the It\^o formula \eqref{ItoFormula1} 
     for a $C^2$ function of a process which contains a stochastic integral driven by a time-changed semimartingale $(Z_{E_t})$ 
     where $(E_t)$ is a continuous time-change, meaning a time-change with continuous paths.  
     The formula can be reexpressed in terms of usual stochastic integrals driven by 
     the original semimartingale 
     and the continuous part of the semimartingale's quadratic variation. 
     A generalization of this formula is a time-changed It\^o formula provided in Theorem \ref{Thm ITO}.

     Theorem \ref{Thm COV}, from which the time-changed It\^o formula is derived, 
     can be regarded as a powerful tool 
     in handling a new class of SDEs which are driven by 
     Lebesgue measure, a continuous time-change, and a time-changed semimartingale (Section \ref{sec LSDE}). 
     The simplest, yet quite significant subclass, of such SDEs are ones with linear coefficients: 
     \begin{align}
          dX_t&=\bigl(\rho_1(t,E_t)+\rho_2(t,E_t)X_t\bigr)dt +\bigl(\mu_1(t,E_t)+\mu_2(t,E_t)X_t\bigr)dE_t \ \ \ \ \ \label{INTRO1} \\
                              & \ \ \ \ +\bigl(\sigma_1(t,E_t)+\sigma_2(t,E_t)X_t\bigr)dB_{E_t},  \notag
     \end{align}
     where $B$ is a standard Brownian motion.  
     The new feature of this class of SDEs is the coexistence of a term representing a drift under the new clock $E_t$ 
     along with a usual drift based on the original clock $t$.
     Theorem \ref{Thm LSDE-3} establishes a general form of the solution to SDE \eqref{INTRO1}, in which again 
     Theorem \ref{Thm COV} is applied to obtain another representation of the solution.    
     
     Section \ref{sec example} compares some SDEs of the form \eqref{INTRO1} with classical It\^o SDEs, described as
     \begin{align}
          dY_t=\bigl(b_1(t)+b_2(t)Y_t\bigr)dt+\bigl(\tau_1(t)+\tau_2(t)Y_t\bigr)dB_t. \label{INTRO2}
     \end{align}  
     The comparison reveals the role of the $dE_t$ term appearing in SDE \eqref{INTRO1}. 
     Namely, $\mu_j$ can be ascribed to either $b_j$ or $\tau_j$ in \eqref{INTRO2}, 
     depending on the way the model \eqref{INTRO1} is constructed (Remark \ref{Rems LSDE-solution} (b)).  
     These examples also illustrate methods for obtaining statistical data of the solution, such as the mean and variance.

\section{Preliminaries --- Stochastic Integrals and Time-changes} \label{sec preliminaries} 
     Throughout this paper, a complete filtered probability space $(\Omega, {{\cal F}}, \cF, \bbP)$ is fixed, 
     where the filtration $\cF$ satisfies the \textit{usual conditions}; 
     that is, it is right-continuous and contains all the $\bbP$-null sets in $\cal F$.  
     For simplicity, unless mentioned otherwise, \emph{all processes
     are assumed to take values in $\bbR$ and start at $0$}. 

     A process $Z$ is said to be \textit{c\`adl\`ag} (resp.~\textit{c\`agl\`ad}) if $Z$ has right-continuous sample paths with left limits 
     (resp.~left-continuous sample paths with right limits).  
     The assumption that $Z$ is c\`adl\`ag or c\`agl\`ad requires the process to have at most countably many jumps.   
     Associated to a c\`adl\`ag process $Z$ is its jump process $\Delta Z=(\Delta Z_t)$ where $\Delta Z_t\DEF  Z_t-Z_{t-}$ 
     with $Z_{t-}$ denoting the left limit at $t$ and $Z_{0-}=0$ by convention.   
     Let $\bbD\cF$ and $\bbL\cF$ respectively denote the class of c\`adl\`ag, $\cF$-adapted processes 
     and that of c\`agl\`ad, $\cF$-adapted processes.
     
     A c\`adl\`ag process $Z$ is an $\cF$-\textit{semimartingale} if there exist an $\cF$-local martingale $M$ and 
     an $\cF$-adapted process $A$ of finite variation on compact sets such that $Z=M+A$.  
     Although this decomposition is not unique in general, the local martingale part $M$ can be uniquely decomposed 
     as $M=M^c+M^d$ with a continuous local martingale $M^c$ and a purely discontinuous local martingale $M^d$.  
     The process $M^c$ is determined independently of the initial decomposition of $Z$ into $M$ and $A$, and 
     $Z^c$ is defined to be the unique continuous local martingale part $M^c$ of $Z$; i.e., $Z^c\DEF M^c$  
     (\cite[I.\ Prop.\ 4.27]{J-S}).
     
     The class of semimartingales forms a real vector space which is closed under multiplication.  
     It is known to be the largest class of processes for which the It\^o-type stochastic integrals are defined.  
     Let $\mathcal{P}\cF$ be the smallest $\sigma$-algebra on $\bbR_+ \ttimes \Omega$ 
     which makes all processes in $\bbL\cF$ measurable.  
     Given an $\cF$-semimartingale $Z$, ${L(Z,{\cal F}_t)}$ denotes the class of ${\cal P}\cF$-measurable, 
     or \textit{$\cF$-predictable} processes $H$ for which 
     a stochastic integral driven by $Z$, denoted  
     $ (H\bullet Z)_t=\int_0^t H_s dZ_s$, can be constructed.      
     A brief summary of the construction appears in Appendix.   
      
     The \textit{quadratic variation} of a semimartingale $Z$, denoted $[Z,Z]$, can be defined by way of a stochastic integral.  
     It is the c\`adl\`ag, $\cF$-adapted, nondecreasing process given by 
     \begin{align}
          [Z,Z]_t\DEF Z_t^2-2\int_0^t Z_{s-}dZ_s. \label{QUAD} 
     \end{align}
     By polarization, the map $[\cdot,\cdot]$ becomes a symmetric, bilinear form on the class of semimartingales.  
     For semimartingales $Y$ and $Z$, note that $[Y,Z]^c$ does not denote its continuous martingale part, 
     which is of course zero, but it is defined to be its continuous part; namely,  
     \begin{align*}
          [Y,Z]_t^c\DEF [Y,Z]_t-\sum_{0<s\leq t}\Delta[Y,Z]_s =[Y,Z]_t-\sum_{0<s\leq t}\Delta Y_s\cdot \Delta Z_s.
     \end{align*}
     It follows by comparing this definition with Theorem 4.52 in \cite[Chap. I]{J-S} that $[Y,Z]^c=[Y^c,Z^c]$.      
     
     The following are some of the basic but key properties of stochastic integrals which will be employed in the subsequent sections.  

\begin{Properties}\label{Properties} \par
     Let $Y$ and $Z$ be $\cF$-semimartingales.  Let $H\in {L(Z,{\cal F}_t)}$.  
     \begin{enumerate}[(1)]
          \item $H\bullet Z$ is again an $\cF$-semimartingale.   \vspace{-4pt} \label{*1}
          \item $\Delta(H\bullet Z)=H\cdot \Delta Z$. \vspace{-4pt} \label{*2}
          \item Additionally, if $H\in L(Y,{{\cal F}_t})$, then $H\bullet (Z+Y)=H\bullet Z+H\bullet Y$. \vspace{-4pt}\label{*3}
          \item If $J\in L(H\bullet Z,{{\cal F}_t})$, 
                                   then $J\bullet (H\bullet Z)=(J\cdot H)\bullet Z$. \vspace{-4pt}\label{*4}
          \item If $K \in L(Y,{{\cal F}_t})$, 
                                   then $[H\bullet Z, K\bullet Y]=(H\cdot K)\bullet [Z,Y]$.\vspace{-4pt}\label{*5}
      \end{enumerate}
\end{Properties} 

     An $\cF$-\textit{time-change} is a c\`adl\`ag, nondecreasing family of $\cF$-stopping times.  
     It is said to be \textit{finite} if each stopping time is finite almost surely. 
     Let $(T_t)$ be a finite $\cF$-time-change and define a new filtration $\cG$ by ${\cal{G}}_t={\cal{F}}_{T_t}$.  
     Then $\cG$ also satisfies the usual conditions since the right-continuity of $\cF$ and $(T_t)$ implies that of $\cG$. 
     In addition, for any $\cF$-adapted process $Z$, the time-changed process $(Z_{T_t})$ is known to be $\cG$-adapted.  
     In fact, more can be said.  

\begin{Lem}\label{Lem JACOD1}\par \hspace{-2mm} 
     \textbf{\emph{(\cite[Cor.\ 10.12]{Jacod})}} \ 
     Let $Z$ be an $\cF$-semimartingale.  Let $(T_t)$ be a finite $\cF$-time-change.
     Then $(Z_{T_t})$ is a $\cG$-semimartingale where ${\cal G}_t\DEF {\cal F}_{T_t}$.  
\end{Lem}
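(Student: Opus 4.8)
\emph{Proof strategy.} The plan is to reduce to the canonical pieces of $Z$ and then run a localization argument in the time-changed filtration $\cG$. Decompose $Z=M+A$ with $M$ an $\cF$-local martingale and $A$ an $\cF$-adapted process of finite variation on compacts, both starting at $0$. Because $(T_t)$ is finite, c\`adl\`ag and nondecreasing, the path $t\mapsto Z_{T_t}(\omega)$ is c\`adl\`ag, and $(Z_{T_t})$, $(M_{T_t})$, $(A_{T_t})$ are all $\cG$-adapted (as recalled just before the statement). Since the semimartingales form a vector space, it suffices to show that $(M_{T_t})$ and $(A_{T_t})$ are each $\cG$-semimartingales.

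For $(A_{T_t})$ this is easy: for every $t$ the total variation of $s\mapsto A_{T_s}$ on $[0,t]$ is dominated by the total variation of $A$ on $[0,T_t]$, which is finite because $T_t<\infty$. Hence $(A_{T_t})$ is a c\`adl\`ag, $\cG$-adapted process of finite variation on compacts, i.e., a $\cG$-semimartingale.

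The local martingale part is the core. Pick $\cF$-stopping times $\tau_n\uparrow\infty$ such that each $M^{\tau_n}$ is a uniformly integrable $\cF$-martingale (for instance $\tau_n=\rho_n\wedge n$ with $(\rho_n)$ a localizing sequence for $M$). Set $\sigma_n\DEF\inf\{t\geq 0:T_t\geq\tau_n\}$. Using the right-continuity of $\cG$ together with the identity $\{\sigma_n<t\}=\bigcup_{q\in\mathbb{Q},\,q<t}\{\tau_n\leq T_q\}$ and the standard fact $\{\tau_n\leq T_q\}\in{\cal F}_{\tau_n\wedge T_q}\subseteq{\cal F}_{T_q}={\cal G}_q$, one checks that each $\sigma_n$ is a $\cG$-stopping time; moreover $\sigma_n\uparrow\infty$ almost surely, since $T_s(\omega)<\infty$ for every $s$ forces $\tau_n(\omega)>T_s(\omega)$, hence $\sigma_n(\omega)>s$, once $n$ is large --- this is the only place the finiteness hypothesis on $(T_t)$ is used. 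By the optional sampling theorem applied to the uniformly integrable martingale $M^{\tau_n}$ and the $\cF$-stopping times $T_s\leq T_t$, one gets $\bbE[\,M^{\tau_n}_{T_t}\mid{\cal G}_s\,]=\bbE[\,M^{\tau_n}_{T_t}\mid{\cal F}_{T_s}\,]=M^{\tau_n}_{T_s}$, so $(M^{\tau_n}_{T_t})_t$ is a $\cG$-martingale. Finally, on $[0,\sigma_n)$ we have $T_t<\tau_n$ and hence $M_{T_t}=M^{\tau_n}_{T_t}$; consequently $(M_{T_t})$ stopped at $\sigma_n$ equals the $\cG$-martingale $(M^{\tau_n}_{T_t})$ stopped at $\sigma_n$, plus the single-jump process $(M_{T_{\sigma_n}}-M_{\tau_n})\mathbf{1}_{[\sigma_n,\infty)}$, which is $\cG$-adapted and of finite variation, hence a $\cG$-semimartingale. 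Thus $(M_{T_t})^{\sigma_n}$ is a $\cG$-semimartingale for every $n$, and since the semimartingale property is preserved under localization along $\sigma_n\uparrow\infty$, so is $(M_{T_t})$. Adding the two parts yields the claim.

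The step I expect to be the main obstacle is this last one: producing a localizing sequence consisting of $\cG$-stopping times rather than the given $\cF$-stopping times $\tau_n$, verifying that the $\sigma_n$ are genuinely $\cG$-stopping times increasing to infinity, and accounting for the discrepancy at the single instant $\sigma_n$ between $(M_{T_t})^{\sigma_n}$ and $M^{\tau_n}\circ T$ caused by a jump of the time-change overshooting $\tau_n$. The remaining ingredients --- the c\`adl\`ag property and $\cG$-adaptedness of $Z\circ T$, and the finite-variation estimate --- are routine.
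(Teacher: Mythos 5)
The paper offers no proof of this lemma; it is quoted directly from Jacod's lecture notes (Cor.\ 10.12), so there is nothing internal to compare against. Your argument is, as far as I can check, a correct and complete direct proof, and it is essentially the classical one underlying Jacod's result: split $Z=M+A$, dispose of the finite-variation part by monotonicity of $T$, and transport a localizing sequence for $M$ into the filtration $\cG$. The delicate points are exactly the ones you flag, and you handle them correctly: $\{\tau_n\leq T_q\}\in{\cal F}_{\tau_n\wedge T_q}\subseteq{\cal G}_q$ makes $\sigma_n$ a $\cG$-optional (hence, by right-continuity of $\cG$, a $\cG$-stopping) time; finiteness of each $T_s$ is precisely what forces $\sigma_n\uparrow\infty$; optional sampling applies because $M^{\tau_n}$ is closable as a uniformly integrable martingale, giving that $M^{\tau_n}\circ T$ is a $\cG$-martingale; and the overshoot correction $(M_{T_{\sigma_n}}-M_{\tau_n})\mathbf{1}_{[\sigma_n,\infty)}$ is $\cG_{\sigma_n}$-measurable (since $\tau_n\leq T_{\sigma_n}$ gives ${\cal F}_{\tau_n}\subseteq{\cal F}_{T_{\sigma_n}}={\cal G}_{\sigma_n}$) and of finite variation. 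The only ingredient you invoke without proof is that a process which is a semimartingale when stopped along a sequence of stopping times increasing to infinity is itself a semimartingale; that is a standard stability fact (e.g.\ Protter, Chap.\ II, or Jacod--Shiryaev I.4.25) and is fair to cite. In short: correct, and a useful supplement to a statement the paper only cites.
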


On the other hand, 
the local martingale property is not always preserved under a finite
time-change.
     A simple example is a standard $\cF$-Brownian motion $Z=B$ 
     with the finite $\cF$-time-change $(T_t)$ defined by $T_t\DEF \inf\{s>0; B_s=t \}$.  
     In this case, $B_{T_t}=t$ for every $t\geq 0$.  Thus, the time-changed Brownian motion is no longer a local martingale. 

     One way to exclude this unexpected possibility is to introduce the notion of synchronization, 
     which turns out to be an essential concept in developing a stochastic calculus for integrals driven by a time-changed semimartingale.      
     A process $Z$ is said to be \textit{in synchronization with the time-change} $(T_t)$ if 
     $Z$ is constant on every interval $[T_{t-},T_t]$ almost surely.  
     We occasionally write $Z\synch T$ for shorthand.  
     Other properties that a time-change preserves appear in \cite[Thm.\ 10.16]{Jacod}. 
     In the literature, Jacod~\cite{Jacod}, Kallsen and Shiryaev~\cite{Kallsen-S} use the expression $(T_t)$-\textit{adapted}
     in describing a process being in synchronization with a time-change $(T_t)$.  
     A different terminology $(T_t)$-\textit{continuous} is used by Revuz and Yor~\cite{R-Y}.   
     Nevertheless, the term synchronization is adopted here to avoid any possible confusions or misunderstandings 
     that the other expressions may create.  
  
     One quite simple yet significant observation, which connects the notion of synchronization with stochastic integrals,
     is that if an $\cF$-semimartingale $Z$ is in synchronization with a finite $\cF$-time-change $(T_t)$ and if $H\in {L(Z,{\cal F}_t)}$, 
     then $\bigl(H_{T(t-)}\bigr)\in {L(Z\circ T,{\cal G}_t)}$, where, $H_{T(t-)}$ denotes the process $H$ evaluated 
     at the left limit point $T_{t-}$ of $T$ at $t$. 
     This observation leads to the consideration of two integral processes $(\int_0^t H_s dZ_s)$ and $(\int_0^t H_{T(s-)} dZ_{T_s})$.  
     By Property \ref{Properties} (\ref{*1}), these are semimartingales with respect to the filtrations $\cF$ and $\cG$, respectively.  
     By Lemma \ref{Lem JACOD1}, the former stochastic integral can be time-changed by $(T_t)$ to produce another $\cG$-semimartingale.   
     Jacod~\cite{Jacod} shows that the two $\cG$-semimartingales 
     $(\int_0^{T_t} H_s dZ_s)$ and $(\int_0^t H_{T(s-)} dZ_{T_s})$
     coincide for any $H\in L(Z,{\cal F}_t)$. 
     This fact plays a significant role in establishing the basic Theorem \ref{Thm COV}; 
     hence, it is stated here as a lemma.  

\begin{Lem}\label{Lem JACOD2}\par \hspace{-2mm}
     \textbf{\emph{(1st Change-of-Variable Formula \cite[Prop.\ 10.21]{Jacod})}} \
     Let $Z$ be an $\cF$-semimartingale which is in synchronization with a finite $\cF$-time-change $(T_t)$.
     If $H \in {L(Z,{\cal F}_t)}$, then $\bigl(H_{T(t-)}\bigr)\in {L(Z\circ T,{\cal G}_t)}$ where ${\cal G}_t\DEF {\cal F}_{T_t}$.  
     Moreover, with probability one, for all $t\geq 0$,
     \begin{align}
          \int_0^{T_t} H_s dZ_s =\int_0^t H_{T(s-)} dZ_{T_s}. \label{JACOD2}
     \end{align}
\end{Lem}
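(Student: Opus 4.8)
The plan is to establish \eqref{JACOD2} first for simple predictable integrands and then to bootstrap to every $H\in L(Z,{\cal F}_t)$ by a monotone-class argument combined with the dominated convergence theorem for stochastic integrals. That $\bigl(H_{T(t-)}\bigr)$ belongs to $L(Z\circ T,{\cal G}_t)$ has already been recorded as the observation preceding the lemma: the process $s\mapsto T_{s-}$ is left-continuous and $\cG$-adapted, so $\bigl(H_{T(s-)}\bigr)$ is $\cG$-predictable whenever $H$ is $\cF$-predictable, while the integrability follows from $Z\synch T$, which forces $[Z\circ T,Z\circ T]_t=[Z,Z]_{T_t}$ (a jump of $T$ at a time $s$ contributes nothing, $Z$ and hence $[Z,Z]$ being frozen on $[T_{s-},T_s]$).

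\textbf{The base case.} By Property \ref{Properties}(\ref{*3}) and linearity it suffices to take $H_s=\xi\,\mathbf 1_{(a,b]}(s)$ with $0\le a<b$ and $\xi$ a bounded ${\cal F}_a$-measurable random variable. The left-hand side of \eqref{JACOD2} is then $\xi\bigl(Z_{b\wedge T_t}-Z_{a\wedge T_t}\bigr)$. For the right-hand side, set $R_c\DEF\inf\{s\ge0:T_s>c\}$; one verifies that $\{R_c\le t\}=\bigcap_{n\ge1}\{T_{t+1/n}>c\}\in{\cal G}_t$, so that each $R_c$ is a $\cG$-stopping time, and that $\{s\ge0:T_{s-}>c\}=(R_c,\infty)$, whence $\mathbf 1_{(a,b]}(T_{s-})=\mathbf 1_{(R_a,R_b]}(s)$. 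The right-hand side thus equals $\xi\bigl(Z_{T_{t\wedge R_b}}-Z_{T_{t\wedge R_a}}\bigr)$, and it remains to identify $Z_{T_{t\wedge R_c}}$ with $Z_{c\wedge T_t}$ for $c\in\{a,b\}$. When $R_c>t$ one has $T_t\le c$ and both expressions equal $Z_{T_t}$; when $R_c\le t$ one has $T_{R_c-}\le c\le T_{R_c}$ together with $T_t\ge c$, so that $c$ lies in the interval $[T_{R_c-},T_{R_c}]$ on which $Z$ is constant, which gives $Z_{T_{R_c}}=Z_c$. This last reconciliation is the main obstacle, and it is precisely the step where the hypothesis $Z\synch T$ cannot be dispensed with: without it, a jump of $T$ across the point $a$ or $b$ would make the value of $H$ at the left limit $T_{s-}$ fail to reproduce the increment of $Z$ across that point.

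\textbf{Passage to general integrands.} The family of bounded $\cF$-predictable processes for which \eqref{JACOD2} holds is a vector space, contains the simple predictable processes by the base case, and is stable under bounded pointwise limits. Indeed, if $H^n\to H$ boundedly and pointwise, then $H^n\bullet Z\to H\bullet Z$ uniformly on compacts in probability by the dominated convergence theorem for stochastic integrals, and time-changing by the finite time-change $(T_t)$ preserves this convergence, since $\sup_{t\le K}\bigl|(H^n\bullet Z)_{T_t}-(H\bullet Z)_{T_t}\bigr|\le\sup_{s\le T_K}\bigl|(H^n\bullet Z)_s-(H\bullet Z)_s\bigr|$ and $T_K<\infty$ almost surely; at the same time $H^n_{T(\cdot-)}\to H_{T(\cdot-)}$ boundedly and pointwise, so $\int_0^\cdot H^n_{T(s-)}\,dZ_{T_s}\to\int_0^\cdot H_{T(s-)}\,dZ_{T_s}$ uniformly on compacts in probability by the same theorem. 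Since products of simple predictable processes are simple predictable and the simple predictable processes generate $\mathcal P\cF$, the functional monotone class theorem then yields \eqref{JACOD2} for all bounded $\cF$-predictable $H$. Finally, an arbitrary $H\in L(Z,{\cal F}_t)$ is reached by truncation: with $H^n\DEF H\,\mathbf 1_{\{|H|\le n\}}$ one has $H^n\to H$ pointwise and $|H^n|\le|H|\in L(Z,{\cal F}_t)$, and one further application of the dominated convergence theorem for stochastic integrals to both sides of \eqref{JACOD2} — with $|H|_{T(\cdot-)}\in L(Z\circ T,{\cal G}_t)$ serving as the dominating integrand on the right — finishes the argument.
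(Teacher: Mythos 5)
The paper does not prove this lemma at all: it is imported verbatim from Jacod \cite[Prop.\ 10.21]{Jacod} and used as a black box, so there is no in-paper argument to compare yours against. Your self-contained proof follows the standard route (elementary predictable integrands, then monotone class, then truncation with dominated convergence), and the heart of it is right: the identification $\{s:T_{s-}>c\}=(R_c,\infty)$, hence $\mathbf 1_{(a,b]}(T_{s-})=\mathbf 1_{(R_a,R_b]}(s)$, and the reconciliation $Z_{T_{R_c}}=Z_c$ via $c\in[T_{R_c-},T_{R_c}]$ is exactly where synchronization enters, matching the counterexample the paper gives after the lemma. Your case analysis ($R_c>t\Rightarrow T_t\le c$; $R_c\le t\Rightarrow T_{R_c-}\le c\le T_{R_c}\le T_t$) is correct.

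Two points deserve a line more care. First, to evaluate the right-hand side as the elementary integral $\xi\bigl(Z_{T_{t\wedge R_b}}-Z_{T_{t\wedge R_a}}\bigr)$ you need $\xi\,\mathbf 1_{(R_a,R_b]}$ to be a simple $\cG$-predictable process, i.e.\ $\xi$ must be ${\cal G}_{R_a}$-measurable; this is true because $T_{R_a}\ge a$ forces ${\cal F}_a\subseteq{\cal F}_{T_{R_a}}={\cal G}_{R_a}$, but you should say so. Second, the membership assertion $\bigl(H_{T(t-)}\bigr)\in L(Z\circ T,{\cal G}_t)$ is itself part of the lemma, and you both defer to the paper's unproved ``observation'' and rely on it again when you invoke $|H|_{T(\cdot-)}$ as the dominating integrand in the final truncation step. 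Your sketch covers predictability and the quadratic-variation identity, but the definition of $L(\cdot)$ in the Appendix also involves the finite-variation part and a localizing sequence, so a complete argument must check that a localizing sequence $\{\sigma^n\}$ for $Z$ with $Z^{\sigma^n-}\in\HH$ transports (via the first hitting times of $T$ past $\sigma^n$) to one for $Z\circ T$ under which the integrability conditions on $H_{T(\cdot-)}$ follow from those on $H$. Neither issue is a wrong turn, but both are gaps a referee would ask you to fill.
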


\begin{Lem}\label{Lem JACOD3}\par \hspace{-2mm}
     \textbf{\emph{(\cite[Thm.\ 10.17]{Jacod})}} \ 
     Let $Z$ be an $\cF$-semimartingale which is in synchronization with a finite $\cF$-time-change $(T_t)$.
     Then $Z^c$ and $[Z, Z]$ are also in synchronization with $(T_t)$.  Moreover, 
     \begin{align}
          [Z\circ T, Z\circ T]=[Z, Z]\circ T, \hspace{3mm} (Z\circ T)^c=Z^c\circ T. \label{JACOD3}
     \end{align}
\end{Lem}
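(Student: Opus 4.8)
The plan is to dispose of the two synchronization assertions first and then the two identities. The key elementary observation is that if a c\`adl\`ag process $W$ is constant on the random interval $[T_{t-},T_t]$, then stopping $W$ at $T_t$ and at $T_{t-}$ produce the same process, $W^{T_t}=W^{T_{t-}}$, and conversely. Now both operations $Z\mapsto [Z,Z]$ and $Z\mapsto Z^c$ commute with stopping: the first by the usual stopping rule $[Z^\tau,Z^\tau]=[Z,Z]^\tau$, the second because stopping preserves the local martingale property together with the splitting of a local martingale into its continuous and purely discontinuous parts (so that $(Z^\tau)^c=(Z^c)^\tau$). Applying $Z\synch T$ with $\tau=T_t$ and with $\tau=T_{t-}$ therefore gives $[Z,Z]^{T_t}=[Z,Z]^{T_{t-}}$ and $(Z^c)^{T_t}=(Z^c)^{T_{t-}}$, which say exactly that $[Z,Z]$ and $Z^c$ are each constant on every $[T_{t-},T_t]$; hence $[Z,Z]\synch T$ and $Z^c\synch T$. (A minor point is to use the standard versions of these processes so that the stopping identities hold simultaneously for all stopping times on a single full-measure set.)

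Next I would prove $[Z\circ T,Z\circ T]=[Z,Z]\circ T$ by evaluating both sides through the defining formula \eqref{QUAD} and reconciling them with the first change-of-variable formula. From \eqref{QUAD}, $[Z,Z]_{T_t}=Z_{T_t}^2-2\int_0^{T_t}Z_{s-}\,dZ_s$, while $[Z\circ T,Z\circ T]_t=Z_{T_t}^2-2\int_0^t (Z\circ T)_{s-}\,dZ_{T_s}$, so it is enough to show that the two stochastic integrals coincide. Lemma \ref{Lem JACOD2} applied with $H=Z_-$ rewrites the first integral as $\int_0^t Z_{(T_{s-})-}\,dZ_{T_s}$, so the remaining task is to check that the predictable integrands $(Z\circ T)_{s-}$ and $Z_{(T_{s-})-}$ yield the same integral against $Z\circ T$. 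These two processes can differ only at times $s$ lying in a flat stretch of $T$ (where $T_v=T_{s-}$ for some $v<s$), and on any such stretch $Z\circ T$ is itself constant, hence does not charge that set and contributes nothing to a stochastic integral; summing over the countably many flat stretches shows the discrepancy is invisible. This gives $[Z\circ T,Z\circ T]_t=[Z,Z]_{T_t}$ for every $t$.

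For the final identity I would decompose $Z=Z^c+(Z-Z^c)$, noting that both summands are in synchronization with $T$, the second being a difference of synchronized processes. Since $Z^c$ is a continuous $\cF$-local martingale synchronized with $T$, the time-changed process $Z^c\circ T$ is a $\cG$-local martingale by the preservation of the local martingale property under a synchronized time-change (\cite[Thm.\ 10.16]{Jacod}), and it is continuous because $Z^c$ is frozen across the jumps of $T$; hence $(Z^c\circ T)^c=Z^c\circ T$. On the other hand, $[Z-Z^c,Z-Z^c]_t=[Z,Z]_t-[Z^c,Z^c]_t=\sum_{0<s\le t}(\Delta Z_s)^2$ is a pure-jump process, so by the identity of the previous paragraph $[(Z-Z^c)\circ T,(Z-Z^c)\circ T]$ is the time-change of a nondecreasing pure-jump process, hence again pure jump; its continuous local martingale part $((Z-Z^c)\circ T)^c$ therefore has zero quadratic variation and so vanishes. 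Adding the two pieces, $(Z\circ T)^c=(Z^c\circ T)^c+((Z-Z^c)\circ T)^c=Z^c\circ T$.

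The main obstacle is the integrand-matching step in the second paragraph: the three a priori different left-limit processes $(Z\circ T)_{t-}$, $Z_{T_{t-}}$, and $Z_{(T_{t-})-}$ genuinely differ when a flat stretch of $T$ coincides with a jump of $Z$, and the argument depends on locating precisely where this occurs and then verifying that $Z\circ T$ is constant there, so that the integrator ignores the offending set. This is exactly where the synchronization hypothesis earns its keep, and in Jacod's development it is also the point that requires the auxiliary lemmas on the left limits and jumps of a time-changed process. By comparison the stopping-time bookkeeping of the first paragraph is routine, and the last paragraph is essentially formal once the quadratic-variation identity has been established.
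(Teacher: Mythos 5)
The paper offers no proof of this lemma: it is quoted verbatim from Jacod \cite[Thm.\ 10.17]{Jacod}, so there is no in-text argument to compare yours against. Judged on its own, your proof is essentially correct and self-contained. The stopping-time characterization of synchronization ($W\synch T$ iff $W^{T_t}=W^{T_{t-}}$ for all $t$), combined with $[Z^\tau,Z^\tau]=[Z,Z]^\tau$ and $(Z^\tau)^c=(Z^c)^\tau$, cleanly yields the two synchronization claims; the derivation of $[Z\circ T,Z\circ T]=[Z,Z]\circ T$ from \eqref{QUAD} and Lemma \ref{Lem JACOD2} with $H=Z_-$ is the right reduction; and the decomposition $Z=Z^c+(Z-Z^c)$ with the quadratic-variation argument for the purely discontinuous part correctly delivers $(Z\circ T)^c=Z^c\circ T$ (note $[Z-Z^c,Z-Z^c]=[Z,Z]-[Z^c,Z^c]$ does require the orthogonality $[Z,Z^c]=[Z^c,Z^c]$, which you implicitly use). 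Two places deserve one more line of rigor. First, the integrand-matching step: to show the discrepancy set is not charged, cover it by the countably many predictable intervals $(q,b_q]$ with $q$ rational and $b_q\DEF\inf\{u>q: T_u>T_q\}$ a stopping time; synchronization gives $Z_{T_{b_q}}=Z_{T_q}$, so $\bI_{(q,b_q]}\bullet(Z\circ T)=0$, and associativity (Property \ref{Properties} (\ref{*4})) kills the contribution. Second, the assertion that the time-change of a nondecreasing pure-jump process is again pure jump should be justified, e.g.\ by writing $J=\sum_u j_u\,\bI_{[u,\infty)}$ so that $J\circ T=\sum_u j_u\,\bI_{[S_u^*,\infty)}$ with $S_u^*\DEF\inf\{v:T_v\ge u\}$, which is manifestly a countable sum of step functions. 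With these details supplied, the argument stands.
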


     The following simple example explains the significance of the synchronization assumption in Lemmas 
     \ref{Lem JACOD2} and \ref{Lem JACOD3}. 

\begin{Ex} \par
\begin{em}
     Let $Z=B$ be a standard $\cF$-Brownian motion, and define a deterministic time-change $(T_t)$ by $T_t\DEF \bI_{[1,\infty)}(t)$, 
     where $\bI_\Lambda$ denotes the indicator function over a set $\Lambda$.
     Let $H$ be a deterministic process given by $H_t=\bI_{(1/2,\;\infty)}(t)$, then $H_{T(t-)}=\bI_{(1,\infty)}(t)$.  Hence, 
     \begin{align*} 
          &\int_0^{T_1} H_s dB_s =\int_0^1 H_s dB_s =\int_{1/2}^{1} dB_s =B_{1}-B_{1/2}\, ; \\
          &\int_0^1 H_{T(s-)}dB_{T_s} =\int_0^1 0 \;dB_{T_s}=0.
     \end{align*}
     Therefore, the two integrals in \eqref{JACOD2} fail to coincide.   
     Moreover, it follows from \eqref{QUAD} that  
     \begin{align*}
          [B\circ T, B\circ T]_1=(B_{T_1})^2-2\int_0^1 B_{T_{s-}}dB_{T_s}=B_1^2-2\int_0^1 0 \;dB_{T_s}=B_1^2, 
     \end{align*}
     whereas the fact that $[B,B]_t=t$ yields $([B, B]\circ T)_1=T_1=1$. 
     Therefore, the first equality in \eqref{JACOD3} does not hold.  
     Furthermore, since $B\circ T$ is not a continuous process, $(B\circ T)^c$ and $B^c \circ T(=B\circ T)$ fail to coincide.  
     Thus, the second equality in \eqref{JACOD3} does not hold either.      
     Note that the Brownian motion $B$ never stays flat on any time interval, and hence
     is not in synchronization with the above time-change $(T_t)$.  \qed
\end{em}
\end{Ex}

     The next lemma will be used in the proof of Theorem \ref{Thm COV}.

\begin{Lem}\label{Lem SYNCH}\par 
     Let $Z$ be an $\cF$-semimartingale which is in synchronization with a finite $\cF$-time-change $(T_t)$.
     Let $H\in{L(Z,{\cal F}_t)}$.  Then the stochastic integral $H\bullet Z$ is also in synchronization with $(T_t)$.  
\end{Lem}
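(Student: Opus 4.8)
The plan is to reduce the statement to the behavior of $H \bullet Z$ on each interval $[T_{t-}, T_t]$ and exploit the fact that synchronization of $Z$ with $(T_t)$ means $Z$ is constant there. Fix $\omega$ outside a null set on which $Z$ is constant on every $[T_{t-}, T_t]$. I want to show $H \bullet Z$ is also constant on every such interval, i.e.\ $(H \bullet Z)_v - (H \bullet Z)_u = 0$ whenever $[u,v] \subseteq [T_{t-}, T_t]$ for some $t$. Since $(H \bullet Z)_v - (H \bullet Z)_u = \int_u^v H_s\, dZ_s = \bigl(\mathbf{I}_{(u,v]} H\bigr) \bullet Z$ at time $v$ (or more precisely the increment equals the value of the integral of the restricted integrand), it suffices to show that $\mathbf{I}_{(u,v]} \bullet Z$ vanishes identically once $Z$ is constant on $[u,v]$ — and then to handle a general predictable $H$ by the usual approximation.

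The key observation is that if $Z$ is constant on $[u,v]$, then the stopped-and-shifted process $Z^{v} - Z^{u}$ (where $Z^r$ denotes $Z$ stopped at $r$) is identically zero on that part of the path, so $\Delta Z_s = 0$ for $s \in (u,v]$ and, by Lemma~\ref{Lem JACOD3}, $[Z,Z]$ is likewise constant on $[u,v]$, hence $[Z,Z]_v - [Z,Z]_u = 0$. Therefore $\bigl(\mathbf{I}_{(u,v]} H\bigr) \bullet Z$ is a semimartingale (Property~\ref{Properties}(\ref{*1})) whose quadratic variation is $\bigl(H^2 \mathbf{I}_{(u,v]}\bigr) \bullet [Z,Z]$ by Property~\ref{Properties}(\ref{*5}), and this is zero because $[Z,Z]$ does not increase on $(u,v]$. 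A semimartingale with zero quadratic variation and zero initial value that is additionally a finite-variation process contributes nothing either, provided one controls the finite-variation part. This is the step where I expect to need to be careful: controlling the $dA$-type contribution. The clean way around it is to argue directly from the construction of the stochastic integral — approximate $H$ by simple predictable processes $H^n$ of the form $\sum_k a_k \mathbf{I}_{(\tau_k, \tau_{k+1}]}$, for which $\bigl(H^n \mathbf{I}_{(u,v]}\bigr) \bullet Z$ is literally a finite sum $\sum_k a_k \bigl(Z_{\tau_{k+1} \wedge v \vee u} - Z_{\tau_k \wedge v \vee u}\bigr)$, every term of which is a difference of values of $Z$ at times lying in $[u,v]$, hence zero since $Z$ is constant there. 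Passing to the limit (the convergence $H^n \bullet Z \to H \bullet Z$ is uniform on compacts in probability, so along a subsequence it holds pointwise in $\omega$) gives $\bigl(H \mathbf{I}_{(u,v]}\bigr) \bullet Z \equiv 0$.

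To finish, I would note that the increments $[T_{t-}, T_t]$ for $t \geq 0$ exhaust the ``flat stretches'' of the path of $(T_t)$, and constancy of $H \bullet Z$ on each of them is exactly the definition of $H \bullet Z \synch T$. A small technical point: one should phrase the argument so that the exceptional null set can be chosen uniformly in $t$; this is handled by taking $t$ rational and using right-continuity of both $T$ and $H \bullet Z$ (the latter being càdlàg by Property~\ref{Properties}(\ref{*1})) to extend constancy to all real $t$, together with the already-noted $\Delta (H \bullet Z) = H \cdot \Delta Z$ from Property~\ref{Properties}(\ref{*2}) to control the endpoints of the intervals. The main obstacle, as indicated, is not conceptual but bookkeeping: making the reduction to simple integrands rigorous while keeping track of a single null set, and invoking Lemma~\ref{Lem JACOD3} to know that $[Z,Z]$ inherits synchronization so that the quadratic-variation heuristic and the simple-integrand computation are consistent.
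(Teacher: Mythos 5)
Your proposal is correct and follows essentially the same route as the paper: the paper's proof simply fixes $u\in[T_{t-},T_t]$ and asserts $(H\bullet Z)_{T_t}-(H\bullet Z)_u=\int_{u+}^{T_t}H_s\,dZ_s=0$ because $Z$ is constant on $[u,T_t]$, which is exactly the fact you justify (more carefully, via simple-process approximation and ucp limits, after abandoning the quadratic-variation detour). The extra detail is fine but not a different argument.
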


\begin{proof}
     Fix $t\geq 0$, and let $u\in[T_{t-},T_t]$.  Since $Z\synch T$, $Z$ is constant on $[u, T_t]$; hence, 
     $(H\bullet Z)_{T_t}-(H\bullet Z)_u=\int_{u+}^{T_t}H_s dZ_s=0$. 
     Therefore, $(H\bullet Z)_{T_t}=(H\bullet Z)_u$.  Thus, $H\bullet Z$ is constant on $[T_{t-},T_t]$.       
\end{proof}

     The following lemma and its corollary clarify the situation of main concern in this paper.  
     The \textit{first hitting time process}, 
or the \textit{generalized inverse}, 
of a given c\`adl\`ag, nondecreasing process $S$ is a process $T$
     defined by $T_t\DEF \inf\{ u> 0\hspace{1pt};\hspace{1pt} S_u> t\}$.  It is easy to see that $T$ is also c\`adl\`ag and nondecreasing.  
     Note that every $\cF$-adapted, c\`adl\`ag, nondecreasing process has paths of finite variation on compact sets; 
     hence, \textit{a priori} it is an $\cF$-semimartingale.   
 
\begin{Lem} \label{Lem SETTING} \mbox{ } \par
     \begin{enumerate}[(1)]
          \item Let $S$ be a nondecreasing $\cF$-semimartingale such that $\lim_{t\to\infty}S_t=\infty$. 
                Then the first hitting time process $T$ of $S$ is a finite $\cF$-time-change such that $\lim_{t\to\infty}T_t=\infty$. 
                Moreover, if $S$ is strictly increasing, then $T$ has continuous paths. 
                \label{SETTING1}
          \item Let $T$ be a finite $\cF$-time-change such that $\lim_{t\to\infty}T_t=\infty$. 
                Then the first hitting time process $S$ of $T$ is a nondecreasing $\cF$-semimartingale such that 
                $\lim_{t\to\infty}S_t=\infty$.  Moreover, if $T$ has continuous paths, then $S$ is strictly increasing.  
                \label{SETTING2}
     \end{enumerate}
\end{Lem}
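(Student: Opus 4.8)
The plan is to treat the two symmetric halves in parallel, checking within each half three things in turn: the structural claim (in (1) that the inverse $T$ is an $\cF$-time-change, in (2) that the inverse $S$ is an $\cF$-semimartingale); then finiteness of the inverse together with the fact that it tends to $\infty$; and finally the equivalence between strict monotonicity of the given process and continuity of its generalized inverse. That the inverse is again c\`adl\`ag and nondecreasing has already been recorded above, and for the semimartingale claim in (2) I shall invoke the remark just before the lemma, that an $\cF$-adapted, c\`adl\`ag, nondecreasing process has paths of finite variation on compact sets and is therefore a semimartingale. The recurring tool will be the elementary generalized-inverse identities obtained by passing from real thresholds to rational ones via right-continuity.

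For the structural step, in part (1) write $T_t=\inf\{u>0:S_u>t\}$; then $\{T_t<u\}=\bigcup_{q\in\mathbb Q\cap(0,u)}\{S_q>t\}\in{\cal F}_u$ because $S$ is $\cF$-adapted, and right-continuity of $\cF$ upgrades this to $\{T_t\le u\}\in{\cal F}_u$, so each $T_t$ is an $\cF$-stopping time and $T$ is an $\cF$-time-change. In part (2), $S_t=\inf\{u>0:T_u>t\}$ and the same passage gives $\{S_t<u\}=\bigcup_{q\in\mathbb Q\cap(0,u)}\{T_q>t\}$; since each $T_q$ is an $\cF$-stopping time this event lies in ${\cal F}_t$ (and, read as $\{T_{u-}>t\}$, in ${\cal F}_u$ as well), so $S_t$ is ${\cal F}_t$-measurable and $S$ is $\cF$-adapted; being also c\`adl\`ag and nondecreasing, $S$ is an $\cF$-semimartingale.

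For finiteness and divergence, in part (1) the hypothesis $\lim_{u\to\infty}S_u=\infty$ yields, almost surely, for each $t$ some $u$ with $S_u>t$, so $T_t\le u<\infty$; and were $\sup_tT_t\le C<\infty$ on a set of positive probability, then on that set for every $t$ there would be $u<C+1$ with $S_u>t$, forcing $S_{C+1}=+\infty$ and contradicting that $S$ is a real-valued semimartingale, so $\lim_{t\to\infty}T_t=\infty$ almost surely. Part (2) is word-for-word the same after interchanging the roles of $S$ and $T$, now using $\lim_{u\to\infty}T_u=\infty$ and the almost sure finiteness of each $T_u$.

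For the regularity equivalence, a short computation with left limits identifies $T_{t_0-}=\inf\{u:S_u\ge t_0\}$ and $T_{t_0}=\inf\{u:S_u>t_0\}$, so $T$ has a jump at $t_0$ precisely when $S$ is constant, equal to $t_0$, on the nondegenerate interval $\bigl[\inf\{u:S_u\ge t_0\},\,\inf\{u:S_u>t_0\}\bigr)$; hence $S$ strictly increasing forces $T$ to be continuous, which is the last sentence of (1). For the last sentence of (2), suppose $S$ were constant on $[a,b]$ with rational $a<b$ and common value $c$; the inverse relations $\{S_t<u\}=\{T_{u-}>t\}$ and $\{T_u<t\}\subseteq\{S_t>u\}$ give $T_{c-}\le a$ and $T_c\ge b$, so continuity of $T$ (which makes $T_{c-}=T_c$) yields $b\le T_c\le a$, contradicting $a<b$; thus $T$ continuous forces $S$ strictly increasing. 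I expect this regularity step — in particular the careful bookkeeping of left limits, of infima that may or may not be attained, and of the half-open intervals produced by the inverse operation — to be the only delicate point, the rest being routine measure- and order-theoretic checking.
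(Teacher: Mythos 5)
Your proposal is correct and follows essentially the same route as the paper: adaptedness of the generalized inverse via level-set identities of the form $\{T_t<u\}=\{S_{u-}>t\}$ together with right-continuity of the filtration, finiteness and divergence read off from the hypotheses, and the duality between strict monotonicity and continuity of the inverse. You merely spell out in more detail the steps the paper dismisses as obvious (the divergence argument and the regularity equivalence), and your rational-union formulation of the level sets is equivalent to the paper's use of $S_{s-}$ and $T_{t-}$.
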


\begin{proof}   
     (1) The assumption $\lim_{t\to\infty}S_t=\infty$ implies that each random variable $T_t$ is finite.  
     In addition, since each $S_t$ is a real-valued random variable, it follows that $\lim_{t\to\infty}T_t=\infty$. 
     Fix $t\geq 0$.  Since $S$ is $\cF$-adapted, $\{ T_t < s\}=\{ S_{s-} > t \}\in{\cal F}_{s-}\subset{\cal F}_s$ for any $s>0$,  
     and obviously $\{T_t<0\}=\emptyset \in{\cal F}_0$.  
     Hence, $T_t$ is an $\cF$-optional time.  It follows from the right-continuity of $\cF$ that $T_t$ is an $\cF$-stopping time (see \cite[Prop.\ 1.2.3]{Karatzas-S}).    
     Thus, $T$ is a finite $\cF$-time-change.  
     Moreover, if $S$ is strictly increasing, then $T$ obviously has continuous paths. 
     
     (2) The assumption $\lim_{t\to\infty}T_t=\infty$ implies that each random variable $S_t$ is finite.  
     In addition, since each $T_t$ is a real-valued random variable, it follows that $\lim_{t\to\infty}S_t=\infty$. 
     Fix $s\geq 0$.  For any $t>0$, since $T_{t-}$ is also an $\cF$-stopping time, $\{S_s\geq t\}=\{T_{t-}\leq  s\}\in {\cal F}_s$.  
     Also, $\{S_s\geq  0\}=\Omega\in {\cal F}_s$.  Hence, $S_s$ is $\mathcal{F}_s$-measurable.  
     Therefore, $S$ is $\cF$-adapted.  Since $S$ is also c\`adl\`ag and nondecreasing, it is an $\cF$-semimartingale.    
     Moreover, if $T$ has continuous paths, then it is clear that $S$ is strictly increasing.      
\end{proof}

\begin{Rems}\label{Rems SETTING} \samepage \par
\begin{em} \mbox{ }
     \begin{enumerate}[(a)]
          \item
          Lemma \ref{Lem SETTING} establishes that a nondecreasing $\cF$-semimartingale $S$ and a finite $\cF$-time-change $T$ 
          are `dual' in the sense that either process with the specified condition induces the other. \label{SETTING3}
          \item 
          Part (\ref{SETTING1}) of Lemma \ref{Lem SETTING} assumes that $\lim_{t\to\infty}S_t=\infty$, which ensures that $T$ does not blow up in finite time.  
          We may lift this condition by restricting attention to $T_t$ with $t\in[0,t_\ast)$ 
          where $t_\ast=\sup_{0\leq s<\infty}S_s$, the explosion time of $T$.  
          The same argument applies to the assumption on $T$ in Part (\ref{SETTING2}). \label{SETTING4}
     \end{enumerate}
\end{em}
\end{Rems}

\begin{Notation}\label{Notations}\par
\begin{em}
     In light of Remark \ref{Rems SETTING} (\ref{SETTING3}),
     for a pair of a nondecreasing $\cF$-semimartingale $S$ and a finite $\cF$-time-change $T$, 
     \StoT\ and \TtoS\ are used to indicate respectively that $S$ induces $T$ and that $T$ induces $S$ 
     as described in Lemma \ref{Lem SETTING}.
     If $S$ is strictly increasing and $T$ has continuous paths, 
     then the double brackets \SttoT\ and \TttoS\ are employed instead.  
     Hence, the double bracket notation assumes stronger conditions than the single bracket notation.
     Hereafter, the notation $D$ and $E$ will be used to denote a pair of a strictly increasing semimartingale  
     and a continuous time-change. 
     This notation is chosen to be compatible with the continuous time-change $E$, which is induced by 
     a strictly increasing, stable subordinator $D$ of index between $0$ and $1$, in the papers 
     of Meerschaert and Scheffler~\cite{M-S_1,M-S_2} on continuous time random walks. 
\end{em}
\end{Notation}

\section{Stochastic Calculus for Stochastic Integrals Driven by a Time-changed Semimartingale} \label{sec results}

\noindent
     This section establishes a stochastic calculus for integrals driven by a time-changed semimartingale.
     The central problem is to understand such integrals by rephrasing them in terms of integrals driven by the original semimartingale. 
     Solving this problem is almost equivalent to providing a way to recognize SDEs 
     driven by a time-changed semimartingale, which aids the analysis of problems that appear in applications.  

     The following theorem, at first glance, may seem quite simple, but its impact on the formulation of our stochastic calculus is profound.  
     Recall that all processes, unless specified otherwise, are assumed to take values in $\bbR$ and start at $0$ throughout the paper.  
 
\begin{Thm}\label{Thm COV}\par \hspace{-2mm}
     \textbf{\emph{(2nd Change-of-Variable Formula)}} \
     Let $Z$ be an $\cF$-semimartingale.  
     Let $S$ and $T$ be a pair satisfying \StoT\ or \TtoS.
     Suppose $Z$ is in synchronization with $T$.
     If $K\in {L(Z\circ T,{\cal G}_t)}$, then $\bigl(K_{S(t-)}\bigr)\in L(Z, {{\cal G}_{S_t}})$ where ${\cal G}_t\DEF {\cal F}_{T_t}$. 
     Moreover, with probability one, for all $t\geq 0$,
     \begin{align}
          \int_0^t K_s dZ_{T_s} = \int_0^{T_t}K_{S(s-)}dZ_s . \label{COV1}
     \end{align}
\end{Thm}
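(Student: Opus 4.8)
The plan is to derive \eqref{COV1} from the 1st Change-of-Variable Formula (Lemma \ref{Lem JACOD2}), applied inside the time-changed filtration $\cG$ to the $\cG$-semimartingale $Z\circ T$ and to $S$ regarded as a time-change of $\cG$. Two setup facts are needed. First, $S$ is a finite $\cG$-time-change: each $T_t$ is ${\cal F}_{T_t}$-measurable, so $T$ is a nondecreasing $\cG$-semimartingale, and by \StoT\ (resp.\ \TtoS), together with the duality of Remark \ref{Rems SETTING}, one has $\lim_{t\to\infty}T_t=\infty$ and $S$ is the first hitting time process of $T$; hence Lemma \ref{Lem SETTING}(1), read inside $\cG$, applies to $T$ and gives that $S$ is a finite $\cG$-time-change. (Alternatively, $\{S_t<u\}=\{T_{u-}>t\}\in{\cal G}_{u-}$.) Second, $Z\circ T$ is a $\cG$-semimartingale by Lemma \ref{Lem JACOD1}.

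The crucial step is to verify that $Z\circ T$ is in synchronization with $S$. Fix $t\geq 0$; if $S_{t-}=S_t$ there is nothing to prove, so suppose $S_{t-}<S_t$. From $S_u=\inf\{w>0:T_w>u\}$ one checks directly that $T\equiv t$ on $(S_{t-},S_t)$: for $v$ in this interval, $v<S_t$ forces $T_v\leq t$, while $v>S_{t-}$ forces $T_v\geq r$ for every $r<t$. Consequently $Z\circ T$ is constant on $[S_{t-},S_t)$ by right-continuity, so its left limit at $S_t$ is $Z_t$; and since $T_{S_t}\geq t$ and $Z$ is constant on $[t,T_{S_t}]$ by $Z\synch T$, the jump of $Z\circ T$ at $S_t$ equals $Z_{T_{S_t}}-Z_t=0$. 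Hence $Z\circ T$ is constant on $[S_{t-},S_t]$, i.e.\ $(Z\circ T)\synch S$. The same kind of estimate, for a general $t$, yields $T_{S_t-}\leq t\leq T_{S_t}$, whence $(Z\circ T)_{S_t}=Z_{T_{S_t}}=Z_t$; that is, $(Z\circ T)\circ S=Z$.

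Granting these, Lemma \ref{Lem JACOD2} applied to $Z\circ T$, the finite $\cG$-time-change $S$, and the integrand $K\in L(Z\circ T,\cG)$ yields $\bigl(K_{S(t-)}\bigr)\in L\bigl((Z\circ T)\circ S,\,{\cal G}_{S_t}\bigr)=L(Z,{\cal G}_{S_t})$ --- the first assertion of the theorem --- and, on a single almost-sure event and for all $u\geq 0$,
\begin{align*}
     \int_0^{S_u}K_s\,dZ_{T_s}=\int_0^{S_u}K_s\,d(Z\circ T)_s=\int_0^u K_{S(s-)}\,d\bigl((Z\circ T)\circ S\bigr)_s=\int_0^u K_{S(s-)}\,dZ_s .
\end{align*}
Putting $u=T_t$, the remaining task is to show $\int_0^{S_{T_t}}K_s\,dZ_{T_s}=\int_0^t K_s\,dZ_{T_s}$, i.e.\ $\int_{(t,S_{T_t}]}K_s\,d(Z\circ T)_s=0$. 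But $S_{T_t}\geq t$, and for $v\in[t,S_{T_t})$ one has $T_v\leq T_t$ (since $v<S_{T_t}=\inf\{w>0:T_w>T_t\}$) and $T_v\geq T_t$ (since $v\geq t$), so $Z\circ T$ is constant on $[t,S_{T_t})$; moreover $\Delta(Z\circ T)_{S_{T_t}}=Z_{T_{S_{T_t}}}-Z_{T_t}=0$ by $Z\synch T$ once more. Hence $Z\circ T$ is constant on $[t,S_{T_t}]$, the stochastic integral over $(t,S_{T_t}]$ vanishes, and \eqref{COV1} follows.

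I expect the main obstacle to be the verification that $Z\circ T$ is in synchronization with $S$, together with the companion identity $(Z\circ T)\circ S=Z$: both hinge on the classical but delicate interplay of a mutually-generalized-inverse pair $(S,T)$ --- a jump of $S$ corresponds to a stretch on which $T$, and hence $Z\circ T$, is flat, whereas a jump of $T$ is exactly what the hypothesis $Z\synch T$ absorbs --- and these pathwise statements must be arranged to hold simultaneously for all $t$ with probability one. A secondary point requiring care is the repeated use of the fact that a stochastic integral over an interval on which the integrator is constant vanishes, and the tracking of which filtration ($\cF$, $\cG$, or $({\cal G}_{S_t})$) each process is adapted to as one passes between them.
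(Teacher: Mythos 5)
Your proposal is correct and follows essentially the same route as the paper: recognize $S$ as a finite $\cG$-time-change, verify $(Z\circ T)\synch S$ and $(Z\circ T)\circ S=Z$, apply the 1st change-of-variable formula (Lemma \ref{Lem JACOD2}) inside $\cG$, and then pass from $S_{T_t}$ back to $t$ using constancy of the integrator on $[t,S_{T_t}]$. The only cosmetic difference is that you re-derive that last constancy step by hand where the paper invokes Lemma \ref{Lem SYNCH}, and you spell out the synchronization $(Z\circ T)\synch S$ in somewhat more pathwise detail than the paper does.
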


\begin{proof}   
     By Lemma \ref{Lem JACOD1}, both $T$ and $Y\DEF Z\circ T$ are $\cG$-semimartingales.  
     Since $T$ is a nondecreasing $\cG$-semimartingale such that $\lim_{t\to\infty}T_t=\infty$ and $T_0=0$, 
     it follows from Part (1) of Lemma \ref{Lem SETTING} along with Remark \ref{Rems SETTING} (\ref{SETTING4}) 
     that $S$ is a finite $\cG$-time-change.  
     On any half open interval $[S_{s-},S_s)$, $T$ is obviously constant by construction and hence so is $Y$.  Moreover, 
     since $Z\synch T$, 
     \begin{align*}
          (Z\circ T)_{S(s)}=Z_{T(S(s))}=Z_{T(S(s)-)}=Z_{T(S(s-))}=(Z\circ T)_{S(s-)}.
     \end{align*}   
     Hence, $Y_{S_s}=Y_{S(s-)}$.  Thus, $Y$ is constant on any closed interval $[S_{s-},S_s]$.  Therefore, $Y\synch S$.    

     Now, let $K\in L(Y,{{\cal G}_t})$.  Then it follows from Lemma \ref{Lem JACOD2} that 
     $\bigl(K_{S(t-)}\bigr)\in L(Y\circ S, {{\cal G}_{S_t}})$.  
     By the 1st change-of-variable formula \eqref{JACOD2} and the assumption $Z\synch T$,  with probability one 
     \begin{align*}
          \int_0^{S_t} K_s dY_s=\int_0^t K_{S(s-)} dY_{S_s}=\int_0^t K_{S(s-)} dZ_{T(S(s))} = \int_0^t  K_{S(s-)} dZ_s
     \end{align*}          
     for all $t\geq 0$.  Hence, with probability one,
     \begin{align}
          \int_0^{S_{T_t}} K_s dY_s=\int_0^{T_t} K_{S(s-)} dZ_s \label{COV2}
     \end{align}
     for all $t\geq 0$.  Since $Y\synch S$, Lemma \ref{Lem SYNCH} yields $K\bullet Y\synch S$. 
     Any $t$ is contained in the interval $[S_{T(t)-},S_{T_t}]$, so $(K\bullet Y)_{S_{T_t}}=(K\bullet Y)_t$. 
     Thus, \eqref{COV2} establishes \eqref{COV1}.      
\end{proof}

\begin{Rems}\label{Rems COV}\par
\begin{em}\mbox{ }
     \begin{enumerate}[(a)]
          \item Theorem \ref{Thm COV} guarantees that \textit{any} stochastic integral driven by a time-changed semimartingale 
                is a time-changed stochastic integral
                driven by the original semimartingale, 
                as long as the semimartingale is in synchronization with the time-change. 
          \item If a pair $D$ and $E$ satisfies \DttoE\ or \EttoD, then \textit{any} process $Z$ is automatically 
                in synchronization with $E$ due to the continuity of $E$. 
                Therefore, under either of these stronger conditions,  Theorem \ref{Thm COV}  is valid for an arbitrary $\cF$-semimartingale $Z$.
                \label{COV3}
     \end{enumerate}
\end{em}
\end{Rems}

     In light of Remark \ref{Rems COV} (\ref{COV3}), when \DttoE\ or \EttoD, 
     the It\^o formula for stochastic integrals driven by a time-changed semimartingale 
     can be reformulated in a nice way via the 2nd change-of-variable formula \eqref{COV1} obtained in Theorem \ref{Thm COV}.  
     The proof of Theorem \ref{Thm ITO} is provided in full detail since it demonstrates important computational techniques 
     on quadratic variations which are frequently employed
     in Section \ref{sec LSDE}.   

\begin{Thm}\label{Thm ITO} \par \hspace{-2mm}
     \textbf{\emph{(Time-changed It\^o Formula)}} \
     Let $Z$ be an $\cF$-semimartingale.  Let $D$ and $E$ be a pair satisfying \DttoE\ or \EttoD.
     Define a filtration $\cG$ by ${\cal G}_t\DEF {\cal F}_{E_t}$. 
     Let $X$ be a process defined by 
     \begin{align}
          X_t:
          =(A\bullet m)_t+(F\bullet E)_t+\bigl(G\bullet (Z\circ E)\bigr)_t \label{ITO1}
          =\int_0^t A_s ds+\int_0^t F_s dE_s+ \int_0^t G_s dZ_{E_s} 
     \end{align} 
     where $A\in L(m,{{\cal G}_t})$, $F\in L(E,{{\cal G}_t})$, $G\in {L(Z\circ E,{\cal G}_t)}$, and $m$ is the identity map on $\bbR$
     corresponding to Lebesgue measure.
     If $f: \bbR\longrightarrow \bbR$ is a $C^2$ function,  
     then $f(X)$ is a $\cG$-semimartingale, and with probability one, for all $t\geq 0$, 
     \begin{align}  
          f(X_t)-f(0)
          =&\int_0^t f'(X_{s-}) A_s ds+\int_0^{E_t} f'\bigl(X_{D(s-)-}\bigr) F_{D(s-)} ds \label{ITO2} \vspace{1mm}\\
          +&\int_0^{E_t} f'\bigl(X_{D(s-)-}\bigr) {G}_{D(s-)} dZ_s
              + \dfrac 12 \int_0^{E_t} f''\bigl(X_{D(s-)-}\bigr) \bigl\{{G}_{D(s-)}\bigr\}^2 d[Z,Z]_s^c \notag \vspace{1mm}\\
          +&\sum_{0<s\leq t}^{\mbox{}}\bigl\{ f(X_s)-f(X_{s-})-f'(X_{s-})\Delta X_s \bigr\}. \notag
     \end{align}
     In particular, if $Z$ is a standard Brownian motion $B$, then with probability one, for all $t\geq 0$, 
     \begin{align}  
          f(X_t)-f(0)
          =\ &\int_0^t f'(X_s) A_s ds+\int_0^{E_t} f'\bigl(X_{D(s-)}\bigr) F_{D(s-)} ds \label{ITO3} \vspace{1mm}\\
          &+\int_0^{E_t} f'\bigl(X_{D(s-)}\bigr) {G}_{D(s-)} dB_s
                 + \dfrac 12 \int_0^{E_t} f''\bigl(X_{D(s-)}\bigr) \bigl\{{G}_{D(s-)}\bigr\}^2 ds. \notag 
     \end{align}
\end{Thm}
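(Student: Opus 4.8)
The plan is to apply the classical It\^o formula \eqref{ItoFormula1} to $f(X)$ and then use Theorem \ref{Thm COV} to rewrite, one by one, the terms that live on the new clock as integrals over the original clock. First I would record that $X$ in \eqref{ITO1} is a $\cG$-semimartingale: $A\bullet m$ is a continuous process of finite variation, $E$ is a continuous nondecreasing $\cG$-adapted process so $F\bullet E$ is as well, and by Lemma \ref{Lem JACOD1} $Z\circ E$ is a $\cG$-semimartingale, whence $G\bullet(Z\circ E)$ is one by Property \ref{Properties} (\ref{*1}); their sum $X$ is therefore a $\cG$-semimartingale, the general It\^o formula applies, and it gives
\[
 f(X_t)-f(0)=\int_0^t f'(X_{s-})\,dX_s+\tfrac12\int_0^t f''(X_{s-})\,d[X,X]_s^c+\sum_{0<s\le t}\bigl\{f(X_s)-f(X_{s-})-f'(X_{s-})\Delta X_s\bigr\}.
\]
The last sum is already the jump term of \eqref{ITO2}, so it remains to recast the two integrals.

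For the first integral, Properties \ref{Properties} (\ref{*3}) and (\ref{*4}) give $f'(X_-)\bullet X=(f'(X_-)A)\bullet m+(f'(X_-)F)\bullet E+(f'(X_-)G)\bullet(Z\circ E)$, i.e.\ $\int_0^t f'(X_{s-})\,dX_s$ equals $\int_0^t f'(X_{s-})A_s\,ds+\int_0^t f'(X_{s-})F_s\,dE_s+\int_0^t f'(X_{s-})G_s\,dZ_{E_s}$. The first summand is already the first term of \eqref{ITO2}. For the second, writing $E_s=m_{E_s}$ and noting that every process is in synchronization with the continuous time-change $E$ (Remark \ref{Rems COV} (\ref{COV3})), Theorem \ref{Thm COV} applied to the semimartingale $m$ with the pair $D,E$ yields $\int_0^t K_s\,dm_{E_s}=\int_0^{E_t}K_{D(s-)}\,dm_s=\int_0^{E_t}K_{D(s-)}\,ds$; taking $K_s=f'(X_{s-})F_s$ produces the second term of \eqref{ITO2}. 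For the third, Theorem \ref{Thm COV} applied directly to $Z$ with $K_s=f'(X_{s-})G_s$ gives $\int_0^{E_t}f'\bigl(X_{D(s-)-}\bigr)G_{D(s-)}\,dZ_s$, the third term of \eqref{ITO2}.

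The remaining, and most delicate, point is the quadratic-variation term. Because $A\bullet m+F\bullet E$ is continuous of finite variation, it has zero quadratic covariation with every semimartingale, so by bilinearity and Property \ref{Properties} (\ref{*5}) one has $[X,X]=[G\bullet(Z\circ E),G\bullet(Z\circ E)]=G^2\bullet[Z\circ E,Z\circ E]$. Using the convention $[X,X]^c=[X^c,X^c]$ together with $X^c=(G\bullet(Z\circ E))^c=G\bullet(Z\circ E)^c=G\bullet(Z^c\circ E)$ — the last equality being Lemma \ref{Lem JACOD3} applied to $Z$, which is automatically in synchronization with $E$ — and then Property \ref{Properties} (\ref{*5}) and Lemma \ref{Lem JACOD3} applied to $Z^c$, I get $[X,X]^c=G^2\bullet[Z^c\circ E,Z^c\circ E]=G^2\bullet\bigl([Z^c,Z^c]\circ E\bigr)=G^2\bullet\bigl([Z,Z]^c\circ E\bigr)$. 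Hence $\tfrac12\int_0^t f''(X_{s-})\,d[X,X]_s^c=\tfrac12\int_0^t f''(X_{s-})G_s^2\,d[Z,Z]^c_{E_s}$, and one more application of Theorem \ref{Thm COV} (now to the continuous finite-variation semimartingale $[Z,Z]^c$) turns this into $\tfrac12\int_0^{E_t}f''\bigl(X_{D(s-)-}\bigr)\{G_{D(s-)}\}^2\,d[Z,Z]^c_s$, the fourth term of \eqref{ITO2}. Collecting the five pieces proves \eqref{ITO2}.

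Finally, for the Brownian case $Z=B$: since $B$ and $E$ are both continuous, so is $B\circ E$, hence $\Delta X=G\,\Delta(B\circ E)=0$, $X$ is continuous, the jump sum in \eqref{ITO2} vanishes, and $X_{s-}=X_s$, $X_{D(s-)-}=X_{D(s-)}$; moreover $[B,B]_s=s$ gives $[B,B]^c_s=s$, so $d[B,B]^c_s=ds$, and \eqref{ITO2} collapses to \eqref{ITO3}. I expect the main obstacle to be the identity $[X,X]^c=G^2\bullet\bigl([Z,Z]^c\circ E\bigr)$, which requires handling the decomposition of $X^c$ correctly (the vanishing of the finite-variation contributions, the rule $(H\bullet Y)^c=H\bullet Y^c$, and the two invocations of Lemma \ref{Lem JACOD3}); everything else is bookkeeping with Theorem \ref{Thm COV} plus the routine verification that $f'(X_-)F$, $f'(X_-)G$ and $f''(X_-)G^2$ are admissible integrands, being products of locally bounded predictable processes with integrable ones.
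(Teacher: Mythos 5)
Your proposal is correct and follows essentially the same route as the paper's proof: apply the classical It\^o formula \eqref{ItoFormula1} to the $\cG$-semimartingale $X$, convert the $dE_s$ and $dZ_{E_s}$ terms via Theorem \ref{Thm COV}, kill the cross-variations involving the continuous finite-variation integrators $m$ and $E$, and use Lemma \ref{Lem JACOD3} to identify $[Z\circ E,Z\circ E]^c$ with $[Z,Z]^c\circ E$. The only local deviation is in passing from $[X,X]=G^2\bullet[Y,Y]$ to $[X,X]^c=G^2\bullet[Y,Y]^c$ (with $Y=Z\circ E$): the paper does this by explicitly subtracting the jump sum $\sum_{0<s\leq t}G_s^2\,\Delta[Y,Y]_s$, whereas you route through $X^c=G\bullet(Z^c\circ E)$ using the identity $(H\bullet Y)^c=H\bullet Y^c$ for the continuous local martingale part of a stochastic integral --- a standard fact not among the paper's listed Properties, but equally valid, so both arguments go through.
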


\begin{proof}   
     Since the process $X$ in \eqref{ITO1} is defined to be a sum of stochastic integrals driven by $\cG$-semimartingales, 
     $X$ itself is also a $\cG$-semimartingale by Property \ref{Properties} (\ref{*1}).  
     The It\^o formula \eqref{ItoFormula1} with $d=1$ yields, for all $t\ge 0$,
     \begin{align}  
       f(X_t)-f(0)&=\int_{0}^t f'(X_{s-}) dX_s+ \dfrac 12 \int_{0}^t f''(X_{s-}) d[X,X]_s^c \label{ITO11} \\
                     & \ \ \ +\sum_{0<s\leq t}\bigl\{ f(X_s)-f(X_{s-})-f'(X_{s-})\Delta X_s \bigr\}. \notag
     \end{align}
     Using Properties \ref{Properties} (\ref{*3}), (\ref{*4}) and the 2nd change-of-variable formula \eqref{COV1}, 
     \begin{align}
          \int_{0}^t f'(X_{s-})  dX_s \label{ITO13}
          &=\int_{0}^t f'(X_{s-})  A_s ds +\int_{0}^t f'(X_{s-})  F_s dE_s + \int_{0}^t f'(X_{s-})  G_s dZ_{E_s} \\
          =\int_{0}^t f'(X_{s-})  &A_s ds +\int_{0}^{E_t} \hspace{-1mm}f'\bigl(X_{D(s-)-}\bigr) F_{D(s-)} ds 
                          + \int_{0}^{E_t}\hspace{-1mm} f'\bigl(X_{D(s-)-}\bigr)  G_{D(s-)} dZ_s. \notag
     \end{align}

     For the second integral on the right hand side of \eqref{ITO11}, first let $Y\DEF Z\circ E$.   We claim that  
     \begin{align}
          [X,X]_t^c=\int_0^t G_s^2 d[Y,Y]_s^c. \label{ITO14}
     \end{align}
     To prove this, first note that $m$ and $E$ are both continuous processes of finite variation on compact sets.  
     By \cite[II.\ Thm.\ 26]{Protter},  
     \begin{align*}
          [m,Y]_t=\sum_{0<s\leq t}\Delta[m,Y]_s=\sum_{0<s\leq t}(\Delta m_s)\cdot (\Delta X_s)=0 
     \end{align*}
     for all $t\geq 0$.  Hence, $[m,Y]=0$.  Similarly, $[m,m]=[m,E]=[E,E]=[E,Y]=0$.  
     Therefore, the bilinearity of $[\cdot,\cdot]$ and Property \ref{Properties} (\ref{*5}) imply
     \begin{align}
          [X,X]=[A\bullet m + F\bullet E + G\bullet Y, \hspace{1pt}A\bullet m + F\bullet E + G\bullet Y]=G^2\bullet [Y,Y].\label{ITO15}
     \end{align}
     Now, let $J_t\DEF \sum_{0<s\leq t}\Delta [Y,Y]_s$ so that $[Y,Y]_t^c=[Y,Y]_t-J_t$. 
     Then the pure jump process, $J$, shares with  $[Y,Y]$  the same jump times and sizes.  Therefore, 
     \begin{align*}
          \sum_{0<s\leq t} G_s^2 \Delta [Y,Y]_s
          =\sum_{0<s\leq t} G_s^2 \Delta J_s
          =\int_0^t G_s^2 dJ_s.
     \end{align*}
     Hence, it follows from \eqref{ITO15} together with Properties \ref{Properties} (\ref{*2}), (\ref{*3}) that 
     \begin{align*}
          [X,X]_t^c
          &=[X,X]_t - \sum_{0<s\leq t} \Delta [X,X]_s =(G^2\bullet [Y,Y])_t - \sum_{0<s\leq t} G_s^2 \Delta[Y,Y]_s  \\
          &=\int_0^t G_s^2 d[Y,Y]_s -\int_0^t G_s^2 dJ_s =\int_0^t G_s^2 d[Y,Y]_s^c,
     \end{align*}
     thereby establishing \eqref{ITO14}.   
     
     Since $Z\synch E$, repeated use of Lemma \ref{Lem JACOD3} yields 
     \begin{align}
          [Y,Y]^c =[Y^c,Y^c]=[Z^c\circ E,Z^c\circ E]=[Z^c,Z^c]\circ E =[Z,Z]^c\circ E. \label{ITO16}
     \end{align} 
     Together \eqref{ITO14} and \eqref{ITO16} yield                        
          $[X,X]_t^c=\int_0^t G_s^2 d[Z,Z]_{{}_{E_s}}^c$. 
     Therefore, it follows from Property \ref{Properties} (\ref{*4}) and the 2nd change-of-variable formula \eqref{COV1} that      
     \begin{align}
          \int_{0}^t f''(X_{s-}) d[X,X]_s^c &= \int_0^t f''(X_{s-}) G_s^2 d[Z,Z]_{{}_{E_s}}^c   \label{ITO17}\\
           &=\int_0^{E_t} f''\bigl(X_{D(s-)-}\bigr) \{G_{D(s-)}\}^2 d[Z,Z]_s^c.     \notag
     \end{align}
     Equality \eqref{ITO2} follows by plugging \eqref{ITO13} and \eqref{ITO17} into Formula \eqref{ITO11}. 
     
     If $Z=B$ is a standard Brownian motion, then the continuity of $m$, $E$ and $B\circ E$ together with 
     Property \ref{Properties} (\ref{*2})
     imply $X$ is also continuous.   Since $[B,B]_t^c=[B,B]_t=t$, statement \eqref{ITO3} follows immediately.     
\end{proof}

     A similar proof yields the multidimensional version of Theorem \ref{Thm ITO}.   
     For a multidimensional process $W$, its $i$-th component is denoted $W^i$. 

\begin{Cor}\label{Cor ITO}\par
     Let $Z$ be an $n$-dimensional $\cF$-semimartingale starting at $0$.  
     Let $D$ and $E$ be a pair satisfying \DttoE\ or \EttoD.
     Define a filtration $\cG$ by ${\cal G}_t\DEF {\cal F}_{E_t}$. 
     Let $X$ be a $d$-dimensional process defined by 
     \begin{align*}
          X_t\DEF \int_0^t A_s ds+\int_0^t F_s dE_s+ \sum_{k=1}^n \int_0^t G_s^k dZ_{E_s}^k 
     \end{align*} 
     where $A$, $F$ and $G^k=(G^{k,1},\ldots,G^{k,d})$ $(k=1,\ldots,n)$ are d-dimensional processes 
     for which all the above integrals are defined.  
     If $f: \bbR^d \longrightarrow \bbR$ is a $C^2$ function,  
     then $f(X)$ is a $\cG$-semimartingale, and with probability one, for all $t\geq 0$, 
     \begin{align}  
        f(X_t)-f(0)\label{ITO18}
        &=\sum_{i=1}^d\int_0^t \frac{\partial f}{\partial x^i}(X_{s-}) A_s^i ds
              +\sum_{i=1}^d\int_0^{E_t} \frac{\partial f}{\partial x^i}\bigl( X_{D(s-)-}\bigr) F_{D(s-)}^i ds \vspace{1mm}\\
        & \ \ \ +\sum_{i=1}^d\sum_{k=1}^n\int_0^{E_t} \frac{\partial f}{\partial x^i}\bigl(X_{D(s-)-}\bigr) 
                G_{D(s-)}^{k,i} dZ_s^k \notag \vspace{1mm}\\
        & \ \ \ + \dfrac 12 \sum_{i,j=1}^d \sum_{k,\ell=1}^n \int_0^{E_t} \frac{\partial^2 f}{\partial x^i \partial x^j}\bigl(X_{D(s-)-}\bigr) 
                          {G}_{D(s-)}^{k,i} {G}_{D(s-)}^{\ell,j} d[Z^k,Z^\ell]_s^c \notag  \vspace{1mm}\\
        & \ \ \ +\sum_{0<s\leq t}\Bigl\{ f(X_s)-f(X_{s-})-\sum_{i=1}^d \dfrac{\partial f}{\partial x^i}(X_{s-})\Delta X_s^i \Bigr\}. \notag
     \end{align}
\end{Cor}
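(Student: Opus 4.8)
The plan is to follow the one-dimensional argument of Theorem \ref{Thm ITO} essentially verbatim, replacing the scalar It\^o formula by its $d$-dimensional counterpart \eqref{ItoFormula1} and carrying the extra indices $i,j\in\{1,\dots,d\}$ and $k,\ell\in\{1,\dots,n\}$ through the computation. First I would observe that $X$ is a $\cG$-semimartingale: each component $X^i=A^i\bullet m+F^i\bullet E+\sum_{k=1}^n G^{k,i}\bullet(Z^k\circ E)$ is a finite sum of stochastic integrals driven by $\cG$-semimartingales (using Lemma \ref{Lem JACOD1} to see that each $Z^k\circ E$ is a $\cG$-semimartingale), hence a $\cG$-semimartingale by Property \ref{Properties} (\ref{*1}). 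Applying \eqref{ItoFormula1} to $f(X)$ then produces three families of terms: the drift integrals $\sum_i\int_0^t \frac{\partial f}{\partial x^i}(X_{s-})\,dX^i_s$, the quadratic-variation integrals $\frac12\sum_{i,j}\int_0^t \frac{\partial^2 f}{\partial x^i\partial x^j}(X_{s-})\,d[X^i,X^j]^c_s$, and the jump sum, the last of which already appears in the desired form \eqref{ITO18}.

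For the drift family I would expand $dX^i$ as above and invoke Properties \ref{Properties} (\ref{*3}), (\ref{*4}) together with the 2nd change-of-variable formula \eqref{COV1}, applied componentwise (each $Z^k$ is automatically in synchronization with $E$ by Remark \ref{Rems COV} (\ref{COV3})), to rewrite $\int_0^t \frac{\partial f}{\partial x^i}(X_{s-})G^{k,i}_s\,dZ^k_{E_s}$ as $\int_0^{E_t}\frac{\partial f}{\partial x^i}\bigl(X_{D(s-)-}\bigr)G^{k,i}_{D(s-)}\,dZ^k_s$, and similarly for the $dE_s$ piece; the $ds$ piece is left untouched. This reproduces the first three sums on the right-hand side of \eqref{ITO18}.

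For the quadratic-variation family I would first note that $m$ and $E$, being continuous and of finite variation on compact sets, have vanishing bracket with $m$, $E$ and with each $Z^\ell\circ E$, exactly as in the derivation of \eqref{ITO15}; bilinearity of $[\cdot,\cdot]$ and Property \ref{Properties} (\ref{*5}) then give $[X^i,X^j]=\sum_{k,\ell}(G^{k,i}G^{\ell,j})\bullet[Z^k\circ E,Z^\ell\circ E]$, and subtracting jumps exactly as in the proof of \eqref{ITO14} yields the same identity with continuous parts, $[X^i,X^j]^c=\sum_{k,\ell}(G^{k,i}G^{\ell,j})\bullet[Z^k\circ E,Z^\ell\circ E]^c$. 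Writing $[Z^k\circ E,Z^\ell\circ E]^c=[(Z^k\circ E)^c,(Z^\ell\circ E)^c]=[Z^{k,c}\circ E,Z^{\ell,c}\circ E]$ and then identifying this with $[Z^{k,c},Z^{\ell,c}]\circ E=[Z^k,Z^\ell]^c\circ E$ via Lemma \ref{Lem JACOD3} reduces $[X^i,X^j]^c_t$ to $\sum_{k,\ell}\int_0^t G^{k,i}_s G^{\ell,j}_s\,d[Z^k,Z^\ell]^c_{E_s}$, after which Property \ref{Properties} (\ref{*4}) and \eqref{COV1} turn this into an integral over $[0,E_t]$, giving the fourth sum of \eqref{ITO18}. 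Assembling the three families and noting, in the Brownian case, that continuity of $m$, $E$ and each $B^k\circ E$ (via Property \ref{Properties} (\ref{*2})) forces $X$ to be continuous, completes the argument.

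The one point requiring genuine care --- the main obstacle --- is the passage to the \emph{cross}-variations $[Z^k,Z^\ell]$, since Lemma \ref{Lem JACOD3} is stated only for the bracket $[Z,Z]$ of a single semimartingale $Z$. I would dispose of this by polarization: $[Z^k,Z^\ell]=\tfrac14\bigl([Z^k+Z^\ell,Z^k+Z^\ell]-[Z^k-Z^\ell,Z^k-Z^\ell]\bigr)$, and likewise for $[Z^{k,c},Z^{\ell,c}]$. Because $E$ has continuous paths, every process --- in particular $Z^k\pm Z^\ell$ and $Z^{k,c}\pm Z^{\ell,c}$ --- is in synchronization with $E$, so Lemma \ref{Lem JACOD3} applies to each summand and the cross-variation statements $[Z^k\circ E,Z^\ell\circ E]=[Z^k,Z^\ell]\circ E$ and $(Z^k\circ E)^c=Z^{k,c}\circ E$ follow. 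Everything else is the indexed bookkeeping of the scalar proof.
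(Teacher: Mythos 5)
Your proposal is correct and follows essentially the same route as the paper, which proves this corollary only by remarking that ``a similar proof yields the multidimensional version of Theorem \ref{Thm ITO}''; you have carried out exactly that indexed version of the scalar argument. Your polarization step extending Lemma \ref{Lem JACOD3} to the cross-variations $[Z^k,Z^\ell]$ is the one detail the paper leaves implicit, and it is handled correctly (synchronization of $Z^k\pm Z^\ell$ with $E$ is automatic from the continuity of $E$).
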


\begin{Rems}\label{Rems ITO}\par
\begin{em}\mbox{ }
     \begin{enumerate}[(a)]
          \item The first integral in Formula \eqref{ITO2} can also be expressed as a time-changed stochastic integral.  
          By the 2nd change-of-variable formula \eqref{COV1}, 
          \begin{align}
               \int_0^tf'(X_{s-})A_s ds \label{ITO21}
               &= \int_0^tf'(X_{s-})A_s dD_{E_s} +\sum_{0<s\leq t}f'(X_{s-})A_s\Delta(D\circ E)_s \\
               &=\int_0^{E_t} f'\bigl(X_{D(s-)-}\bigr) A_{D(s-)} dD_s +\sum_{0<s\leq t}f'(X_{s-})A_s\Delta(D\circ E)_s\notag
          \end{align}
          as long as all integrals are defined.  The additional term arises due to the discontinuities of $D$. 
          \item The stronger condition \DttoE\ or \EttoD, rather than \DtoE\ or \EtoD, is essential 
          in establishing the nice representations \eqref{ITO2} and \eqref{ITO18}.  
          For example, if $E$ has jumps, then the stochastic integral
          $\int_{0}^t f'(X_{s-})  F_s dE_s$ 
          in \eqref{ITO13} may not be rephrased 
          as a time-changed integral driven by $ds$ since the identity map $m(s)=s$ is no longer in synchronization with $E$.  
          Moreover, the equalities $[E,E]=0$
          and $[E,Y]=0$ both may fail, which implies more terms need to be included in \eqref{ITO14}.      
          \item In real situations, the distributions of 
          $Z$, $D$ and $E$ are known 
          through statistical data, and scientists 
          will seek to reveal the behavior of a process $X$ described via an SDE of the form 
          \begin{align}
               dX_t=\rho(t,E_t,X_t)dt+\mu(t,E_t,X_t)dE_t+\sigma(t,E_t,X_t)dZ_{E_t}. \label{ITO22}
          \end{align}
          Formula \eqref{ITO2} encourages handling the solution to Equation \eqref{ITO22} 
          via conditioning.  In particular, when $Z$ is continuous and $A\equiv 0$, the right hand side of Formula \eqref{ITO2},
          conditioned on $E_t$, 
          can be regarded as usual stochastic integrals driven simply by Lebesgue measure, $Z$ and $[Z,Z]$.  \label{ITO23}
     \end{enumerate}
\end{em}
\end{Rems}
     
     The following example provides a sense of the kinds of results that can be obtained using Theorem \ref{Thm ITO} 
     together with conditioning.  
     
\begin{Ex}\label{Ex FP}\par
\begin{em}
     Let $D$ be an $\cF$-stable subordinator of index $\beta\in(0,1)$ which is 
     independent of a standard $\cF$-Brownian motion $Z=B$.  
     The process $D$ is strictly increasing. 
     Let $E$ be the associated continuous time-change so that \DttoE.
     Then under a certain condition, 
     the transition probability density 
     $p^X(t,y)\equiv p^X(t,y|0,x)$
     of a solution $X$ to the SDE
     \begin{align}
          dX_t= \mu(X_t)dE_t+\sigma(X_t)dB_{E_t} \ \ \ \textrm{with} \ X_0=x \label{FP1}
     \end{align}
     satisfies the following time-fractional PDE in the weak sense:
     \begin{align}
          {\bf D}_\ast^\beta p^X(t,y) = -\dfrac\partial {\partial y}\bigl\{ \mu(y) p^X(t,y) \bigr\}
               + \dfrac 12 \dfrac{\partial^2} {\partial y^2} \bigl\{\sigma^2(y) p^X(t,y) \bigr\}, \label{FP2}
     \end{align}
     with initial condition $p^X(0,y)=\delta_x(y)$.  
     Here, ${\bf D}_\ast^\beta$ is the Caputo fractional derivative
     of order $\beta$ with respect to the time variable $t$ (see \cite{G-M}), 
     and $\delta_x$ is the Dirac delta function with mass at $x$.  
     For the proof, see Hahn, Kobayashi and Umarov~\cite[Thm.\ 4.1]{HKU}. 
     Furthermore, that paper provides a more general perspective on this matter in the framework of 
     time-changed L\'evy processes and their associated pseudo-differential equations, with the time-change being the first hitting time process of a mixture of independent stable subordinators. 
     Moreover, the above result is derived there without the use of Theorem \ref{Thm ITO}, 
     but based on Theorem \ref{Thm SDE-duality} of the present paper.  
     The advantage of the approach which employs 
     the time-changed It\^o formula \eqref{ITO3}
     is that it reveals the connection 
     between the stochastic calculus for a time-changed Brownian motion and 
     its associated time-fractional PDE \eqref{FP2}. 
     A further remark on Equations \eqref{FP1} and \eqref{FP2} is provided in this paper in Example \ref{Ex LSDE-8}. \qed 
\end{em}
\end{Ex}

\begin{Rem}
\begin{em}
   With regards to Example \ref{Ex FP}, it is possible to discuss SDEs and associated time-fractional PDEs \textit{with smooth boundary conditions}.  
Time-fractional PDEs with Dirichlet boundary conditions are treated in \cite{MNV}, but without specifying the connection to SDEs of the form \eqref{FP1}.   
\end{em}
\end{Rem}

\section{SDEs Including Terms Driven by a Time-changed Semimartingale} \label{sec LSDE}

     A classical It\^o SDE is of the form 
     \begin{align*}
          dY_t=b(t,Y_t)dt+\tau(t,Y_t)dB_t
     \end{align*} 
     where $B$ is a standard Brownian motion.   
     As stated in Remark \ref{Rems ITO} (\ref{ITO23}),
     the 2nd change-of-variable formula \eqref{COV1}
     is a useful tool in handling a larger class of SDEs of the form
     \begin{align*}
          dX_t=\rho(t,E_t,X_t)dt+\mu(t,E_t,X_t)dE_t+\sigma(t,E_t,X_t)dB_{E_t},
     \end{align*}
     where $E$ is a continuous time-change. 
     Note that the sample path $t\mapsto E_t$ is not necessarily absolutely continuous with respect to Lebesgue measure; 
     hence, the $dE_t$ term appearing above in general cannot be rewritten in terms of $dt$. 
     For example, if $E$ is the first hitting time process of 
     a stable subordinator $D$ of index between $0$ and $1$,
     then the sample path $t\mapsto E_t$ is flat almost everywhere.  Therefore, if $E_t$ had a representation $E_t=\int_0^t g(s)ds$
     for some integrable function $g$, then it would follow that $E_t=0$ for all $t\geq 0$, contradicting the fact that 
     $\lim_{t\to\infty}E_t=\infty$. 
     More generally, if $E$ is the first hitting time process of a strictly increasing L\'evy process 
     with infinite jumps and no drift, then $E$ is not absolutely continuous with respect to Lebesgue measure.  
     For definition and properties of L\'evy processes, consult \cite{Applebaum} or \cite{Sato}.
     
     The new feature of this larger class of SDEs is the coexistence of a usual drift term 
     along with a term representing a 
     factor ascribed to the time-change.  
     The aim of this section is to provide ways of recognizing this new larger class of SDEs by analyzing
     their solutions and making comparisons between the two classes of SDEs. 
     For a general treatment of classical It\^o SDEs, see \cite{Karatzas-S} or \cite{Oksendal}. 
     Regarding methods for obtaining explicit forms of solutions to classical It\^o SDEs, consult \cite[Chap.\ 4]{Gard}.   
     Many basic models are introduced in \cite{Steele} with an abundance of interpretations and insights.     

     Let $Z$ be an $\cF$-semimartingale and let $E$ be a continuous $\cF$-time-change.  
     The general form of SDEs discussed here is 
     \begin{multline}
          dX_t=\rho(t,E_t,X_{t-})dt+\mu(t,E_t,X_{t-})dE_t+\sigma(t,E_t,X_{t-})dZ_{E_t}
               \ \ \textrm{with} \ X_0=x_0, \label{SDE1}
     \end{multline}
     which is understood in the following integral form: 
     \begin{align}
          X_t=x_0+\int_0^t \rho(s,E_s ,X_{s-})ds+\int_0^t \mu(s,E_s ,X_{s-})dE_s \label{SDE1.5}
                 + \int_0^t \sigma(s,E_s ,X_{s-})dZ_{E_s},
     \end{align}
     where $x_0$ is a real constant, and $\rho$, $\mu$, $\sigma$ are real-valued functions, defined on 
     $\bbR_+\ttimes \bbR_+ \ttimes \bbR$, which 
     satisfy the following \textit{Lipschitz condition}: 
    there exists a positive constant $L$ such that 
     \begin{align}
          |\rho(t,u,x)-\rho(t,u,y)|+ |\mu(t,u,x)-\mu(t,u,y)| \label{SDE2}
            + |\sigma(t,u,x)-\sigma(t,u,y)|\leq L|x-y|
     \end{align}
     for all $t, u\in\bbR_+$ and $x, y\in\bbR$.  
     For technical reasons, we require
     {\it assumption} 
     \begin{align}
          \label{SDE2.5} X\in \bbD\cG \Longrightarrow   
                \bigl(\rho(t,E_t,X_{t-})\bigr),\, \bigl(\mu(t,E_t,X_{t-})\bigr),\, \bigl(\sigma(t,E_t,X_{t-})\bigr) \in \bbL\cG, 
     \end{align}
     where $\mathcal{G}_t\DEF \mathcal{F}_{E_t}$.   
     One example of such functions is a `linear' map 
     $\rho(t,u,x)=\rho_1(t,u)+\rho_2(t,u)\cdot x$, 
     where $\rho_1$, $\rho_2$ are bounded continuous functions on $\bbR_+\ttimes\bbR_+$. 
     
\begin{Lem}\label{Lem SDE-existence}\par \hspace{-2mm}
     \textbf{\emph{(Existence and Uniqueness of Solution)}} \
     Let $Z$ be an $\cF$-semimartingale.  Let $D$ and $E$ be a pair satisfying \DttoE\ or \EttoD. 
     Suppose $\rho$, $\mu$, $\sigma$ are real-valued functions defined 
     on $\bbR_+\ttimes \bbR_+ \ttimes \bbR$ 
     satisfying Lipschitz condition \eqref{SDE2} and assumption \eqref{SDE2.5}. 
     Then there exists a unique $\cG$-semimartingale $X$ for which \eqref{SDE1} holds, where $\mathcal{G}_t\DEF \mathcal{F}_{E_t}$. 
\end{Lem}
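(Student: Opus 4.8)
The plan is to rewrite the integral equation \eqref{SDE1.5} as a fixed-point problem on the space $\bbD\cG$ of c\`adl\`ag $\cG$-adapted processes and then invoke (or reproduce) the classical existence-and-uniqueness theory for stochastic differential equations driven by semimartingales with Lipschitz coefficients. First I would check that each of the three drivers in \eqref{SDE1.5} is a $\cG$-semimartingale: the identity map $m$ is a continuous finite-variation process; $E$ is continuous and nondecreasing, hence of finite variation on compacts, hence a $\cG$-semimartingale; and $Z\circ E$ is a $\cG$-semimartingale by Lemma \ref{Lem JACOD1}. Next, for any candidate $X\in\bbD\cG$, assumption \eqref{SDE2.5} puts the coefficient processes
$\rho^X_t\DEF\rho(t,E_t,X_{t-})$, $\mu^X_t\DEF\mu(t,E_t,X_{t-})$, $\sigma^X_t\DEF\sigma(t,E_t,X_{t-})$ in $\bbL\cG$; being c\`agl\`ad and $\cG$-adapted they are $\cG$-predictable and locally bounded, hence integrable against each driver. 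Therefore the map
\[
   \Phi(X)\DEF x_0+\rho^X\bullet m+\mu^X\bullet E+\sigma^X\bullet(Z\circ E)
\]
sends $\bbD\cG$ into $\bbD\cG$ by Property \ref{Properties} (\ref{*1}), and $X$ solves \eqref{SDE1.5} if and only if $\Phi(X)=X$.

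To produce and identify such a fixed point I would observe that the coefficient operators $X\mapsto\rho^X$, $X\mapsto\mu^X$, $X\mapsto\sigma^X$ are non-anticipating (they depend on $X$ only through the left limit $X_{t-}$) and Lipschitz in the sense that, for $X,Y\in\bbD\cG$,
\[
   |\rho^X_t-\rho^Y_t|+|\mu^X_t-\mu^Y_t|+|\sigma^X_t-\sigma^Y_t|\le L\,|X_{t-}-Y_{t-}|\le L\sup_{s\le t}|X_s-Y_s|
\]
by \eqref{SDE2}; this is exactly the ``process/functional Lipschitz'' structure under which SDEs driven by semimartingales are known to have a unique solution (see \cite[Chap.\ V]{Protter}). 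Applying that theory to the three-driver system above yields a process $X\in\bbD\cG$, unique up to indistinguishability, with $\Phi(X)=X$. Since this $X$ equals $x_0$ plus stochastic integrals of predictable processes against $\cG$-semimartingales, it is itself a $\cG$-semimartingale by Property \ref{Properties} (\ref{*1}), which is the claim.

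If one prefers a self-contained argument, the appeal to \cite{Protter} should be replaced by a direct Picard iteration: set $X^0\equiv x_0$, $X^{n+1}\DEF\Phi(X^n)$, reduce all three drivers to the class $\HH$ of square-integrable semimartingales by a localizing sequence of $\cG$-stopping times (legitimate because the integrands are locally bounded), and combine the $L^2$ maximal inequality for stochastic integrals with Gronwall's lemma to show that $(X^n)$ is locally Cauchy in the uniform-$L^2$ sense, the limit being the solution; uniqueness follows from the same estimate. The main obstacle in either route is not analytic but structural: one must be sure that the substituted processes $\rho^X,\mu^X,\sigma^X$ are admissible integrands for $m$, $E$ and $Z\circ E$ simultaneously, and this is precisely what hypothesis \eqref{SDE2.5} is tailored to guarantee; the only other point needing care, in the Picard version, is the localization bookkeeping, since $E$ and $Z\circ E$ need not be globally square-integrable.
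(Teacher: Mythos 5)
Your proposal is correct and follows essentially the same route as the paper: verify that $m$, $E$ and $Z\circ E$ are $\cG$-semimartingales (via Lemma \ref{Lem JACOD1}), note that \eqref{SDE2} and \eqref{SDE2.5} make the coefficient maps functional-Lipschitz operators from $\bbD\cG$ to $\bbL\cG$, invoke Theorem 7 of \cite[Chap.\ V]{Protter}, and conclude the semimartingale property from Property \ref{Properties} (\ref{*1}). The added Picard-iteration sketch is a fine self-contained alternative but is not needed; the paper simply cites Protter's theorem.
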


\begin{proof}   
     The identity map $m$ corresponding to Lebesgue measure can be regarded as a $\cG$-semimartingale.   
     Moreover, $E$ and $Z\circ E$ are also $\cG$-semimartingales due to Lemma \ref{Lem JACOD1}. 
     The existence and uniqueness of a strong solution $X$ to SDE \eqref{SDE1} is guaranteed by 
     conditions \eqref{SDE2} and \eqref{SDE2.5}, upon reformulating Theorem 7 of \cite[Chap.\ V]{Protter}  
     with operators $F_j:\bbD\cG\longrightarrow \bbL\cG$ $(j=1,2,3)$ defined by
     \begin{align*}
          F_1(X)_t=\rho(t,E_t,X_{t-}), \ \ \  F_2(X)_t=\mu(t,E_t,X_{t-}), \ \ \  F_3(X)_t=\sigma(t,E_t,X_{t-}).
     \end{align*} 
     Furthermore, it follows from Property \ref{Properties} (\ref{*1}) and the integral expression \eqref{SDE1.5} 
     that $X$ is a $\cG$-semimartingale.  
\end{proof}

     Now that the existence and uniqueness of a solution to an SDE of the form \eqref{SDE1} is established, 
     the following two SDEs both make sense:  
     \begin{align}
          dX_t&=\mu(E_t,X_{t-})dE_t+\sigma(E_t,X_{t-})dZ_{E_t} \ \ \ \textrm{with} \ X_0= x_0; \label{SDE3}\\
          dY_t&=\mu(t,Y_{t-})dt+\sigma(t,Y_{t-})dZ_t \ \ \ \textrm{with} \ Y_0= x_0.  \label{SDE4}
     \end{align}
     Together the change-of-variable formulas \eqref{JACOD2} and \eqref{COV1}  
     yield Theorem \ref{Thm SDE-duality}, which in turn
     reveals a close connection between the classical It\^o-type SDE \eqref{SDE4} and our new class of SDEs in \eqref{SDE3}.  
   
\begin{Thm}\label{Thm SDE-duality}\hspace{-2mm}
     \textbf{\emph{(Duality of SDEs)}} \
     Let $Z$ be an $\cF$-semimartingale.   Let $D$ and $E$ be a pair satisfying \DttoE\ or \EttoD. \vspace{1mm} \\
          (1) \hspace{1pt} If a process $Y$ satisfies SDE \eqref{SDE4}, then $X\DEF Y\circ E$ satisfies SDE \eqref{SDE3}.\\
          (2) \hspace{1pt} If a process $X$ satisfies SDE \eqref{SDE3}, then $Y\DEF X\circ D$ satisfies SDE \eqref{SDE4}. 
\end{Thm}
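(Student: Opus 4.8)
The plan is to prove the two assertions by direct substitution, using the change-of-variable formulas \eqref{JACOD2} and \eqref{COV1} to move freely between integrals driven by $Z$ and integrals driven by $Z\circ E$, while exploiting that under \DttoE\ or \EttoD\ the time-change $E$ is continuous (so every process is automatically in synchronization with $E$, by Remark \ref{Rems COV} (\ref{COV3})) and $D$ is strictly increasing. For part (1), I would start from the integral form of \eqref{SDE4}, namely $Y_t = x_0 + \int_0^t \mu(s,Y_{s-})ds + \int_0^t \sigma(s,Y_{s-})dZ_s$, and compose with $E$: evaluate both sides at $t\mapsto E_t$. The left side becomes $X_t = Y_{E_t}$. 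On the right side, the Lebesgue integral $\int_0^{E_t}\mu(s,Y_{s-})ds$ and the stochastic integral $\int_0^{E_t}\sigma(s,Y_{s-})dZ_s$ must each be rewritten as integrals running over the original clock $t$.

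For the stochastic integral term, I would apply the 1st change-of-variable formula \eqref{JACOD2} (Lemma \ref{Lem JACOD2}) with the roles played by $Z$, the time-change $E$, and the integrand $H_s = \sigma(s,Y_{s-})$; this gives $\int_0^{E_t}\sigma(s,Y_{s-})dZ_s = \int_0^t \sigma(E_{s-},Y_{E(s-)-})dZ_{E_s}$. Because $E$ is continuous, $E_{s-}=E_s$ and one can also rewrite $Y_{E(s-)-}$ in terms of $X_{s-}$: continuity of $E$ together with $X = Y\circ E$ gives $X_{s-} = Y_{E_{s-}-}$ (care is needed here, but the continuity of $E$ makes $Y_{E(s-)-} = Y_{E_s-}$, matching $X_{s-}$ up to the usual left-limit bookkeeping), so the integrand becomes $\sigma(E_s, X_{s-})$, yielding exactly the $\sigma$-term of \eqref{SDE3}. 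For the Lebesgue term, the same reasoning via \eqref{JACOD2} applied with $Z$ replaced by the identity process $m$ — which is trivially a semimartingale and, being continuous, is in synchronization with $E$ — converts $\int_0^{E_t}\mu(s,Y_{s-})ds$ into $\int_0^t \mu(E_s, X_{s-})dE_s$, which is the $\mu$-term of \eqref{SDE3}. Assembling the three pieces shows $X$ satisfies the integral form of \eqref{SDE3}.

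For part (2), I would run the argument in the reverse direction: start from the integral form of \eqref{SDE3} for $X$, compose with $D$ (using \EttoD / \DttoE, so that $E\circ D = m$, i.e.\ $E_{D_t}=t$, which is the key identity relating $D$ and $E$), and use the 2nd change-of-variable formula \eqref{COV1} to push each integral over the new clock back to the original clock. Concretely, $Y_t := X_{D_t}$; the term $\int_0^{E_t}\cdots$ appearing implicitly when $\int_0^t \sigma(E_s,X_{s-})dZ_{E_s}$ is unwound by \eqref{COV1} has $S = D$ as the first hitting time process of $E$, giving $\int_0^t \sigma(E_s,X_{s-})dZ_{E_s} = \int_0^{E_t}\sigma(E_{D(s-)}, X_{D(s-)-})dZ_s$, and then evaluating at $t\mapsto D_t$ collapses $E_{D_t}=t$ and $X_{D(s-)-}$ to $Y_{s-}$, recovering $\int_0^t \sigma(s,Y_{s-})dZ_s$; similarly for the $\mu$-term with $m$ in place of $Z$. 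One must also check $X_0 = x_0 \Rightarrow Y_0 = x_0$, which is immediate since $D_0 = 0$.

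The main obstacle I anticipate is the careful handling of left limits and the composition bookkeeping when $D$ has jumps (it is only strictly increasing, not continuous): one must verify that $Y_{E(s-)-}$ really equals $X_{s-}$ and that $X_{D(s-)-}$ really equals $Y_{s-}$, including at the random times where $D$ jumps — precisely the points where $E$ is flat. The correct statements follow from the relations $E_{D_t}=t$ and $D_{E_t-}\le t\le D_{E_t}$ together with the continuity of $E$, but spelling this out rigorously (so that the left-continuous integrands match on both sides) is the delicate step; everything else is a mechanical application of Lemmas \ref{Lem JACOD2} and \ref{Thm COV} term by term, plus the observation that the identity process $m$ is a continuous semimartingale to which both change-of-variable formulas apply.
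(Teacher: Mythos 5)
Your overall strategy is exactly the paper's: part (1) applies the 1st change-of-variable formula \eqref{JACOD2} to each integral in the integral form of \eqref{SDE4} (with the identity process $m$ handling the Lebesgue term), and part (2) applies the 2nd change-of-variable formula \eqref{COV1} and then evaluates at $t\mapsto D_t$ using $E_{D_t}=t$. Part (2) as you describe it is complete; in particular the observation that strict monotonicity of $D$ gives $X_{D(s-)-}=(X\circ D)_{s-}$ is the same one the paper uses.

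The one genuine gap is in part (1), at precisely the step you flag as delicate: the identity $Y_{E(s)-}=X_{s-}$ that you propose to verify is false in general, and the relations $E_{D_t}=t$ and $D_{E_t-}\le t\le D_{E_t}$ will not rescue it. If $E$ is constant on an interval $[s-\varepsilon,s]$ (which happens exactly when $D$ jumps over the level $E_s$), then $X_{s-}=\lim_{u\uparrow s}Y_{E_u}=Y_{E_s}$, whereas $Y_{E(s)-}=Y_{E_s-}$, and these differ whenever $Y$ jumps at $E_s$. The correct repair --- and what the paper actually does --- is not to prove the identity but to note that the set of times $s$ where it can fail is contained in the set where the integrators $E$ and $Z\circ E$ are themselves constant, so replacing $Y_{E(s)-}$ by $(Y\circ E)_{s-}$ in the integrands changes the value of neither $\int_0^t\cdots\, dE_s$ nor $\int_0^t\cdots\, dZ_{E_s}$. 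With that substitution the argument closes; everything else in your proposal matches the paper's proof.
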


\begin{proof}   
     (1) Suppose $Y$ satisfies SDE \eqref{SDE4}, and let $X\DEF Y\circ E$.
     Since any process is in synchronization with the continuous $\cF$-time-change $E$, 
     the 1st change-of-variable formula \eqref{JACOD2} yields
     \begin{align}
          X_t&=x_0+\int_0^{E_t}\mu(s,Y_{s-})ds+\int_0^{E_t}\sigma(s,Y_{s-})dZ_s \label{SDE5}\\
             &=x_0+\int_0^t \mu\bigl(E_s,Y_{E(s)-}\bigr)dE_{s}+\int_0^t\sigma\bigl(E_s,Y_{E(s)-}\bigr)dZ_{E_s}. \notag
     \end{align}
     In general, the equality $Y_{E(s)-}=(Y\circ E)_{s-}$ may fail.  
     The failure can occur only when $E$ is constant on some interval $[s-\varepsilon,s]$ with $\varepsilon>0$.
     However, the integrators $E$ and $Z\circ E$ on the right hand side of \eqref{SDE5} are constant on this interval; 
     hence, the difference between the two values $Y_{E(s)-}$ and $(Y\circ E)_{s-}$ does not affect the value of the integrals.  
     Thus, \eqref{SDE5} can be reexpressed as
     \begin{equation}
          X_t=x_0+\int_0^t \mu\bigl(E_s,(Y\circ E)_{s-}\bigr)dE_s+\int_0^t\sigma\bigl(E_s,(Y\circ E)_{s-}\bigr)dZ_{E_s}, 
     \end{equation}
     thereby yielding SDE \eqref{SDE3}.  

     (2) Next, suppose $X$ satisfying SDE \eqref{SDE3} is given.  Since $D$ is strictly increasing, 
     $X_{D(s-)-}=(X\circ D)_{s-}$ for any $s>0$.  
     Again, since any process is in synchronization with the continuous $\cF$-time-change $E$, 
     the 2nd change-of-variable formula \eqref{COV1} applied to the integral form of SDE \eqref{SDE3} yields 
     \begin{align}
          X_t&=x_0+\int_0^{E_t} \mu\bigl(E_{D(s-)},X_{D(s-)-}\bigr)ds+\int_0^{E_t}\sigma\bigl(E_{D(s-)},X_{D(s-)-}\bigr)dZ_s \label{SDE6}\\
              &=x_0+\int_0^{E_t} \mu\bigl(s,(X\circ D)_{s-}\bigr)ds+\int_0^{E_t}\sigma\bigl(s,(X\circ D)_{s-}\bigr)dZ_s. \notag
     \end{align} 
     Let $Y\DEF X\circ D$, then \eqref{SDE6} immediately yields SDE \eqref{SDE4}, which completes the proof.      
\end{proof}

\begin{Rem}\label{Rem SDE-duality}\par
\begin{em}
     One may wonder whether the SDE 
          \begin{align*}
               dX_t=\rho(E_t,X_{t-})dt+\mu(E_t,X_{t-})dE_t+\sigma(E_t,X_{t-})dZ_{E_t} 
                \ \ \ \textrm{with} \ X_0= x_0 
          \end{align*}
     can be reduced in the same manner as Theorem \ref{Thm SDE-duality} (2). 
     This is a question of whether the new driving process $dt$ can be replaced by $dD_{E_t}$, 
     which is possible only in very special cases; e.g., 
     if $D$ is continuous or $\rho(E_t,X_{t-})$ vanishes on every nonempty open interval $(D_{u-},D_u)$. \label{SDE7}
\end{em}
\end{Rem}

     For the remainder of this section, consideration mainly focuses on \textit{linear} SDEs of the form 
     \begin{align}
          dX_t&=\bigl(\rho_1(t,E_t) +\rho_2(t,E_t)X_t\bigr)dt+\bigl(\mu_1(t,E_t)+\mu_2(t,E_t)X_t\bigr)dE_t   \label{SDE21} \\
               & \ \ \ \ +\bigl(\sigma_1(t,E_t)+\sigma_2(t,E_t)X_t\bigr)dB_{E_t}
                \ \ \ \textrm{with} \ X_0=x_0. \notag
     \end{align}
     Here $B$ is a standard $\cF$-Brownian motion, $E$ is a continuous $\cF$-time-change, and   
     $\rho_j$, $\mu_j$, $\sigma_j$ $(j=1,2)$ are real-valued functions on $\bbR_+\ttimes\bbR_+$ satisfying the following conditions:
     \begin{align}
        &|\rho_2(t,u)|+ |\mu_2(t,u)|+ |\sigma_2(t,u)|\leq L \ \ \ \textrm{for all} \ t, u\in\bbR_+, \tag{\ref{SDE2}'}\label{SDE2'}\\
        &\bigl(\rho_j(t,E_t)\bigr),\, \bigl(\mu_j(t,E_t)\bigr),\, \bigl(\sigma_j(t,E_t)\bigr) \in \bbL\cG \ \ \ \textrm{for} \ j=1,2, 
              \tag{\ref{SDE2.5}'}\label{SDE2.5'}
     \end{align}     
     where $L$ is a positive constant and $\mathcal{G}_t\DEF \mathcal{F}_{E_t}$.  
     Note that a strong solution $X$ to SDE \eqref{SDE21}
     always has continuous paths due to the continuity of the driving processes. 
     Conditions \eqref{SDE2'} and \eqref{SDE2.5'} respectively imply conditions \eqref{SDE2} and \eqref{SDE2.5};
     therefore, the uniqueness and existence of the strong solution $X$ is guaranteed by Lemma \ref{Lem SDE-existence}.
     
     As demonstrated in the proof of Theorem \ref{Thm ITO}, we have the handy calculus rules 
     \begin{equation}\begin{cases}
          & \hspace{-2mm}[m,m]=[m,E]=[m,B\circ E]=[E,E]=[E,B\circ E]=0, \label{SDE20}\\  
          & \hspace{-2mm}[B\circ E,B\circ E]=E, 
     \end{cases}\end{equation}
     where $m$ denotes the identity map corresponding to Lebesgue measure. 
     Remark \ref{Rem SDE-duality} implies that the simple substitution $Y_t\DEF X_{D_t}$ fails to
     reduce even the most basic type of SDE \eqref{SDE21} into a classical It\^o SDE
     due to the presence of the $dt$ term.   
     This observation suggests that we establish a general form of solution to \eqref{SDE21} 
     via a direct approach rather than via such a simple substitution. 
     It also calls into question the possibility of developing reduction schemes 
     for converting SDEs of the form \eqref{SDE21} into less complicated SDEs.   
     Propositions \ref{Prop LSDE-2} and \ref{Prop Reduction} together with Theorem \ref{Thm LSDE-3} largely settle this issue.  
     The linear SDE \eqref{SDE21} is said to be \textit{homogeneous} if $\rho_1=\mu_1=\sigma_1\equiv 0$.  

\begin{Prop}\label{Prop LSDE-2}\par \hspace{-2mm}
     \textbf{\emph{(Solution Form for Homogeneous Linear SDEs)}} \
     Let $B$ be a standard $\cF$-Brownian motion.  Let $D$ and $E$ be a pair satisfying \DttoE\ or \EttoD. 
     Then the unique strong solution to the homogeneous linear SDE with initial condition
     \begin{equation}
          dX_t=\rho_2(t,E_t) X_tdt+\mu_2(t,E_t)X_tdE_t+\sigma_2(t,E_t)X_tdB_{E_t} \ \ \ \textrm{with} \ X_0= x_0 \label{SDE22} 
     \end{equation}
     is explicitly written as 
     \begin{align}
          X_t=x_0 &\exp\Biggl\{ \int_0^t \rho_2(s,E_s)ds+\int_0^t \Bigl( \mu_2(s,E_s)-\dfrac 12 \sigma_2^2(s,E_s) \Bigr) dE_s  \label{SDE23}\\
              +&\int_0^t \sigma_2(s,E_s) dB_{E_s}\Biggr\}, \notag
           \end{align}
     or equivalently as
     \begin{align}
          X_t=x_0 &\exp\Biggl\{ \int_0^t \rho_2(s,E_s)ds+\int_0^{E_t} \Bigl( \mu_2(D_{s-},s)-\dfrac 12 \sigma_2^2(D_{s-},s)\Bigr) ds \label{SDE24}\\
                     +&\int_0^{E_t} \sigma_2(D_{s-},s) dB_s\Biggr\}.  \notag
     \end{align}
\end{Prop}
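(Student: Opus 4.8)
The plan is to obtain \eqref{SDE23} by writing down the natural candidate (the exponential of a semimartingale built from the three integrands, exactly as in the classical linear case) and then verifying directly, via the time-changed It\^o formula of Theorem \ref{Thm ITO}, that it solves \eqref{SDE22}; uniqueness is then automatic. As noted around Remark \ref{Rem SDE-duality}, a substitution of the type $Y_t=X_{D_t}$ cannot reduce \eqref{SDE22} to a classical It\^o SDE because of the $dt$ term, so this direct route seems unavoidable. First I would record that, under the standing assumptions \eqref{SDE2'} and \eqref{SDE2.5'}, the maps $(t,u,x)\mapsto \rho_2(t,u)x,\ \mu_2(t,u)x,\ \sigma_2(t,u)x$ satisfy \eqref{SDE2} and \eqref{SDE2.5}, so Lemma \ref{Lem SDE-existence} provides a unique $\cG$-semimartingale solution of \eqref{SDE22}; hence it suffices to check that the right-hand side of \eqref{SDE23} is a solution.

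Next I would set $U_t$ to be the argument of the exponential in \eqref{SDE23}, so that $U=A\bullet m + F\bullet E + G\bullet(B\circ E)$ with $A_s=\rho_2(s,E_s)$, $F_s=\mu_2(s,E_s)-\tfrac12\sigma_2^2(s,E_s)$, $G_s=\sigma_2(s,E_s)$; by \eqref{SDE2.5'} these lie in the integrand classes required by Theorem \ref{Thm ITO}, and $U$, hence $X_t:=x_0 e^{U_t}$, is continuous. Applying the Brownian form \eqref{ITO3} with $f(x)=x_0 e^x$, and using $f=f'=f''$ so that $f'(U_{D(s-)})=f''(U_{D(s-)})=X_{D(s-)}$, yields
\[
X_t-x_0=\int_0^t X_s A_s\,ds+\int_0^{E_t} X_{D(s-)}\Bigl(F_{D(s-)}+\tfrac12\bigl\{G_{D(s-)}\bigr\}^2\Bigr)\,ds+\int_0^{E_t} X_{D(s-)} G_{D(s-)}\,dB_s .
\]
Because $D$ is strictly increasing, $E_{D(s-)}=s$ (as already used in the proof of Theorem \ref{Thm SDE-duality}), so $F_{D(s-)}=\mu_2(D_{s-},s)-\tfrac12\sigma_2^2(D_{s-},s)$ and $\{G_{D(s-)}\}^2=\sigma_2^2(D_{s-},s)$; thus the It\^o correction exactly cancels the $-\tfrac12\sigma_2^2$ in $F$, leaving the integrand $X_{D(s-)}\mu_2(D_{s-},s)$ in the second term. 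Finally I would apply the 2nd change-of-variable formula \eqref{COV1}, in the direction $\int_0^{E_t}K_{D(s-)}\,dZ_s=\int_0^t K_s\,dZ_{E_s}$ --- legitimate since every semimartingale is in synchronization with the continuous $E$ (Remark \ref{Rems COV} (\ref{COV3})) --- with $Z=m$, $K_s=X_s\mu_2(s,E_s)$ for the second integral and $Z=B$, $K_s=X_s\sigma_2(s,E_s)$ for the third, turning the display into $X_t=x_0+\int_0^t \rho_2(s,E_s)X_s\,ds+\int_0^t \mu_2(s,E_s)X_s\,dE_s+\int_0^t \sigma_2(s,E_s)X_s\,dB_{E_s}$, i.e.\ the integral form of \eqref{SDE22}; uniqueness from Lemma \ref{Lem SDE-existence} then identifies it with the solution. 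The equivalence of \eqref{SDE23} and \eqref{SDE24} is the same computation run once: apply \eqref{COV1} to the two stochastic integrals $\int_0^t(\mu_2(s,E_s)-\tfrac12\sigma_2^2(s,E_s))\,dE_s$ and $\int_0^t \sigma_2(s,E_s)\,dB_{E_s}$ in the exponent.

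The only real difficulty is the bookkeeping with the two clocks: verifying $E_{D(s-)}=s$, checking that $A$, $F$, $G$ and the auxiliary integrands $K$ used in \eqref{COV1} indeed lie in the appropriate predictable integrand classes, and invoking the continuity of $X$ to pass from the $X_{D(s-)-}$ of \eqref{ITO2} to the $X_{D(s-)}$ of \eqref{ITO3}. None of this is deep, but it is exactly where a sign or an index slip would do the most damage.
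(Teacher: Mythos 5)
Your proposal is correct, and its overall strategy coincides with the paper's: write down the exponential candidate, verify by an It\^o computation that it satisfies \eqref{SDE22}, and invoke the uniqueness from Lemma \ref{Lem SDE-existence}; the equivalence of \eqref{SDE23} and \eqref{SDE24} is the 2nd change-of-variable formula in both cases. Where you differ is in how the It\^o step is executed. The paper applies the classical one-dimensional It\^o formula \eqref{ItoFormula1} directly to $A_t$ (the exponent), computing $[A,A]=\sigma_2^2(\cdot,E)\bullet E$ from the bracket rules \eqref{SDE20}, so the whole verification stays on the original clock $t$ and the cancellation of the $-\tfrac12\sigma_2^2$ term happens in the $dE_t$ integral. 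You instead invoke the time-changed It\^o formula \eqref{ITO3}, which pushes everything onto the $E$-clock, perform the cancellation there using $E_{D(s-)}=s$, and then apply \eqref{COV1} in reverse to return to integrals against $dE_t$ and $dB_{E_t}$. Your route is a round trip through the new clock that the paper avoids, so it is slightly longer and requires the extra bookkeeping you flag (identifying the processes $K$ with $K_{D(s-)}$ equal to the integrands produced by \eqref{ITO3}, and checking they lie in the right integrand classes); on the other hand it reuses Theorem \ref{Thm ITO} as a black box and never touches quadratic variations explicitly. Both arguments are sound, and all the delicate identities you single out ($E_{D(s-)}=s$ for strictly increasing $D$, continuity of $X$ allowing $X_{D(s-)-}$ to be replaced by $X_{D(s-)}$) do hold under the standing hypothesis \DttoE\ or \EttoD.
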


\begin{proof}
     \eqref{SDE24} follows from \eqref{SDE23} together with the 2nd change-of-variable formula \eqref{COV1}.
     Due to the uniqueness of the solution, it suffices to show that the process $X$ given in \eqref{SDE23} 
     satisfies SDE \eqref{SDE22}.  

     Let $X$ be the process in \eqref{SDE23} and write $X_t=x_0 \, e^{A_t}$.
     A calculation similar to \eqref{ITO15}, via \eqref{SDE20}, yields $[A,A]=\sigma_2^2(\cdot,E) \bullet E$.
     By the It\^o formula \eqref{ItoFormula1} with $f(a)=x_0 \, e^a$, 
     \begin{align}
          dX_t&= x_0\, e^{A_t}dA_t+\tfrac 12 x_0\, e^{A_t} d[A,A]_t \label{SDE25} \\
              &= X_t\bigl\{\rho_2(t,E_t)dt+\bigl(\mu_2(t,E_t)-\tfrac 12 \sigma_2^2(t,E_t) \bigr)dE_t+\sigma_2(t,E_t)dB_{E_t}\bigr\}\notag \\
               & \ \ \ \ +\tfrac 12 X_t \sigma_2^2(t,E_t)dE_t \notag\\
              &= \rho_2(t,E_t) X_tdt+\mu_2(t,E_t)X_tdE_t+\sigma_2(t,E_t)X_tdB_{E_t}.\notag
     \end{align}
     In addition, $X_0=x_0$.  Thus, $X$ satisfies \eqref{SDE22}, completing the proof.
\end{proof}

\begin{Thm}\label{Thm LSDE-3}\par \hspace{-2mm}
     \textbf{\emph{(General Solution Form for Linear SDEs)}} \
     Let $B$ be a standard $\cF$-Brownian motion and $D$ and $E$ be a pair satisfying \DttoE\ or \EttoD. 
     Then the unique strong solution to a general linear SDE \eqref{SDE21} is explicitly written as 
     \begin{align}\label{SDE31}
          X_t=\ &\Phi_t \Biggl[ x_0+\int_0^t \dfrac{\rho_1(s,E_s)}{\Phi_s} ds\\
                                           &+\int_0^t \dfrac{ \mu_1(s,E_s)-\sigma_2(s,E_s)\sigma_1(s,E_s)} {\Phi_s} dE_s
                                           +\int_0^t \dfrac{\sigma_1(s,E_s)} {\Phi_s} dB_{E_s} \Biggr], \notag
     \end{align}
     or equivalently as
     \begin{align}\label{SDE32}
          X_t=\ &\Phi_t \Biggl[ x_0+\int_0^t \dfrac{\rho_1(s,E_s)}{\Phi_s}  ds\\
                                           &+\int_0^{E_t} \dfrac{ \mu_1(D_{s-},s)-\sigma_2(D_{s-},s)\sigma_1(D_{s-},s)} {\Phi_{D(s-)}} ds
                                           +\int_0^{E_t} \dfrac{\sigma_1(D_{s-},s)} {\Phi_{D(s-)}}dB_s \Biggr], \notag
     \end{align}
     where 
     $\Phi$ is the unique strong solution \eqref{SDE23} to the homogeneous linear SDE \eqref{SDE22} with $x_0$ replaced by $1$. 
     $\Phi$ is called the \textit{fundamental solution} to the homogeneous SDE \eqref{SDE22}.  
\end{Thm}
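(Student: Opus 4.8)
The plan is the stochastic analogue of the classical variation-of-parameters method. Since conditions \eqref{SDE2'} and \eqref{SDE2.5'} imply conditions \eqref{SDE2} and \eqref{SDE2.5}, Lemma \ref{Lem SDE-existence} guarantees that SDE \eqref{SDE21} has a unique strong solution; hence it suffices to verify that the process $X$ given by \eqref{SDE31} solves \eqref{SDE21}. Formula \eqref{SDE32} will then follow from \eqref{SDE31} by the 2nd change-of-variable formula \eqref{COV1}, exactly as \eqref{SDE24} was obtained from \eqref{SDE23} in the proof of Proposition \ref{Prop LSDE-2}.

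First I would record what is known about the fundamental solution. By Proposition \ref{Prop LSDE-2} with $x_0$ replaced by $1$, $\Phi$ is the exponential $e^{A}$ of the exponent $A$ appearing in \eqref{SDE23}; in particular $\Phi$ is continuous, $\cG$-adapted and strictly positive, so $1/\Phi$ is continuous, $\cG$-adapted and locally bounded, and together with \eqref{SDE2.5'} every stochastic integral in \eqref{SDE31} is well defined and $X$ is a continuous $\cG$-semimartingale. From the computation \eqref{SDE25}, $\Phi$ satisfies the homogeneous SDE, i.e.\ $d\Phi_t=\Phi_t\bigl(\rho_2(t,E_t)\,dt+\mu_2(t,E_t)\,dE_t+\sigma_2(t,E_t)\,dB_{E_t}\bigr)$. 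Writing $C_t$ for the bracketed factor in \eqref{SDE31}, one has $C_0=x_0$, $\Phi_0=1$, and $dC_t=\dfrac{\rho_1(t,E_t)}{\Phi_t}\,dt+\dfrac{\mu_1(t,E_t)-\sigma_2(t,E_t)\sigma_1(t,E_t)}{\Phi_t}\,dE_t+\dfrac{\sigma_1(t,E_t)}{\Phi_t}\,dB_{E_t}$.

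The heart of the argument is then to apply the product rule \eqref{ItoFormula2} to $X=\Phi\,C$ in its non-centered form $Y_tZ_t-Y_0Z_0=\int_0^tY_{s-}\,dZ_s+\int_0^tZ_{s-}\,dY_s+[Y,Z]_t$. Since $\Phi$ and $C$ are continuous with $\Phi_0=1$, $C_0=x_0$, this gives $X_t=x_0+\int_0^t\Phi_s\,dC_s+\int_0^tC_s\,d\Phi_s+[\Phi,C]_t$. By Properties \ref{Properties} (\ref{*3}), (\ref{*4}), $\int_0^t\Phi_s\,dC_s$ reproduces the three ``forcing'' integrals $\int_0^t\rho_1\,ds+\int_0^t(\mu_1-\sigma_2\sigma_1)\,dE_s+\int_0^t\sigma_1\,dB_{E_s}$, while $\int_0^tC_s\,d\Phi_s=\int_0^tX_s\rho_2\,ds+\int_0^tX_s\mu_2\,dE_s+\int_0^tX_s\sigma_2\,dB_{E_s}$ because $\Phi_sC_s=X_s$ (arguments $(s,E_s)$ suppressed throughout). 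For the cross-variation I would use bilinearity of $[\cdot,\cdot]$, Property \ref{Properties} (\ref{*5}) and the calculus rules \eqref{SDE20}: of the nine cross-brackets only the one coming from the two $dB_{E}$ integrals survives, giving $[\Phi,C]_t=\int_0^t\Phi_s\sigma_2(s,E_s)\cdot\dfrac{\sigma_1(s,E_s)}{\Phi_s}\,d[B\circ E,B\circ E]_s=\int_0^t\sigma_2(s,E_s)\sigma_1(s,E_s)\,dE_s$. Adding the three pieces, the $-\sigma_2\sigma_1$ term built into $\int\Phi\,dC$ is exactly cancelled by $[\Phi,C]$, and one is left with $X_t=x_0+\int_0^t(\rho_1+\rho_2X_s)\,ds+\int_0^t(\mu_1+\mu_2X_s)\,dE_s+\int_0^t(\sigma_1+\sigma_2X_s)\,dB_{E_s}$, which is precisely the integral form of \eqref{SDE21}.

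I expect the only genuinely delicate point to be the cross-variation term: it is exactly $[\Phi,C]_t=\int_0^t\sigma_2\sigma_1\,dE_s$ that dictates the seemingly ad hoc correction $-\sigma_2\sigma_1$ inside the $dE_t$ coefficients of \eqref{SDE31} and \eqref{SDE32}, and pinning down its sign and magnitude via Property \ref{Properties} (\ref{*5}) and the rules \eqref{SDE20} is where care is required. Everything else --- well-definedness of the integrals, the identity $\Phi_sC_s=X_s$, and replacing left limits by values --- is routine once one invokes the continuity of the driving processes $m$, $E$, $B\circ E$ and conditions \eqref{SDE2'}, \eqref{SDE2.5'}.
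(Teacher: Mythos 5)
Your proposal is correct and follows essentially the same route as the paper's own proof: write $X=\Phi\cdot(\text{bracket})$, apply the product formula \eqref{ItoFormula2}, compute $[\Phi,\cdot]=(\sigma_2\sigma_1)\bullet E$ via \eqref{SDE20} and Property \ref{Properties} (\ref{*5}), and observe that this cross-variation cancels the $-\sigma_2\sigma_1$ correction in the $dE_t$ coefficient. The paper's proof is exactly this computation (with the bracketed factor called $Z$ instead of $C$), and it likewise obtains \eqref{SDE32} from \eqref{SDE31} via the 2nd change-of-variable formula.
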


\begin{proof}   
     Since $\Phi_0=1>0$, the explicit form \eqref{SDE23} of $\Phi$
     shows that $\Phi_t>0$ for all $t\geq 0$.  Hence, the right hand side of \eqref{SDE31} is meaningful.   
     As in the proof of Proposition \ref{Prop LSDE-2}, it is sufficient to check that the process $X$ in \eqref{SDE31} 
     satisfies SDE \eqref{SDE21}.  
     For notational convenience, we suppress the dependence of the coefficients on $E_t$ and simply write 
     $\rho_j(t)=\rho_j(t,E_t)$, $\mu_j(t)=\mu_j(t,E_t)$, $\sigma_j(t)=\sigma_j(t,E_t)$ for $j=1,2$.      
     
     Let $X$ be the process in \eqref{SDE31} and write $X_t=\Phi_tZ_t$.          
     Since $\Phi$ is the solution to SDE \eqref{SDE22}, the calculus rule \eqref{SDE20} yields 
     $[\Phi,Z]= \bigl( \sigma_2 \Phi \cdot (\sigma_1/\Phi ) \bigr) \bullet E =(\sigma_2 \sigma_1) \bullet E$.       
     Hence, using the product formula \eqref{ItoFormula2},      
     \begin{align}
          dX_t=&\ \Phi_t dZ_t+Z_t d\Phi_t +d[\Phi,Z]_t\label{SDE34} \\ 
              = &\ \rho_1(t)dt+\bigl(\mu_1(t)-\sigma_2(t)\sigma_1(t) \bigr)dE_t + \sigma_1(t) dB_{E_t}\notag\\ 
                &+Z_t \bigl( \rho_2(t)\Phi_t dt +\mu_2(t)\Phi_t dE_t+ \sigma_2(t)\Phi_t dB_{E_t}\bigr) +\sigma_2(t)\sigma_1(t)dE_t, \notag
     \end{align}
     the right hand side of which yields that of SDE \eqref{SDE21} upon replacing $\Phi_t Z_t$ by $X_t$. 
     Moreover, $X_0=x_0$, completing the proof. 
\end{proof}  

     A multidimensional version of Theorem \ref{Thm LSDE-3} can be obtained in a similar way
     by applying the It\^o formula componentwise.   

\begin{Cor}\label{Cor LSDE-3}\par 
     Let $B$ be an $n$-dimensional standard $\cF$-Brownian motion starting at $0$. 
     Let $\bigl(\rho_2(t,E_t)\bigr)$, $\bigl(\mu_2(t,E_t)\bigr)$, 
     $\bigl(\sigma_2^k(t,E_t)\bigr)$ $(k=1,\ldots,n)$ be $d\ttimes d$-matrix-valued processes, 
     $\bigl(\rho_1(t,E_t)\bigr)$, $\bigl(\mu_1(t,E_t)\bigr)$, 
     $\bigl(\sigma_1^k(t,E_t)\bigr)$ $(k=1,\ldots,n)$ be $d$-dimensional processes.  
     Let $x_0\in\bbR^d$.  Then the unique strong solution $X$ to the SDE
     \begin{align}
          dX_t= \ &\bigl(\rho_1(t,E_t)+\rho_2(t,E_t)X_t\bigr)dt+\bigl(\mu_1(t,E_t)+\mu_2(t,E_t)X_t\bigr)dE_t \label{SDE38} \\
                &+\sum_{k=1}^n \bigl(\sigma_1^k(t,E_t)+\sigma_2^k(t,E_t)X_t\bigr)dB_{E_t}^k 
                 \ \ \ \textrm{with} \ X_0=x_0, \notag
     \end{align}
     which is a $d$-dimensional process, is explicitly written as 
     \begin{align}
           X_t=\ &\Phi_t \Biggl[ x_0+\int_0^t \Phi_s^{-1} \rho_1(s,E_s) ds\label{SDE39}\\
                            &+\int_0^t \Phi_s^{-1} \Bigl(\mu_1(s,E_s)-\sum_{k=1}^n \sigma_2^k(s,E_s)\sigma_1^k(s,E_s)\Bigr) dE_s
                                  \notag \\
                            &+\int_0^t \Phi_s^{-1} \sum_{k=1}^n\sigma_1^k(s,E_s) dB_{E_s}^k \Biggr], \notag
     \end{align}
     or equivalently as 
     \begin{align}
          X_t=\ &\Phi_t \Biggl[ x_0+\int_0^t \Phi_s^{-1} \rho_1(s,E_s) ds\label{SDE40}\\
                             &+\int_0^{E_t} \Phi_{D(s-)}^{-1} \Bigl(\mu_1(D_{s-},s)-\sum_{k=1}^n \sigma_2^k(D_{s-},s)
                                   \sigma_1^k(D_{s-},s)\Bigr) ds  \notag \\
                             &+\int_0^{E_t} \Phi_{D(s-)}^{-1} \sum_{k=1}^n\sigma_1^k(D_{s-},s) dB_s^k \Biggr], \notag
     \end{align}
     where $\Phi=(\Phi_t)$ is the fundamental solution to the homogeneous linear SDE corresponding to \eqref{SDE38}.  Namely, 
     $\Phi$ is the unique $d\ttimes d$-matrix-valued process satisfying the homogeneous SDE
     \begin{equation}
          d\Phi_t=\rho_2(t,E_t)\Phi_t dt+\mu_2(t,E_t)\Phi_t dE_t 
                +\sum_{k=1}^n \sigma_2^k(t,E_t)\Phi_t dB_{E_t}^k, \label{SDE40.1} \\
     \end{equation}
     with initial condition $\Phi_0=I_d$, where $I_d$ denotes the $d\ttimes d$-identity matrix. 
\end{Cor}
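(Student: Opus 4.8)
The plan is to imitate the proof of Theorem \ref{Thm LSDE-3} component by component, applying the product rule \eqref{ItoFormula2} and the time-changed It\^o calculus of Theorem \ref{Thm ITO} (equivalently Corollary \ref{Cor ITO}) entrywise to products of matrix-valued semimartingales, which are finite sums of products of scalar entries. Existence and uniqueness of the strong solution $X$ to \eqref{SDE38} and of the fundamental solution $\Phi$ to \eqref{SDE40.1} follow from the multidimensional analog of Lemma \ref{Lem SDE-existence}, since linear coefficients satisfy the Lipschitz condition \eqref{SDE2} and assumption \eqref{SDE2.5} entrywise; both $X$ and $\Phi$ then have continuous paths. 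The calculus rules \eqref{SDE20} remain valid componentwise, with $[B^k\circ E,B^\ell\circ E]=\delta_{k\ell}E$ because the components of $B$ are independent. The one genuinely new point, compared with the scalar case \eqref{SDE23}, is that there is no closed-form expression for the matrix $\Phi$, so its invertibility must be established directly before the representation \eqref{SDE39} even makes sense.

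To that end I would introduce the $d\ttimes d$-matrix-valued process $\Psi$ defined as the unique strong solution of
\begin{equation*}
     d\Psi_t=-\Psi_t\rho_2(t,E_t)\,dt+\Psi_t\Bigl(-\mu_2(t,E_t)+\sum_{k=1}^n\bigl(\sigma_2^k(t,E_t)\bigr)^2\Bigr)dE_t-\sum_{k=1}^n\Psi_t\sigma_2^k(t,E_t)\,dB_{E_t}^k,
\end{equation*}
with $\Psi_0=I_d$, whose existence and uniqueness is again covered by the multidimensional analog of Lemma \ref{Lem SDE-existence} (after transposition this SDE has the same shape as \eqref{SDE40.1}). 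Applying the product rule entrywise to $\Psi_t\Phi_t$ and using \eqref{SDE20}, the $dt$ terms of $(d\Psi_t)\Phi_t$ and $\Psi_t(d\Phi_t)$ cancel, their $dB_{E_t}^k$ terms cancel, and the $dE_t$ contributions of $(d\Psi_t)\Phi_t$, $\Psi_t(d\Phi_t)$ and $d[\Psi,\Phi]_t=-\sum_{k=1}^n\Psi_t\bigl(\sigma_2^k(t,E_t)\bigr)^2\Phi_t\,dE_t$ sum to zero, the correction term $\sum_k(\sigma_2^k)^2$ in the $dE_t$-drift of $\Psi$ having been chosen precisely for this purpose. Hence $d(\Psi_t\Phi_t)=0$, so $\Psi_t\Phi_t=I_d$ for all $t$; being a square matrix with a left inverse, $\Phi_t$ is invertible, $\Phi_t^{-1}=\Psi_t$ is a continuous $\cG$-adapted process, and the integrands in \eqref{SDE39} are well defined.

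The remainder parallels the proof of Theorem \ref{Thm LSDE-3}. Writing the candidate \eqref{SDE39} as $X_t=\Phi_tZ_t$ with $Z$ the bracketed process, one has $dZ_t=\Phi_t^{-1}\rho_1(t,E_t)\,dt+\Phi_t^{-1}\bigl(\mu_1(t,E_t)-\sum_k\sigma_2^k(t,E_t)\sigma_1^k(t,E_t)\bigr)dE_t+\sum_k\Phi_t^{-1}\sigma_1^k(t,E_t)\,dB_{E_t}^k$; the product rule gives $dX_t=(d\Phi_t)Z_t+\Phi_t(dZ_t)+d[\Phi,Z]_t$, where by \eqref{SDE20} the covariation term equals $\sum_k\sigma_2^k(t,E_t)\sigma_1^k(t,E_t)\,dE_t$. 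Collecting terms and substituting $\Phi_tZ_t=X_t$ reproduces \eqref{SDE38}, and $X_0=\Phi_0x_0=x_0$; uniqueness then identifies this $X$ as the strong solution. Finally, \eqref{SDE40} follows from \eqref{SDE39} by applying the 2nd change-of-variable formula \eqref{COV1} to each time-changed integral, $D$ being strictly increasing so that $\Phi_{D(s-)}^{-1}$ agrees with $(\Phi^{-1}\circ D)_{s-}$ and likewise for the coefficients.

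I expect the invertibility step to be the main obstacle: unlike the scalar case, one must guess the correct SDE for the candidate inverse $\Psi$ --- in particular getting the $dE_t$-drift correction $+\sum_k(\sigma_2^k)^2$ right, which arises entirely from the quadratic covariation via $[B^k\circ E,B^k\circ E]=E$ --- and then verify $d(\Psi\Phi)=0$ by a careful entrywise application of the product rule with the bookkeeping rules \eqref{SDE20}. Everything after that is a routine matrix version of the one-dimensional computations already carried out.
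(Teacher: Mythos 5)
Your argument is correct, and the second and third paragraphs coincide with what the paper does; the genuine divergence is in how invertibility of $\Phi_t$ is established. The paper argues by contradiction: if $\Phi_{t_0}\lambda=0$ for some $\lambda\neq 0$, then $(\Phi_t\lambda)$ solves the homogeneous linear SDE \eqref{SDE40.2} and vanishes at $t_0$, so by uniqueness it is the zero process, contradicting $\Phi_0\lambda=\lambda$. You instead construct a left inverse explicitly, as the solution $\Psi$ of an auxiliary linear SDE whose $dE_t$-drift carries the correction $+\sum_k(\sigma_2^k)^2$ coming from $[B^k\circ E,B^k\circ E]=E$, and verify $d(\Psi_t\Phi_t)=0$ entrywise; your bookkeeping (cancellation of the $dt$ and $dB_{E_t}^k$ terms, and of the three $dE_t$ contributions including $d[\Psi,\Phi]_t=-\sum_k\Psi_t(\sigma_2^k)^2\Phi_t\,dE_t$) checks out, as does the remark that transposing puts the $\Psi$-equation into the form \eqref{SDE40.1} so that Lemma \ref{Lem SDE-existence} applies. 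Each route has something to recommend it: the paper's is shorter and requires no guesswork, but it quietly invokes uniqueness for a solution pinned at an interior time $t_0$ rather than at $0$ (a backward-uniqueness flavor of the standard statement); yours sidesteps that subtlety entirely and, as a bonus, produces an explicit semimartingale representation of $\Phi^{-1}$, at the cost of having to divine the correct quadratic-variation correction in the drift of $\Psi$.
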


\begin{proof}   
     We first claim that for each path, $\Phi_t$ is invertible for all $t\geq 0$.  Otherwise, there would exist 
     $t_0\geq 0$ and $\lambda\in\bbR^d\setminus \{0\}$ such that $\Phi_{t_0}\lambda=0$.  
     The $d$-dimensional process $(\Phi_t \lambda)$ satisfies the homogeneous linear SDE 
     \begin{align}
          d\Psi_t=\rho_2(t,E_t)\Psi_t dt+\mu_2(t,E_t)\Psi_t dE_t 
                +\sum_{k=1}^n \sigma_2^k(t,E_t)\Psi_t dB_{E_t}^k. \label{SDE40.2}                 
     \end{align}
     The zero process is 
     the unique solution to \eqref{SDE40.2} for which $\Psi_{t_0}=0\in\bbR^d$.  
     Therefore, it follows that $\Phi_t\lambda=0$ for all $t\geq 0$, which contradicts $\Phi_0\lambda=\lambda\neq 0$. 
     Thus, $\Phi_t$ is invertible for all $t\geq 0$, and the right hand side of SDE \eqref{SDE39} is meaningful. 
     
     As in the proof of Proposition \ref{Prop LSDE-2}, it suffices to show that $X$ given in \eqref{SDE39}
     satisfies SDE \eqref{SDE38}.  Using the calculus rule 
     \begin{align*}
          [m,B^k\circ E]=[E,B^k\circ E]=0 \ \ \ \textrm{and} \ \ \ [B^k\circ E,B^\ell\circ E]=\delta^{k,\ell}E,
     \end{align*}
     where $\delta^{k,\ell}$ is the Kronecker delta, and applying the It\^o formula componentwise, 
     the proof is carried out in the same way as in Theorem \ref{Thm LSDE-3}.  
\end{proof}

\begin{Rems}\label{Rems LSDE-solution}\par
\begin{em}
     (a) The advantage of rewriting solutions in the forms \eqref{SDE24} and \eqref{SDE32} is that 
     they can be handled via conditioning on the random variable $E_t$.  
     This is especially useful in analyzing statistical data of a solution, such as its mean and variance.  
     If $Y$ is the solution to a classical It\^o SDE with linear coefficients 
     $dY_t=\bigl(b_1(t)+b_2(t)Y_t\bigr)dt +\bigl(\tau_1(t)+\tau_2(t)Y_t\bigr)dB_t$, 
     then the first two moments of $Y_t$ are characterized as solutions to linear ordinary differential equations (ODEs), 
     from which some information on statistics can be derived. 
     (See \cite[Thm.\ 4.5]{Gard} for a general case, or \cite[Problem 5.6.1]{Karatzas-S} 
     for a special case when $\tau_2\equiv 0$.) 
     However, it is generally impossible to obtain such ODEs for the solution $X$ to SDE \eqref{SDE21}, 
     even when $\rho_j$, $\mu_j$, $\sigma_j$ are deterministic. 
     For example, consider the SDE $dX_t=\mu_2(t)X_t dE_t$, a special case of \eqref{SDE21}.  
     Taking expectations in the integral form, $\bbE[X_t]=x_0+\bbE[\int_0^t \mu_2(s)X_s dE_s]$.  
     The expectation and integral are not interchangeable due to the presence of the random integrator $dE_s$. 
     As a result, unlike the case of a classical It\^o SDE, a general form of an ODE satisfied by $\bbE[X_t]$ cannot be obtained.  
     This observation heightens the importance of expressions such as \eqref{SDE24} and \eqref{SDE32}.   
     Moreover, since these expressions are derived via the 2nd change-of-variable formula \eqref{COV1}, 
     there is no doubt that Formula \eqref{COV1} is an indispensable tool for dealing with SDEs of the form \eqref{SDE1}.
   
     (b) {\it Recognizing how  our new class of SDEs of the form  \eqref{SDE21} arise: Viewpoint 1.} 
     If $E_t=t$ and $\rho_j$, $\mu_j$, $\sigma_j$ $(j=1,2)$ are all deterministic, then  
     Proposition \ref{Prop LSDE-2} and Theorem \ref{Thm LSDE-3} respectively reduce to well-known results for classical It\^o SDEs 
     with linear coefficients 
     \begin{align}
          dY_t=\bigl(b_1(t)+b_2(t)Y_t\bigr)dt +\bigl(\sigma_1(t)+\sigma_2(t)Y_t\bigr)dB_t, \label{SDE36}
     \end{align}
     where $b_j(t)=\rho_j(t)+\mu_j(t)$ $(j=1,2)$. (See \cite[Thm.\ 4.2]{Gard}.) 
     This observation suggests that 
     an SDE of the form \eqref{SDE21} might be constructed 
     via continuously altering the clock from $t$ to $E_t$ in \eqref{SDE36},
     but with the drift factor $b_j$ splitting into two components $\rho_j$ and $\mu_j$, 
     the former reflecting the effect of the original clock $t$ 
     and the latter of the new clock $E_t$.   
     Allocation of the weight of $b_j$ to $\rho_j$ and $\mu_j$ is due to consideration of 
     how much the time-changed model is affected by the new clock.  
     If the absolute value of $\rho_j$ is big (resp.\ small) in comparison to that of $\mu_j$, then the model \eqref{SDE21} 
     contains a large (resp.\ small) effect of the original clock.    
     SDE \eqref{SDE3} with $Z=B$ provides an example where there is no effect of the original clock.  
     Note that $\rho_j$ and $\mu_j$ may take negative values as well.  
     
     {\it Viewpoint 2.}  Again assume $\mu_j$, $\sigma_j$ $(j=1,2)$ are all deterministic.  Adopt a classical It\^o SDE 
     \begin{align}
          dZ_t=\bigl(\rho_1(t)+\rho_2(t)Z_t\bigr)dt +\bigl(\tau_1(t)+\tau_2(t)Z_t\bigr)dB_t \label{SDE37}
     \end{align}
     as the starting form of SDE \eqref{SDE21}.     
     This interpretation is valid 
     when path properties or statistical data of the solution to a simple SDE of the form \eqref{SDE37} 
     fail to match the real data (especially in terms of the volatility coefficients $\tau_j$), 
     but clearly possesses a drift similar to $\bigl(\rho_1(t)+\rho_2(t)Z_t\bigr)dt$.  
     In this situation, one prefers to `break' the $dB_t$ term via changing the clock from $t$ to $E_t$ 
     so that the model has more flexibility in describing the volatility. 
     As a result, $dE_t$ and $dB_{E_t}$ terms are obtained as in \eqref{SDE21}, without changing the drift coefficients $\rho_j$.      
     Note that the arguments from both viewpoints apply to a general class of SDEs of the form \eqref{SDE1} as well. 
   
     (c) The general form of solutions obtained in Proposition \ref{Prop LSDE-2}, Theorem \ref{Thm LSDE-3} and Corollary \ref{Cor LSDE-3} 
     are all valid even when SDE \eqref{SDE21} has general process coefficients.  
     More precisely, if the coefficients $\rho_j$, $\mu_j$, $\sigma_j$ $(j=1,2)$ are processes in $\bbL\cG$ 
     with $\mathcal{G}_t\DEF \mathcal{F}_{E_t}$ such that 
     the absolute values of $\rho_2$, $\mu_2$, $\sigma_2$ are dominated by some random variable $L$, then  
     it can be shown by reformulating Theorem 7 of \cite[Chap.\ V]{Protter} that 
     SDE \eqref{SDE21}, with the coefficients evaluated at $(t,\omega)$ rather than $\bigl(t,E_t(\omega)\bigr)$, 
     has a unique strong solution; moreover, 
     the explicit form of the solution has exactly the same expression as in the previous results.   
\end{em}
\end{Rems}

     Just as there is a reduction method for classical It\^o SDEs with nonlinear coefficients  
     \begin{align}
          dY_t=b(t,Y_t)dt+\tau(t)Y_tdB_t \ \ \ \textrm{with} \ Y_0=x_0, \label{SDE91}
     \end{align}
     Proposition \ref{Prop Reduction} provides an analogous technique for approaching a certain type of nonlinear SDE 
     including terms driven by a time-changed Brownian motion.  
     The `integrating factor' 
     \begin{align*}
          U_t\DEF \exp\Bigl\{ \frac 12 \int_0^t \tau^2(s)ds-\int_0^t \tau(s) dB_s \Bigr\}
     \end{align*} 
     reduces \eqref{SDE91} to a path-by-path ODE $d(U_tY_t) =U_t\cdot b(t,Y_t)dt$, with $U_0Y_0=x_0$,
     computation of which almost traces the proof of Proposition \ref{Prop Reduction}.    
     Applications of this reduction scheme are provided in Examples \ref{Ex LSDE-9} and \ref{Ex LSDE-10}.

\begin{Prop}\label{Prop Reduction}\par \hspace{-2mm}
     \textbf{\emph{(Reduction Method)}} \
     Let $B$ be a standard $\cF$-Brownian motion. 
     Let $E$ be a continuous $\cF$-time-change.
     Then the `integrating factor' $U$ defined by 
     \begin{align}
          U_t\DEF \exp\Biggl\{ \int_0^t \Bigl(\frac 12 \sigma_2^2(s,E_s)-\mu_2(s,E_s) \Bigr) dE_s-\int_0^t \sigma_2(s,E_s) dB_{E_s}
               \Biggr\} \label{SDE92}
     \end{align}
     reduces the nonlinear SDE 
     \begin{equation}
          dX_t=\rho(t,E_t,X_t)dt+\mu_2(t,E_t)X_tdE_t+\sigma_2(t,E_t)X_tdB_{E_t}
             \ \ \ \textrm{with} \ X_0=x_0, \label{SDE93}
     \end{equation}
     to a path-by-path ODE
     \begin{align}
          \frac{dW_t}{dt}=U_t\cdot \rho\bigl(t,E_t,U_t^{-1}W_t\bigr)
          \ \ \ \textrm{with} \ W_0=x_0, \label{SDE94}
     \end{align}
     where $W_t\DEF U_tX_t$ so that $X_t=U_t^{-1}W_t$.  
\end{Prop}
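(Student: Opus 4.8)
The plan is to mimic the computations in the proofs of Proposition~\ref{Prop LSDE-2} and Theorem~\ref{Thm LSDE-3}: apply the It\^o formula to the exponential $U$, then the product rule to $W=UX$, and verify that the $dE_t$ and $dB_{E_t}$ contributions cancel, leaving only the $dt$ term. Throughout, $X$ denotes a (the) solution of SDE~\eqref{SDE93}; its existence is the concern of Lemma~\ref{Lem SDE-existence} and is taken for granted here. First I would record that, since $E$ is a continuous $\cF$-time-change, every process is in synchronization with $E$, so Lemmas~\ref{Lem JACOD2} and \ref{Lem JACOD3} apply and in particular yield the calculus rules $[m,m]=[m,E]=[E,E]=[m,B\circ E]=[E,B\circ E]=0$ and $[B\circ E,B\circ E]=E$, exactly as in \eqref{SDE20}. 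Writing $U_t=e^{C_t}$ with $C_t\DEF\int_0^t\bigl(\tfrac12\sigma_2^2(s,E_s)-\mu_2(s,E_s)\bigr)dE_s-\int_0^t\sigma_2(s,E_s)\,dB_{E_s}$, a computation like \eqref{ITO15} using Property~\ref{Properties}~(\ref{*5}) and these rules gives $[C,C]=\sigma_2^2(\cdot,E)\bullet E$, so that the It\^o formula \eqref{ItoFormula1} with $f(a)=e^a$ produces
\[
dU_t=U_t\,dC_t+\tfrac12 U_t\,d[C,C]_t=U_t\bigl(\sigma_2^2(t,E_t)-\mu_2(t,E_t)\bigr)dE_t-U_t\sigma_2(t,E_t)\,dB_{E_t}.
\]

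Next I would apply the product rule \eqref{ItoFormula2} to $W=UX$. Since $U$, $X$, $E$ and $B\circ E$ are all continuous, $U_{t-}=U_t$ and $X_{t-}=X_t$, and, again by the rules \eqref{SDE20}, the finite-variation integrators $dt$ and $dE_t$ contribute nothing to any bracket, so the only surviving term in $[U,X]$ comes from the two $dB_{E_t}$ parts, giving $[U,X]_t=-\int_0^t U_s\sigma_2^2(s,E_s)X_s\,dE_s$. Substituting $dX_t=\rho(t,E_t,X_t)dt+\mu_2(t,E_t)X_t\,dE_t+\sigma_2(t,E_t)X_t\,dB_{E_t}$ together with the expression for $dU_t$ into $dW_t=U_t\,dX_t+X_t\,dU_t+d[U,X]_t$ and collecting coefficients, the $dB_{E_t}$ coefficient is $U_t\sigma_2 X_t-U_tX_t\sigma_2=0$ and the $dE_t$ coefficient is $U_tX_t\bigl(\mu_2+\sigma_2^2-\mu_2-\sigma_2^2\bigr)=0$, so $dW_t=U_t\rho(t,E_t,X_t)\,dt$. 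Since $U_t>0$ for all $t\ge 0$ by \eqref{SDE92} (hence $U_t$ is invertible and $X_t=U_t^{-1}W_t$) and $W_0=U_0X_0=x_0$, this is precisely \eqref{SDE94}; moreover, because the right-hand side is expressed through a $dt$-integral, $t\mapsto W_t$ is absolutely continuous on each path, so \eqref{SDE94} is a genuine path-by-path ODE.

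I do not expect a real obstacle: the argument is a routine application of the stochastic calculus already developed, entirely parallel to Proposition~\ref{Prop LSDE-2} and Theorem~\ref{Thm LSDE-3}. The only point requiring care is the quadratic-covariation bookkeeping — one must use the full list of vanishing brackets, and then check that the It\^o correction term $\tfrac12 U_t\sigma_2^2\,dE_t$ coming from $U$ and the term $-U_t\sigma_2^2X_t\,dE_t$ coming from $[U,X]$ combine with the $\mu_2$ and $\sigma_2$ terms of $dX_t$ and $dU_t$ to cancel exactly. If one also wants the converse statement, namely that $X\DEF U^{-1}W$ solves \eqref{SDE93} whenever $W$ solves \eqref{SDE94}, it follows either by running the same computation in reverse (note $U^{-1}=e^{-C}$ and apply It\^o and the product rule to $U^{-1}W$) or, under the standing Lipschitz hypotheses, from the uniqueness part of Lemma~\ref{Lem SDE-existence}.
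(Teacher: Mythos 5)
Your proof is correct and follows essentially the same route as the paper's: write $U=e^{C}$, compute $[C,C]=\sigma_2^2(\cdot,E)\bullet E$ via the rules \eqref{SDE20}, apply the It\^o formula to get $dU_t=U_t\bigl(\sigma_2^2-\mu_2\bigr)dE_t-U_t\sigma_2\,dB_{E_t}$, and then use the product formula \eqref{ItoFormula2} to see that the $dE_t$ and $dB_{E_t}$ contributions cancel, leaving $dW_t=U_t\rho(t,E_t,X_t)\,dt$. The added observations (absolute continuity of $W$, and the converse via uniqueness) are correct but not part of the paper's argument.
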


\begin{proof}   
     For notational convenience, we suppress the dependence on $E_t$ and simply write 
     $\rho(t,X_t)=\rho(t,E_t,X_t)$, $\mu_2(t)=\mu_2(t,E_t)$ and $\sigma_2(t)=\sigma_2(t,E_t)$.
     Write $U_t=e^{A_t}$ so that 
     \begin{align*}
          A_t=\int_0^t \Bigl(\frac 12 \sigma_2^2(s)-\mu_2(s) \Bigr) dE_s-\int_0^t \sigma_2(s) dB_{E_s}.
     \end{align*} 
     Then the It\^o formula \eqref{ItoFormula1} with $f(a)=e^a$ together with the calculus rules in  \eqref{SDE20} yield 
     \begin{align*}
          dU_t=U_t dA_t+\tfrac 12 U_t d[A,A]_t=U_t\bigl\{\bigl(\sigma_2^2(t)-\mu_2(t)\bigr)dE_t-\sigma_2(t)dB_{E_t}\bigr\}. 
     \end{align*}
     Hence, by the product formula \eqref{ItoFormula2},
     \begin{align}
          d(U_tX_t)&=U_t dX_t+X_tdU_t+d[U,X]_t\label{SDE95}\\
                      &=U_t \bigl\{\rho(t,X_t)dt+\mu_2(t)X_tdE_t+\sigma_2(t)X_tdB_{E_t} \bigr\}\notag\\
                      & \ \ +X_tU_t \bigl\{\bigl(\sigma_2^2(t)-\mu_2(t)\bigr)dE_t-\sigma_2(t)dB_{E_t}
                           \bigr\} - \sigma_2^2(t)X_tU_tdE_t\notag\\
                      &=U_t\cdot \rho(t,X_t)dt. \notag
     \end{align}
     By setting $W_t\DEF U_tX_t$, \eqref{SDE95} immediately yields \eqref{SDE94}.      
\end{proof}

\section{Examples} \label{sec example}

     The examples below are drawn from the classical It\^o SDEs; however, the driving processes
     involve a continuous time-change $E$ and the time-changed Brownian motion $B\circ E$.  
     Assume that all coefficients of SDEs appearing in this section satisfy the conditions \eqref{SDE2} and \eqref{SDE2.5}.

\begin{Ex}\label{Ex LSDE-4}\par
\begin{em}
     The most basic linear SDE is the homogeneous one with constant coefficients, 
     which is an analogue of the so-called Black--Scholes SDE.  Consider
     \begin{align}
          dX_t=\rho X_tdt+\mu X_tdE_t+\sigma X_tdB_{E_t} \ \ \ \textrm{with} \ X_0= x_0, \label{SDE41}
     \end{align}
     where $\rho$, $\mu$, $\sigma$ are real constants and $x_0>0$, $\sigma>0$.  
     
     The case where $E_t=t$ corresponds to the Black--Scholes model $dY_t=b Y_tdt+\sigma Y_tdB_t$ with $Y_0=x_0$,
     where $b=\rho+\mu$. 
     The solution
     \begin{align*}
          Y_t=x_0 \exp\Bigl\{\Bigl(b-\frac 1 2 \sigma^2 \Bigr)t+\sigma B_t \Bigr\}
     \end{align*}
     has the following asymptotic behavior: 
     \begin{enumerate}[({Y.}1)]
          \item If $b>\sigma^2/2$, then $\lim_{t\to\infty}Y_t=\infty$.  
          \item If $b<\sigma^2/2$, then $\lim_{t\to\infty}Y_t=0_+$.  
          \item If $b=\sigma^2/2$, then $Y_t$ asymptotically fluctuates 
                           between arbitrarily large and arbitrarily small positive values infinitely often.   
     \end{enumerate}
     This follows by rewriting the solution as $Y_t=x_0 \exp\bigl\{t\bigl[(b-\sigma^2/2)+\sigma \cdot B_t/t \bigr]\bigr\}$ 
     and using the law of the iterated logarithm for paths of Brownian motion
     \begin{align}
          \limsup_{t\to\infty} \dfrac{B_t}{\sqrt{2t \log\log t}}=1 \ \ \ \textrm{and} 
          \ \ \ \liminf_{t\to\infty} \dfrac{B_t}{\sqrt{2t \log\log t}}=-1. \label{SDE43}
     \end{align}     
     For details of this classical model, consult \cite{Steele}.

     Analysis of the asymptotic behavior of the solution to SDE \eqref{SDE41} is accomplished 
     with the help of the explicit solution form obtained from \eqref{SDE23},
     \begin{align}
          X_t=x_0 \exp\Bigl\{\rho t+\Bigl(\mu-\frac 12 \sigma^2\Bigr)E_t+\sigma B_{E_t}\Bigr\}.\label{SDE42}
     \end{align} 
 
     First, if $\rho=0$, i.e., if there is no effect of the original clock upon the solution of SDE \eqref{SDE41}, 
     then, by Theorem \ref{Thm SDE-duality} (2), $(X_{D_t})$ satisfies the classical It\^o SDE 
     $dX_{D_t}=\mu X_{D_t}dt+\sigma X_{D_t}dB_t$.  
     Since $\lim_{t\to\infty} D_t=\infty$, $X$ has the same asymptotic behavior as the above-mentioned $Y$ with $b$ replaced by $\mu$:
     \begin{enumerate}[({X.a.}1)]
          \item If $\rho=0$ and $\mu>\sigma^2/2$, then $\lim_{t\to\infty}X_t=\infty$.  
          \item If $\rho=0$ and $\mu<\sigma^2/2$, then $\lim_{t\to\infty}X_t=0_+$.  
          \item If $\rho=0$ and $\mu=\sigma^2/2$, then $X_t$ asymptotically fluctuates 
                           between arbitrarily large and arbitrarily small positive values infinitely often.   
     \end{enumerate}
 
     Next, suppose $\rho\neq 0$.  
     Assume $\lim_{t\to\infty}E_t=\infty$ and $\lim_{t\to\infty}E_t/t=0$; i.e., $E_t$ is asymptotically slower than $t$.  
     By rewriting \eqref{SDE42} as 
     \begin{align*}
          X_t=x_0 \exp\Biggl\{t\Biggl[ \rho+\Bigl(\mu-\frac 12 \sigma^2\Bigr)\frac{E_t}{t}
              +\sigma \cdot \frac{B_{E_t}}{E_t}\cdot \frac{E_t}{t} \Biggr]\Biggr\}
     \end{align*} 
     and using \eqref{SDE43} again, we easily observe 
     \begin{enumerate}[({X.b.}1)]
          \item If $\rho>0$ and $E_t$ is asymptotically slower than $t$, then $\lim_{t\to\infty}X_t=\infty$.  
          \item If $\rho<0$ and $E_t$ is asymptotically slower than $t$, then $\lim_{t\to\infty}X_t=0_+$. 
     \end{enumerate}
     These cases match with our intuition: if the original clock $t$ asymptotically ticks more frequently than the new clock $E_t$, 
     then the $\rho$ describing the effect of the original clock completely determines the future behavior of 
     the solution $X$, no matter what values $\mu$ and $\sigma$ take.       

     On the other hand, if $E_t$ grows faster than $t$, i.e.,  if $\lim_{t\to\infty}E_t/t=\infty$, 
     then the situation becomes much more complicated. 
     Rewrite \eqref{SDE42} as 
     \begin{align*}
         X_t=x_0 \exp\Biggl\{E_t\Biggl[ \rho\frac{t}{E_t}+\Bigl(\mu-\frac 12 \sigma^2\Bigr)+\sigma \frac{B_{E_t}}{E_t} \Biggr]\Biggr\}. 
     \end{align*} 
     By noting \eqref{SDE43} again, we observe 
     \begin{enumerate}[({X.c.}1)]
          \item If $\rho\neq 0$, $\mu>\sigma^2/2$ and $E_t$ grows faster than $t$, then $\lim_{t\to\infty}X_t=\infty$.  
          \item If $\rho\neq 0$, $\mu<\sigma^2/2$ and $E_t$ grows faster than $t$, then $\lim_{t\to\infty}X_t=0_+$. 
          \item If $\rho\neq 0$, $\mu=\sigma^2/2$ and $E_t$ grows faster than $t$, then the fluctuation of $X_t$ 
                   varies depending on the coefficients of the SDE and also the speed at which $E_t$ grows.  
     \end{enumerate}
     The first two cases show that if $\mu\neq \sigma^2/2$ and $E_t$ grows faster than $t$, then 
     the asymptotic behavior of $X$, regardless of the value of $\rho(\neq 0)$, coincides with (X.a.1) and (X.a.2). 
     This is due to the fact that the effect of the faster clock $E_t$ is strongly reflected on $\mu$ 
     to the extent that $\rho$ is ignored.   

     In the special situation (X.c.3), if $\lim_{t\to\infty}\sqrt{2E_t \log\log E_t}/t=\infty$ so that $E_t$ grows extremely fast, then 
     $X_t$ asymptotically takes arbitrary values on the positive real line infinitely many times.  
     This is immediate upon writing 
     \begin{align*}
          X_t=x_0 \exp\Biggl\{t\Biggl[ \rho+\sigma \cdot \frac{B_{E_t}}{\sqrt{2E_t \log\log E_t}} 
            \cdot \frac{\sqrt{2E_t \log\log E_t}}{t} \Biggr]\Biggr\}.
     \end{align*}   
     On the other hand, if, e.g., $\lim_{t\to\infty}\sqrt{2E_t \log\log E_t} /t=0$, then  
     $X_t$ asymptotically goes off to $\infty$ if $\rho>0$ and decreases to $0$ if $\rho<0$. 
     
     These observations establish that as the time-change $E$ accelerates the speed at which time passes, 
     dependence of the behavior of the solution $X$ upon $\rho$ and $\mu$ respectively becomes lighter and heavier. 
     \qed
\end{em}
\end{Ex}

\begin{Ex}\label{Ex LSDE-6}\par
\begin{em}
     Assume $B$ is independent of $D$, or equivalently, of $E$.  The homogeneous linear SDE 
     \begin{align}
          dX_t=\rho(t) X_tdt+\mu(E_t)X_tdE_t+\sigma(E_t) X_tdB_{E_t} \ \ \  \textrm{with} \ X_0= x_0,  \label{SDE61}
     \end{align}
     where $x_0> 0$, has a unique strong solution $X$ expressed as \eqref{SDE24}.  
     
     The value of the mean function $\bbE[X_t]$ can be investigated by
     conditioning on $E_t$ and using the independence of $B$ and $E$:
     \begin{align}
          \bbE[X_t]
          &= x_0\exp\Bigl\{ \int_0^t \rho(s)ds \Bigr\} \label{SDE62}
          \cdot  \bbE \Bigl[ \exp\Bigl\{\int_0^{E_t}\Bigl( \mu(s)-\dfrac 12 \sigma^2(s) \Bigr) ds 
                 +\int_0^{E_t} \sigma(s) dB_s\Bigr\}\Bigr] \\
          &= x_0\exp\Bigl\{ \int_0^t \rho(s)ds \Bigr\} \notag\\
          & \ \ \ \times \int_0^\infty \bbE \Bigl[ \exp\Bigl\{\int_0^v \Bigl( \mu(s)-\dfrac 12 \sigma^2(s) \Bigr) ds 
                 +\int_0^v \sigma(s) dB_s\Bigr\}\Bigr] \hspace{1mm} p_t(dv) \notag \\
          &= x_0\exp\Bigl\{ \int_0^t \rho(s)ds \Bigr\} \cdot \int_0^\infty \exp\Bigl\{\int_0^v \mu(s) ds \Bigr\} 
                 \cdot \bbE [M_v] \hspace{2mm} p_t(dv), \notag 
     \end{align}
     where $p_t$ denotes the law of the random variable $E_t$ and $M$ is a continuous $\cF$-local martingale given by 
     \begin{align}
          M_v\DEF \exp\Bigl\{-\dfrac 12\int_0^v \sigma^2(s) ds +\int_0^v \sigma(s) dB_s\Bigr\}. \label{SDE63}
     \end{align}
     Actually the process $M$ is a martingale since $\sigma$ satisfies the Novikov condition; i.e., \linebreak 
     $\bbE[\exp\{\frac 12 \int_0^v \sigma^2(s)ds\}]<\infty$ for all $v\geq 0$. 
     (See \cite[Prop.\ 3.5.12]{Karatzas-S}.)
     Hence, $\bbE[M_v]=1$ for all $v\geq 0$. 
     Thus, \eqref{SDE62} yields 
     \begin{align}
          \bbE[X_t]=x_0\exp\Bigl\{ \int_0^t \rho(s)ds \Bigr\}\cdot \int_0^\infty \exp\Bigl\{\int_0^v \mu(s) ds\Bigr\} 
              \hspace{1mm} p_t(dv). \label{SDE64}
     \end{align}
     If $E_t=t$, then $p_t=\delta_t$, the Dirac measure with mass at $t$.  Hence, \eqref{SDE64} yields 
     $\bbE[X_t]=x_0\exp\bigl\{ \int_0^t \bigl(\rho(s)+ \mu(s) \bigr)ds\bigr\}$, 
     which, of course, coincides with the mean function $\bbE[Y_t]$ of the solution $Y$ to the classical It\^o SDE
     $dY_t=\bigl(\rho(t)+\mu(t)\bigr) Y_tdt+\sigma(t) Y_tdB_t$ with $Y_0= x_0$.   
     The result \eqref{SDE64} shows that the behaviors of $\rho$ and $\mu$ together
     govern the range of fluctuation of the mean function $\bbE[X_t]$.  
     Moreover, even when the coefficient of $dE_t$ in SDE \eqref{SDE61} is replaced by a more general $\mu(t,E_t)X_t$, 
     some form of estimate on $\bbE[X_t]$ can still be obtained.  For instance, if $\int_0^v \mu(D_{s-},s)ds\geq 0$ for all $v\geq 0$, 
     then $\bbE[X_t]\geq x_0 \exp\bigl\{ \int_0^t \rho(s)ds \bigr\}$.  

     The variance function $\bbV[X_t]$ of the solution $X$ is computed similarly:
     \begin{align*}  
        \bbV[X_t] 
         =\ &x_0^2\exp\Bigl\{ 2\int_0^t \rho(s)ds \Bigr\} 
         \cdot \Biggl[\int_0^\infty \exp\Bigl\{2\int_0^v \mu(s) ds+\int_0^v \sigma^2(s)ds \Bigr\} \hspace{1mm} p_t(dv) \\ 
         &- \Biggl( \int_0^\infty \exp\Bigl\{\int_0^v \mu(s) ds \Bigr\} \hspace{1mm} p_t(dv)\Biggr)^{\hspace{-1mm}2\hspace{1mm}} \Biggr].\notag 
     \end{align*}
     Unlike the explicit form of the mean function in \eqref{SDE64}, $\bbV[X_t]$ involves the information $\sigma$
     concerning the weight of the $dB_{E_t}$ term in SDE \eqref{SDE61}. 
     
     As a special case of SDE \eqref{SDE61}, 
     assume $\mu(u)\equiv -\lambda$ for some $\lambda>0$.  Then \eqref{SDE64} is expressed 
     in terms of the Laplace transform of the law of $E_t$:
     \begin{align}
          \bbE[X_t]=x_0\exp\Bigl\{ \int_0^t \rho(s)ds \Bigr\}\cdot \int_0^\infty e^{-\lambda v} \hspace{1mm} p_t(dv). \label{SDE66}
     \end{align}
     Moreover, if $E$ is the first hitting time process of an $\cF$-stable subordinator of index $\beta\in(0,1)$ which is independent of $B$, 
     then the Laplace transform in \eqref{SDE66} 
     is associated with the Mittag--Leffler function due to \cite[Thm.\ 4.3]{BKS}:  
     \begin{align}
          \bbE[X_t]=x_0\exp\Bigl\{ \int_0^t \rho(s)ds \Bigr\}\cdot \textbf{E}_\beta(-\lambda t^\beta), \label{SDE67}
     \end{align}
     where $\textbf{E}_\beta(z)\DEF \sum_{n=0}^\infty z^n/\Gamma(\beta n+1)$ with $\Gamma(\cdot)$ being the Gamma function.  \qed  
\end{em}
\end{Ex}

\begin{Ex}\label{Ex LSDE-7}\par
\begin{em}
     Consider the inhomogeneous linear SDE 
     \begin{align}
          dX_t=\ &\Bigl(\dfrac{b}{1-t}-\dfrac{\gamma}{1-t} X_t\Bigr)dt+\Bigl(\dfrac{c}{1-E_t}-\dfrac{\eta}{1-E_t} X_t\Bigr) dE_t 
            +dB_{E_t}, \ t\in[0,1), \label{SDE71} \\ 
            & \ \textrm{with} \ X_0= a, \notag
     \end{align}
     where $a$, $b$, $c$, $\gamma$, $\eta \in\bbR$ and $E_t$ increases to $1$ as $t$ increases to $1$.  
     
     The fundamental solution to the homogeneous linear SDE corresponding to \eqref{SDE71} is $\Phi_t=(1-t)^\gamma(1-E_t)^\eta$.  
     Hence, \eqref{SDE31} yields  
     \begin{align} 
          X_t=\ &(1-t)^\gamma(1-E_t)^\eta a + \int_0^t \frac{b}{1-s}
                \Bigl(\frac{1-t}{1-s}\Bigr)^\gamma\Bigl(\frac{1-E_t}{1-E_s}\Bigr)^\eta ds \label{SDE72}\\
               &+ \int_0^t \frac{c}{1-E_s}\Bigl(\frac{1-t}{1-s}\Bigr)^\gamma\Bigl(\frac{1-E_t}{1-E_s}\Bigr)^\eta dE_s
               + \int_0^t \Bigl(\frac{1-t}{1-s}\Bigr)^\gamma\Bigl(\frac{1-E_t}{1-E_s}\Bigr)^\eta dB_{E_s}. \notag
     \end{align}         
     If $E_t=t$, then the solution \eqref{SDE72} reduces to 
     \begin{align} 
          X_t= (1-t)a-t(b+c)+(1-t)\int_0^t \frac{1}{1-s}dB_s,
     \end{align}         
     a Brownian bridge from $a$ to $(b+c)$.  
     Moreover, the class of SDEs of the form \eqref{SDE71} contains a `time-changed Brownian bridge' from $a$ to $c$.  
     In fact, if $b=0$, $\gamma=0$ and $\eta=1$, 
     then $X$ satisfies the SDE
     \begin{align}
          dX_t=\Bigl(\dfrac{c}{1-E_t}-\dfrac{1}{1-E_t} X_t\Bigr) dE_t +dB_{E_t}, \ t\in[0,1), \ \ \ \textrm{with} \ X_0= a,
     \end{align}
     which is, by Theorem \ref{Thm SDE-duality}, associated with the classical Brownian bridge SDE
     \begin{align}
          dY_t=\Bigl(\dfrac{c}{1-t}-\dfrac{1}{1-t} Y_t\Bigr) dt +dB_t, \ t\in[0,1), \ \ \ \textrm{with} \ Y_0= a, 
     \end{align}
     via the relation $X=Y\circ E$.  Thus, in this particular case, $X$ is a process obtained by time-changing the Brownian bridge $Y$.
     \qed
\end{em}
\end{Ex}

     Viewpoint 1 of Remark \ref{Rems LSDE-solution} (b) 
     states that it is possible 
     to recognize that the two components $\rho_j$ and $\mu_j$ of SDE \eqref{SDE21} 
     are produced by splitting the drift factor of some classical It\^o SDE.   
     Examples \ref{Ex LSDE-4}, \ref{Ex LSDE-6} and \ref{Ex LSDE-7} are all discussed from this viewpoint.  
     However, as mentioned in Viewpoint 2 of the remark, 
     it is also possible to attribute the presence of $\mu_j$ to the $dB_t$ term in a classical It\^o SDE.  
     Example \ref{Ex LSDE-8} illustrates this viewpoint.

\begin{Ex}\label{Ex LSDE-8}\par
\begin{em}
     This example investigates statistical data obtained from the solution to the inhomogeneous linear SDE
     \begin{align}
          dX_t=-\alpha X_tdt +\mu dE_t+\sigma dB_{E_t} \ \ \ \textrm{with} \ X_0=x_0, \label{SDE81}
     \end{align}
     where $\alpha$, $\sigma>0$, $\mu\in\bbR$, and $x_0\neq 0$.  
     SDE \eqref{SDE81} with $E_t=t$ and $\mu=0$ is called the Langevin equation or the Ornstein--Uhlenbeck model, 
     and its solution is referred to as the Ornstein--Uhlenbeck process.   
     The coefficient $-\alpha X_t$ of the $dt$ term is negative (resp.\ positive) when $X_t$ is positive (resp.\ negative), which implies 
     $X_t$ is drawn back to zero once it drifts away.  
     Since the coefficient $\mu$ describing the drift based on the new clock $E_t$ is not proportional to the current position $X_t$,   
     if, e.g., $E_t$ represents the business time at the calendar time $t$, then 
     $X_t$, regardless of its value, is always affected by the evolution of the business time.  
     In other words, the model has a certain factor of weight $\mu$ which pushes the position either up or down during business hours,  
     and its effect on the position becomes larger (resp.\ smaller) when the business time grows faster (resp.\ slower).     
     Moreover, the dispersion coefficient $\sigma$ does not depend on the position either.   
     Therefore, unless the time-change $E$ either stays flat or accelerates or decelerates drastically on an interval, 
     $X_t$ fluctuates on this interval at a certain rate with mild error even when it approaches close to zero. 
     In finance, the Ornstein--Uhlenbeck-type model \eqref{SDE81}, which incorporates a possible time-change,  
     could be used to describe the deviation of an interest rate around a central bank's target rate.

     Assume both of the following technical conditions are satisfied: 
     \begin{enumerate}[(a)]
          \item for each $t\ge 0$, the random variable $E_t$ is bounded; i.e., $\bbP(E_t\leq c_t)=1$ for some finite positive constant $c_t$;   \vspace{1mm}
          \item $\bbE\bigl[ \int_0^t e^{2\alpha D_{s-}} ds \bigr]<\infty$ for all $t\geq 0$.            
     \end{enumerate}  
     The monotonicity of $D$ implies that the condition (b) is equivalent to: 
     \begin{enumerate}[(a')]
          \item[(b')] $\bbE\bigl[ e^{2\alpha D_{t-}} \bigr]<\infty$ for all $t\geq 0$.            
     \end{enumerate}  
    
     Let us analyze the mean $\bbE[X_t]$ of the solution $X$ to SDE \eqref{SDE81}.  
     By \eqref{SDE31} and 
     \eqref{SDE32}, $X$ can be represented in two ways:
     \begin{align}
          X_t&=e^{-\alpha t} \Bigl\{ x_0+\mu \int_0^t e^{\alpha s} dE_s +\sigma \int_0^t e^{\alpha s} dB_{E_s}\Bigr\}\label{SDE82}\\
             &=e^{-\alpha t} \Bigl\{ x_0+\mu \int_0^{E_t} e^{\alpha D_{s-}} ds +\sigma \int_0^{E_t} e^{\alpha D_{s-}} dB_s\Bigr\}.\notag
     \end{align}   
     By assumption (b), the process $N$ defined by $N_t\DEF \int_0^t e^{\alpha D_{s-}} dB_s$ is an $\cF$-martingale. 
     Since each $E_t$ is a bounded $\cF$-stopping time due to (a), Doob's optional sampling theorem  
     yields $\bbE[N_{E_t}]=\bbE[N_0]=0$.  (See \cite[Problem 1.3.23 (i)]{Karatzas-S}.) 
     Hence, taking expectations in \eqref{SDE82}, 
     \begin{align}
          \bbE[X_t]= e^{-\alpha t} \Biggl\{ x_0+\mu \hspace{1mm}\bbE\Bigl[\int_0^t e^{\alpha s} dE_s\Bigr] \Biggr\}\label{SDE83}
                   = e^{-\alpha t} \Biggl\{ x_0+\mu \hspace{1mm}\bbE\Bigl[\int_0^{E_t} e^{\alpha D_{s-}} ds\Bigr] \Biggr\}.
     \end{align}
     Consequently, the asymptotic behavior of the mean function $\bbE[X_t]$ completely 
     depends on the distributions of the processes $E$ and $D$.     
     In the special case where $E_t(\omega)=R(\omega)\cdot t$ for some positive random variable $R$, 
     $\bbE[X_t]=x_0\, e^{-\alpha t}+(\mu \bbE[R]/\alpha)(1-e^{-\alpha t})$, 
     which approaches $\mu \bbE[R]/\alpha$ as $t\to\infty$. 
     Therefore, if the force attracting $X_t$ to zero is sufficiently strong compared to 
     the factor producing the effect of the evolution of the time 
     (i.e., if $\alpha$ is much larger than the absolute value of $\mu$ and $\bbE[R]$), 
     then the expected value of the position tends to a level close to zero as $t\to\infty$.  
     On the other hand, the bigger the weight $\mu$ or the expected rate $\bbE[R]$ of acceleration of the new clock, 
     the greater the asymptotic value of the expected position.  
     
     Another way to observe the fluctuation of $\bbE[X_t]$
     is to directly analyze the integral form of the SDE \eqref{SDE81}. 
     Taking the expectation, 
     \begin{align}
          \bbE[X_t]=-\alpha \int_0^t \bbE[X_s] ds+\mu\bbE[E_t]+\sigma\bbE[B_{E_t}].  \label{SDE84}
     \end{align}
     The last term vanishes again due to the assumption (a) and Doob's optional sampling theorem.  
     Hence, we obtain a differential equation
     \begin{align}
          \frac d {dt}\bbE[X_t]=-\alpha \bbE[X_t]+\mu \frac d{dt}\bbE[E_t]
          \ \ \ \textrm{with} \ \bbE[X_0]=x_0, \ \bbE[E_0]=0. \label{SDE85}
     \end{align}
     Although this is not the explicit form of $\bbE[X_t]$ obtained in \eqref{SDE83}, 
     it still provides information on the relationship between the time evolutions of $\bbE[X_t]$ and $\bbE[E_t]$.  
     
     The term $\bbE[B_{E_t}]$ in \eqref{SDE84} vanishes even when the assumption (a) is replaced by one of the following:
     \begin{enumerate}[(a)]
          \item[(c)] $\bbE[\sqrt{E_t}\,]<\infty$ for all $t\geq 0$;   \vspace{1mm}
          \item[(d)] $B$ is independent of $E$.            
     \end{enumerate}   
     If condition (c) holds, which is weaker than (a), then the `Wald identity' 
     $\bbE[B_{E_t}]=0$ holds for each $t\geq 0$.
     (See \cite[Problem 3.2.12, Exercise 3.3.35]{Karatzas-S}.) 
     On the other hand, (d) encourages conditioning on the random variable $E_t$ to obtain $\bbE[B_{E_t}]=0$.  
     
     Suppose $E$ is the first hitting time process of an $\cF$-stable subordinator of index $\beta\in (0,1)$
     which is independent of $B$, so condition (d) holds by assumption.
     There is a positive constant $c(\beta)$ such that $\bbE[E_t]=c(\beta)\, t^{\beta}$ for all $t\geq 0$, due to \cite[Cor.\ 3.1]{M-S_1}.
     Hence, (c) also holds. 
     Moreover, using this moment result, \eqref{SDE85} is reexpressed as 
     \begin{align}
          \frac d {dt}\bbE[X_t]=-\alpha \bbE[X_t]+\mu \beta \hspace{1.5pt}c(\beta)\, t^{\beta-1} 
          \ \ \ \textrm{with} \ \bbE[X_0]=x_0. \label{SDE86}
     \end{align}      
     The solution of the first order linear ODE \eqref{SDE86} is given by 
     \begin{align}
          \bbE[X_t]&=e^{-\alpha t}\Bigl\{ x_0+\mu\beta \hspace{1.5pt} c(\beta) \int_0^t e^{\alpha s} s^{\beta-1} ds \Bigr\} \label{SDE87}\\ 
              &=e^{-\alpha t}\Bigl\{ x_0+\mu\beta \hspace{1.5pt}c(\beta) \int_0^t g_{\alpha,t}(r) (t-r)^{\beta-1} dr \Bigr\} \notag\\
              &=e^{-\alpha t}\Bigl\{ x_0+\mu\beta 
                     \hspace{1.5pt}c(\beta)\hspace{1.5pt} \Gamma(\beta) \cdot (J^\beta g_{\alpha,t})(t)\Bigr\},\notag
     \end{align}     
     where $g_{\alpha,t}(r)\DEF e^{\alpha(t-r)}$, and $\Gamma(\cdot)$ and $J^\beta$ respectively denote the Gamma function and  
     the fractional integral of order $\beta$. (For definition of fractional integrals, see \cite{G-M}.)
     
     An interesting conjecture can be made by comparing SDE \eqref{FP1} in Example \ref{Ex FP} and SDE \eqref{SDE81}, 
     both for the particular $E$ discussed in the above paragraph which is assumed independent of $B$.  
     First, SDE \eqref{SDE81} is particularly different from SDE \eqref{FP1} due to the presence of the $dt$ term.  
     Second, Theorem 4.1 in Hahn, Kobayashi and Umarov~\cite{HKU} shows that the transition probability density of the solution to SDE 
     \eqref{FP1} satisfies PDE \eqref{FP2}, 
     and the proof is carried out by taking the expectation in the time-changed It\^o formula \eqref{ITO3}.  
     Consequently, \eqref{SDE87} suggests that if SDE \eqref{FP1} is replaced by an SDE having a term $\rho(X_t) dt$, 
     then the corresponding PDE may involve a fractional integral term.  \qed
\end{em}
\end{Ex}

     The following two examples clarify how to apply the reduction method obtained in Proposition \ref{Prop Reduction}.

\begin{Ex}\label{Ex LSDE-9}\par
\begin{em}
     Solution \eqref{SDE42} to the homogeneous linear SDE \eqref{SDE41} discussed in Example \ref{Ex LSDE-4} 
     can also be obtained by using the technique provided in Proposition \ref{Prop Reduction}. 
     In this case, the integrating factor is $U_t=\exp\bigl\{(\sigma^2/2-\mu)E_t-\sigma B_{E_t}\bigr\}$     
     and \eqref{SDE94} becomes the path-by-path ODE $dW_t=\rho W_tdt$ with $W_0=x_0$, 
     which has the solution $W_t=x_0\, e^{\rho t}$.  Hence, the relation $X_t=U_t^{-1}W_t$ immediately 
     yields the desired solution form \eqref{SDE42}. 
     More generally, the same reduction scheme proves Proposition \ref{Prop LSDE-2}.  \qed  
\end{em}
\end{Ex}

\begin{Ex}\label{Ex LSDE-10}\par
\begin{em}
     As another application of the reduction method introduced in Proposition \ref{Prop Reduction}, 
     consider a generalized population growth model 
     \begin{align}
          dX_t=q X_t (K-X_t) dt + \mu X_t dE_t + \sigma X_t dB_{E_t} \ \ \  \textrm{with} \ X_0=x_0 \label{SDE101}
     \end{align}
     where $q$, $K$, $x_0>0$ and $\mu$, $\sigma\in\bbR$.
     This model describes the growth of a population of size $X_t$ in some environment.  
     $q$ and $K$ represent the quality and the carrying capacity of the environment, respectively.  
     If the quality of life is good and the current population is less than the carrying capacity, i.e., if $q$ is large and $0<X_t<K$, 
     then the population will grow, i.e., the drift coefficient $q X_t(K-X_t)$ is positive.  On the other hand, 
     a population exceeding the capacity of the environment is expected to decrease even when the quality is good, 
     i.e., if $X_t>K$, then the drift $q X_t(K-X_t)$ is negative, regardless of the value of $q(>0)$.  
 
     Note that SDE \eqref{SDE101} possesses a distinct form of coefficients in $dt$ and $dE_t$ terms, 
     unlike Examples \ref{Ex LSDE-4}, \ref{Ex LSDE-6} and \ref{Ex LSDE-7}.   
     Hence, this model is constructed based on Viewpoint 2 of Remark \ref{Rems LSDE-solution} (b). 
     The presence of the term $\mu X_t dE_t$ implies that a certain factor 
     originating in the new clock affects the growth of the population,  
     and the effect is proportional to the current position $X_t$.   
     $\sigma$ describes the noise of the system as in the classical population growth model 
     (i.e., SDE \eqref{SDE101} with $E_t=t$ and $\mu=0$). 

     Theorem \ref{Thm LSDE-3} cannot be applied to the nonlinear SDE \eqref{SDE101}. 
     Instead, Proposition \ref{Prop Reduction} with $W_t=U_tX_t$ 
     where $U_t=\exp\bigl\{(\sigma^2/2-\mu)E_t-\sigma B_{E_t}\bigr\}$, yields the path-by-path ODE
     \begin{align}
          \frac{dW_t}{dt}=q W_t \bigl(K-U_t^{-1}W_t\bigr) \ \ \ \textrm{with} \ W_0=x_0. \label{SDE102}
     \end{align}
     Consider a Bernoulli-type ODE 
     \begin{align}
          y'(t)=f(t)y^2(t)+k y(t) \ \ \ \textrm{with} \ y(0)=x_0, \label{SDE103}
     \end{align}
     where $k$ is a real constant and the symbol $'$ denotes the derivative with respect to $t$.  
     By the substitution $z(t)=y^{-1}(t)$, the ODE \eqref{SDE103} reduces to 
     $z'(t)+kz(t)=-f(t)$ with $z(0)=x_0^{-1}$.  Multiplication of both sides by $e^{kt}$ leads to 
     $\bigl\{ e^{kt}z(t)\bigr\}'=-e^{kt}f(t)$, whose solution is
     \begin{align}
          e^{kt}z(t)-x_0^{-1}=-\int_0^t e^{ks}f(s)ds, \ \ \ \textrm{or} \ \ \ y(t)=\dfrac{e^{kt}}{x_0^{-1}-\int_0^t e^{ks}f(s)ds}. \label{SDE104}
     \end{align}
     By the substitutions, $y(t)=W_t$, $f(t)=-q U_t^{-1}$, $k=q K$ in \eqref{SDE104}, 
     \begin{align}
          X_t&=U_t^{-1}W_t
          = \dfrac{U_t^{-1}\cdot \exp \{q Kt\} } {x_0^{-1}+\int_0^t \exp \{q Ks\} \cdot q U_s^{-1} ds}\label{SDE105} \\
          &= \dfrac{\exp\bigl\{q Kt+(\mu- \frac 12 \sigma^2)E_t+\sigma B_{E_t} \bigr\}} 
                      {x_0^{-1}+q \int_0^t \exp\bigr\{q Ks+(\mu-\frac 12 \sigma^2)E_s+\sigma B_{E_s}\bigr\} ds}\;, \notag
     \end{align}  
     yielding the solution to the generalized population growth model \eqref{SDE101}. \qed 
\end{em}
\end{Ex}

\appendix

\section*{Appendix --- Construction of Stochastic Integrals}

     The aim of this appendix is to make explicit the class ${L(Z,{\cal F}_t)}$ of 
     $Z$-integrable predictable processes treated in this paper.      
     For details regarding the construction of stochastic integrals driven by a semimartingale, consult \cite[II--IV]{Protter}.  

     Throughout, a filtration $\cF$ satisfying the usual conditions is fixed.  
     Write $\bbD=\bbD\cF$ (c\`adl\`ag adapted processes), $\bbL=\bbL\cF$ (c\`agl\`ad adapted processes),
     and $\mathcal{P}=\mathcal{P}\cF$ (predictable processes).     
     Let $\bddL$ and $\bddP$ denote bounded processes in the specified class.  
     Let $\bbS$ be a subset of $\bbL$ consisting of all processes of the form 
     $H_t=H_0 \, \bI_{\{0\}}(t)+\sum_{i=1}^n H_i \, \bI_{(T_i,T_{i+1}]}(t)$, 
     where $n$ is a positive integer, $\{T_i\}_{i=1}^{n+1}$ is an increasing sequence of finite stopping times with $T_1=0$, and 
     each $H_i$ is an $\mathcal{F}_{T_i}$-measurable random variable.  
     
     First, endow $\bbD$, $\bbL$ and $\bbS$ with the topology induced by ``$H^{m}\longrightarrow H$ if and only if for each $t\geq 0$, 
     $\sup_{0\leq s\leq t}|H_s^m-H_s|\longrightarrow 0$ in probability as $m\to\infty$.''  Then $\bbS$ is a dense subspace of $\bbL$,
     and $\bbD$ becomes a complete metric space with a compatible metric 
     $d(Y,Z)\DEF \sum_{n=1}^\infty (1/2^n) \bbE\bigl[\min\bigl(1,\sup_{0\leq s\leq t}|Y_s-Z_s|\bigr)\bigr]$.  
     Given a semimartingale $Z$ starting at $0$, the stochastic integral of $H\in\bbS$ of the above form is defined to be 
     $H\bullet Z\DEF J_{Z}(H)\DEF \sum_{i=1}^n H_i(Z^{T_{i+1}}-Z^{T_i})$ where $Z_t^T\DEF Z_{\min(t, T)}$.  
     The continuous linear operator $J_Z:\bbS\longrightarrow \bbD$ uniquely extends to an operator defined on $\bbL$. 
     For the moment, denote $J_{Z}(H)$ as $[\bD 1\textrm{-} ]H\bullet Z$ for $H\in \bbL$.  
     Note that the quadratic variation of $Z$ is defined by \eqref{QUAD} via this integral operator.  

     The next step is to introduce the space $\HH$ of semimartingales starting at $0$ 
     with a unique decomposition $\widetilde{Z}=\widetilde{M}+\widetilde{A}$ 
     where $\widetilde{M}$ is a local martingale and $\widetilde{A}$ is a predictable process of finite variation such that 
     \begin{align*}
     \bigl\|\widetilde{Z}\bigr\|_{\HH}
     \DEF \bigl\|[\widetilde{M},\widetilde{M}]_\infty^{1/2}\bigr\|_{L^2}+\Bigl\|\int_0^\infty |d\widetilde{A}_s|\hspace{0.5mm}\Bigr\|_{L^2}<\infty.
     \end{align*}   
     The real vector space $\HH$ with the norm $\|\cdot \|_{\HH}$ forms a Banach space.  
     To extend a class of integrands, first fix an integrator $\widetilde{Z}=\widetilde{M}+\widetilde{A}\in \HH$ 
     and introduce a metric $d_{\widetilde{Z}}$ on $\bddP$ by 
     \begin{align*}
          d_{\widetilde{Z}}(H,K)\DEF \Bigl\| \Bigl\{\int_0^\infty (H_s-K_s)^2 d[\widetilde{M},\widetilde{M}]_s\Bigr\}^{1/2}\Big\|_{L^2}
                                        +\Big\|\int_0^\infty |H_s-K_s| |d\widetilde{A}_s|\hspace{0.5mm}\Big\|_{L^2}
     \end{align*}
     where $|d\widetilde{A}_s|$ denotes the integral with respect to the total variation measure.   
     The integrals appearing in this definition are understood path-by-path in the Lebesgue--Stieltjes sense, and it follows that
     $d_{\widetilde{Z}}(H,K)=\bigl\| H\bullet\widetilde{Z}-K\bullet\widetilde{Z}\bigr\|_{\HH}$.  
     Under this metric, $\bddL$ is dense in $\bddP$.  For $H\in \bddP$, it is easy to see that a unique $\HH$-limit 
     of the sequence $\{[\bD 1\textrm{-} ]H^n\bullet \widetilde{Z}\}$ exists where $\{H^n\}$ is an approximating sequence in $\bddL$ for $H$.  
     Moreover, the limit is determined independently of the choice of the approximating sequence.  
     Hence, the stochastic integral 
     $[\bD 2\textrm{-}]H\bullet  \widetilde{Z}\DEF 
     \HH\textrm{-}\lim_{n\to\infty} [\bD 1\textrm{-} ]H^n\bullet \widetilde{Z}$ is well-defined.  

     The third step requires another class of integrands, denoted $L_{\HH}(\widetilde{Z},{\cal F}_t)$, 
     which consists of predictable processes with 
     \begin{align*}
     \Bigl\| \Bigl\{\int_0^\infty H_s^2 d[\widetilde{M},\widetilde{M}]_s\Bigr\}^{1/2} \Bigr\|_{L^2}
     +\Bigl\|\int_0^\infty |H_s||d\widetilde{A}_s|\hspace{0.5mm}\Bigr\|_{L^2}<\infty,
     \end{align*} 
     where $\widetilde{Z}=\widetilde{M}+\widetilde{A}\in \HH$.  Associate to $H\in L_{\HH}(\widetilde{Z},{\cal F}_t)$, 
     the truncation processes $\{H^k\}$ in $\bddP$, given by $H^k\DEF H\, \bI_{\{|H|\leq k\}}$. 
     Again, via the same reasoning as above, the stochastic integral $[\bD 3\textrm{-} ]H\bullet \widetilde{Z}$ is defined to be 
     the unique $\HH$-limit of the sequence $\{[\bD 2\textrm{-} ]H^k\bullet \widetilde{Z}\}$.  That is,  
     $[\bD 3\textrm{-}]H\bullet \widetilde{Z}\DEF \HH\textrm{-}\lim_{n\to\infty} [\bD 2\textrm{-} ]H^n\bullet \widetilde{Z}$. 
    
     Finally, given a general semimartingale $Z$ starting at $0$, a predictable process $H$ 
     is said to be $Z$\textit{-integrable}, denoted $H\in {L(Z,{\cal F}_t)}$, 
     if there exists a sequence $\{\sigma^n\}$ of stopping times increasing to $\infty$ such that $\widetilde{Z}^n\DEF Z^{\sigma^n-}\in \HH$ 
     and $H\in L_{\HH}(\widetilde{Z}^n,{\cal F}_t)$ for each $n$, where $Z_t^{\sigma-}\DEF Z_t \, \bI_{[0,\sigma)}(t)+Z_{\sigma-} 
     \, \bI_{[\sigma,\infty)}(t)$.   
     With this sequence $\{\sigma^n\}$, the stochastic integral of $H$ driven by $Z$ is defined to be 
     $H\bullet Z\DEF [\bD 3\textrm{-} ]H\bullet \widetilde{Z}^n$ on $[0,\sigma^n)$.  
     This definition is consistent and independent of the choice of the localizing sequence $\{\sigma^n\}$.  

     One important special case is when $Z=M$ is a continuous $\cF$-local martingale.  
     In this case, $H\in L(M,\mathcal{F}_t)$ if and only if $H\in \mathcal{P}\cF$ 
     and $\bbP\bigl( \int_0^t H_s^2 d[M,M]_s< \infty \bigr)=1$ for all $t\geq 0$.  Moreover, the stochastic integral $H\bullet M$ 
     is also a continuous $\cF$-local martingale.  In particular, if $Z=B$ is a standard $\cF$-Brownian motion 
     and $E$ is a continuous $\cF$-time-change, 
     then it is easily shown that $(B_{E_t})$ is a continuous $\cG$-local martingale, where $\mathcal{G}_t\DEF \mathcal{F}_{E_t}$.  
     Thus, for any $K\in L(B\circ E,\mathcal{G}_t)$, the stochastic integral $K\bullet (B\circ E)$ 
     is also a continuous $\cG$-local martingale.  

\section*{Acknowledgements}
     I am grateful to Marjorie Hahn and Sabir Umarov 
     for their direction and assistance throughout this research.  
     I also wish to thank 
     Meredith Burr, Jamison Wolf, and Xinxin Jiang 
     for comments and suggestions.


\begin{thebibliography}{99}
{\small
\bibitem{Applebaum}
   \textsc{Applebaum, D.} (2004). \textit{L\'evy Processes and Stochastic Calculus}. Cambridge University Press.  
\bibitem{BKS} 
   \textsc{Bondesson, L.}, \textsc{Kristiansen, G. K.} and \textsc{Steutel, F. W.} (1996).
   Infinite divisibility of random variables and their integer parts. 
   \textit{Statist. Probab. Lett.} \textbf{28}, no. \textbf{3}, 271--278. 
\bibitem{Gard}
   \textsc{Gard, T. C.} (1988). \textit{Introduction to Stochastic Differential Equations}. Marcel Dekker, Inc. 
\bibitem{G-M}
    \textsc{Gorenflo, R.} and \textsc{Mainardi, F.} (1997). Fractional calculus: integral and
    differential equations of fractional order. \textit{Fractals and
    Fractional Calculus in Continuum Mechanics}. 
    Springer. 223--276. 
\bibitem{HKU}
   \textsc{Hahn, M. G.}, \textsc{Kobayashi, K.} and \textsc{Umarov, S.} (2010). 
   SDEs driven by a time-changed L\'evy process and their associated time-fractional order pseudo-differential equations.  \textit{J. Theor. Probab.} DOI: 10.1007/s10959-010-0289-4.
\bibitem{Jacod}
   \textsc{Jacod, J.} (1979). \textit{Calcul Stochastique et Probl\`emes de Martingales}. 
   Lecture Notes in Mathematics, {\bf 714}. Springer, Berlin. 
\bibitem{J-S}
   \textsc{Jacod, J.} and \textsc{Shiryaev, A. N.} (2003). 
   \textit{Limit Theorems for Stochastic Processes}, 2nd ed. 
   Springer-Verlag, Berlin. 
\bibitem{Kallsen-S}
   \textsc{Kallsen, J.} and \textsc{Shiryaev, A N.} (2002). Time change representation of stochastic integrals. 
   \textit{Theor. Prob. Appl.} {\bf 46}, 522--528.
\bibitem{Karatzas-S}
   \textsc{Karatzas, I.} and \textsc{Shreve, S. E.} (1998). \textit{Brownian Motion and Stochastic Calculus}, 2nd ed. 
   Graduate Texts in Mathematics, {\bf 113}.  Springer.   
\bibitem{MNV}
   \textsc{Meerschaert, M. M.}, \textsc{Nane, E.} and \textsc{Vellaisamy, P.} (2009). 
   Fractional Cauchy problems on bounded domains. 
   \textit{Ann. Probab.} {\bf 37}, no. {\bf 3}, 979--1007. 
\bibitem{M-S_1}
   \textsc{Meerschaert, M. M.} and \textsc{Scheffler, H-P.} (2004). 
   Limit theorems for continuous-time random walks with infinite mean waiting times. 
   \textit{J. Appl. Probab.} {\bf 41}, 623--638.   
\bibitem{M-S_2}
   \textsc{Meerschaert, M. M.} and \textsc{Scheffler, H-P.} (2008). Triangular array limits for continuous time random walks. 
   \textit{Stochastic Process. Appl.} {\bf 118}, 1606--1633.  
\bibitem{Oksendal}
   \textsc{{\O}ksendal, B.} (2003). \textit{Stochastic Differential Equations. An Introduction with Applications}, 6th ed. Universitext.
   Springer.  
\bibitem{Protter}
   \textsc{Protter, P.} (2004). \textit{Stochastic Integration and Differential Equations}, 2nd ed.  Springer.  
\bibitem{R-Y}
   \textsc{Revuz, D.} and \textsc{Yor, M.} (1999). \textit{Continuous Martingales and Brownian Motion}, 3rd ed. 
   Grundlehren der Mathematischen Wissenschaften, {\bf 293}. Springer.  
\bibitem{Sato}
    \textsc{Sato, K-i.} (1999).
    \textit{L\'evy Processes and Infinitely Divisible Distributions}.
    Cambridge University Press.  
\bibitem{Steele}
   \textsc{Steele, J M.} (2001). \textit{Stochastic Calculus and Financial Applications}. Applications of Mathematics (New York), {\bf 45}. 
   Springer.  
}
\end{thebibliography}
\end{document}